\documentclass[12pt]{article}
\usepackage{graphicx}
\usepackage{amsmath,amsthm,amssymb,amsfonts,enumerate}
\usepackage{euscript,mathrsfs}
\usepackage{xcolor}
\usepackage{empheq}
\usepackage[left=1.5cm,right=1.5cm,top=2cm,bottom=2cm]{geometry}
\usepackage{color}
\usepackage{enumitem}

\usepackage{stmaryrd}
\allowdisplaybreaks

\usepackage{epstopdf} % ----- figures in eps ------
\usepackage{xcolor} % --------for colourful comments
\usepackage{graphicx}
\usepackage [colorlinks=true, linkcolor=blue, citecolor=blue, urlcolor=blue]{hyperref}%gives clickable contents
%\usepackage{stmaryrd}  %  for \llbracket  \rrbracket
%\usepackage{authblk}

%  For subfigures and subtables.
\usepackage[labelformat=simple]{subcaption}

\captionsetup[subtable]{labelformat=simple}

\newcommand{\nc}{\newcommand}

\usepackage{tikz}
\usetikzlibrary{patterns}

\def\softd{{\leavevmode\setbox1=\hbox{d}%
          \hbox to 1.05\wd1{d\kern-0.4ex{\char039}\hss}}}%cstocs

\catcode`\@=11 \@addtoreset{equation}{section}

\catcode`\@=12

%%%% COMMANDS 

%\newcommand{\range}[1]{[ \! \!\vert #1 \vert \! \! ]}
\newcommand{\range}[2]{\{#1, \ldots, #2 \}}

\newcommand\TS{\Delta t}

\newcommand\dt{\,\mathrm{dt}}
\newcommand\dx{\,\mathrm{d}x}
\newcommand\dS{\,\mathrm{dS}(x)}

\newcommand\di{\mathrm{div}}
\newcommand\dix{\mathrm{div}_x}
\newcommand\dih{\mathrm{div}_\mesh}
\newcommand{\laph}{\Delta_h }
\newcommand\Up{\mathrm{Up}}
\newcommand\Upi{\mathrm{Up}^{(i)}}

\newcommand{\bUp}{\textbf{Up}}

\newcommand\divup{\di_\Up }

\newcommand\ha{h^\alpha}
\nc{\pdedge}{ \eth _\E }
\nc{\pdedgei}{ \eth _\E^{(i)} }
\nc{\pdedgej}{ \eth _\E^{(j)} }

\nc{\pd}{ \partial }
\nc{\pdmesh}{ \pd _\mesh }
\nc{\pdmeshi}{ \pd _\mesh ^{(i)} }
\nc{\pdmeshj}{ \pd _\mesh ^{(j)} }

\nc{\shkl}{\sum_{\sigma =K|L\in \faces}}

\nc{\Hc}{ \mathcal{H} }

\newcommand{\Xspace}{X_\mesh}
\newcommand{\Yspace}{ {\bf Y_\edges}}
\newcommand{\Yspacei}{Y_{i,\edges}}

\newcommand{\PiE}{ \Pi _ \E}
\newcommand{\PiEi}{ \Pi _ \E^{(i)}}

\newcommand{\Gradedge}{ \nabla _\E}

\newcommand{\Gradedged}{ \nabla _\epsilon}

\newcommand{\Grad}{ \nabla _x}

\newcommand{\Lapm}{ \Delta_{\mesh}}
\newcommand{\Lape}{ \Delta_{\edges}}
\newcommand{\Lapmi}{ \Delta_{\mesh}^{(i)}}

\nc{\mainspace}{X(\mathcal{T})}
\nc{\dualspace}{Y(\mathcal{E})}
\nc{\norm}[1]{\lVert#1\rVert}

\nc{\co}[1]{\text{co}\{#1\} }

\nc{\up}{\rm up}
\nc{\nup}{n,{\rm up}}
\nc{\udn}{\vu_h^n \cdot \bfn_{\sigma,K}}
\nc{\Udn}{\vU_h^n \cdot \bfn_{\sigma,K}}
\nc{\Udnup}{ \avU_h^{\nup} \cdot \bfn_{\sigma,K}}

\newcommand{\avs}[1]{\left\{\!\!\left\{ #1\right\}\!\!\right\}}
\newcommand{\avsi}[1]{\left\{\!\!\left\{ #1\right\}\!\!\right\}^{(i)}}

\newcommand{\ve}{\mathbf{e}}
\newcommand{\vei}{\ve_i}

\newcommand\vr{\varrho}
\newcommand\vn{\mathbf{n}}
\newcommand\vu{\mathbf{u}}
\newcommand\uu{\vu}

\newcommand\vv{\mathbf{v}}

\newcommand\vx{\mathbf{x}}

\newcommand\vrh{\vr_h}
\newcommand\vuh{\vu_h}
\newcommand\vvh{\vv_h}

\newcommand\vUh{\vU_h}

\newcommand\sumi{\sum_{i=1}^d}
\newcommand\sumj{\sum_{j=1}^d}

% zkratky
\nc{\bS}{ \mathbb{S}}
\newcommand{\bI}{\mathbb{I}}

\newcommand\mesh{\mathcal{T}}
\newcommand\faces{\mathcal{E}}
\newcommand\edges{\faces}
\newcommand\edgesK{\faces(K)}
\newcommand\edgesi{\faces_i}
\newcommand\edgesiK{\faces_i(K)}

\newcommand\edge{\sigma}

\newcommand\facei{\mathcal{E}_i}

\newcommand\edgesint{\faces}

\nc{\E}{\mathcal{E}}
\nc{\Di}{\mathcal{D}_i}

\newcommand{\neighdual}{ \mathcal{N}^\star }

\newcommand\sumfaceK{\sum_{\sigma \in \edgesK}}
\newcommand\sumneigh{\sum_{L \in \mathcal{N}(K)}}
\newcommand\sumK{\sum_{K \in \mesh}}
\newcommand\sumfaceint{\sum_{\sigma \in \faces}}

\newcommand{\stiei}{\sum_{\sigma \in \edgesi }}

\nc{\intDsi}[1]{ \stiei \int_{D_\sigma}#1\dx}
\nc{\inttime}{ \int_0^T}

  %% d=3
\newcommand\aleq{\lesssim}

\newcommand{\vih}{ v_{i,h} }
\newcommand{\uih}{ u_{i,h} }
\newcommand{\Uih}{ U_{i,h} }
\newcommand{\avih}{ \Ov{v}_{i,h} }
\newcommand{\auih}{ \Ov{u}_{i,h} }
\newcommand{\aujh}{ \Ov{u}_{j,h} }

\nc{\gradu}{\nabla \vu}
\newcommand{\myangle}[1]{\langle#1\rangle}
\newcommand{\DC}{C^\infty_c}

%%%%%%%%%%%%%%%%%%%%%%

\nc{\bn}{\vn}
\nc{\bfr}{\mathbf{r}}
\nc{\bfn}{\vn}
\nc{\bfu}{\vu}
\nc{\bfv}{\vv}
\nc{\bfx}{\vx}
\nc{\avu}{\overline{\vu}}
\nc{\vU}{\mathbf{U}}

\nc{\avU}{\overline{\mathbf{U}}}

%%%% Hana
\newcommand{\Nu}{\mathcal V}
\newcommand{\sigmap}{\sigma _{K\text{,} i +}}
\newcommand{\sigmam}{\sigma _{K\text{,} i -}}
%%%%

%%%%%%%%%%%%%%%%%%%%%%%%%%%%%%%%%%%%%%%%%%%%%%%%%%%%%%%%%%%%%
\nc{\abs}[1]{\left\lvert#1 \right\rvert}
\nc{\jump}[1]{\left\llbracket#1\right\rrbracket}

\nc{\pdt}{D_t}
\nc{\Pim}{\Pi_{\mesh}}
\nc{\Pie}{\Pi_{\edges}}

\newcommand{\Ov}[1]{\overline{#1}}
\newcommand{\bfphi}{\boldsymbol{\phi}}
\newcommand{\bfPhi}{\boldsymbol{\Phi}}
\nc{\intOh}{\int_{\Omega}}
\nc{\intO}[1]{\int_{\Omega} #1 \dx} 
\nc{\intS}[1]{\int_{\sigma} #1 \dS}
\nc{\intSh}[1]{\int_{\sigma} #1 \dS}
\nc{\intK}[1]{\int_{K} #1 \dx} 

\nc{\divx}{\div_x}

\nc{\ceil}[1]{\lceil#1\rceil}
\nc{\floor}[1]{\lfloor#1\rfloor}

\newtheorem{thm}{Theorem}[section]
\newtheorem{lemma}[thm]{Lemma}
\newtheorem{coro}[thm]{Corollary}
\newtheorem{defi}[thm]{Definition}
\newtheorem{prop}[thm]{Proposition}
\newtheorem{remark}{Remark}
%%%%%%%%%%%%%%%%%%%%%%%%%%%%%%%%%%%%%%%%%%%%%%%%%%%%%%%%%%%%%%%%%%
\usepackage[normalem]{ulem}
\usepackage{cancel}
\nc{\mycolor}{\color{magenta}}
\nc{\rdele}{\color{brown}\sout}
\nc{\cblue}[1]{\textcolor{blue}{#1}}
\nc{\cred}{\color{red}}
\nc{\cmag}{\color{magenta}}
\nc{\cbrown}{\color{brown}}
\nc{\mydele}[1]{\textcolor{brown}{\sout {#1}}}

\title{Convergence and error estimates for a finite difference scheme for the multi-dimensional compressible Navier--Stokes system}

%\author{Hana Mizerov\'a \thanks{The Institute of Mathematics of the Academy of Sciences of the Czech Republic is supported by RVO:67985840. } $^{,}$\thanks{The research of H.M. leading to the results has received funding from the Czech Sciences Foundation (GA\v CR), Grant Agreement 18--05974S.  }
%\and Bangwei She $^{*,}$\thanks{The research of B.S. leading to the results has received funding from the Czech Sciences Foundation (GA\v CR), Grant Agreement 19--11707Y and  Primus Research Programm of Charles University, Grant No. PRIMUS/19/SCI/01. 
%}
%}
\author{Hana Mizerov\'a \thanks{Both authors thank Czech Sciences Foundation (GA\v CR), Grant Agreement 18--05974S for supporting the research. 
The Institute of Mathematics of the Czech Academy of Sciences is supported by RVO:67985840. } $^{,\dagger}$
\and Bangwei She $^{*, \ddagger}$
}
\date{}

\begin{document}

\maketitle

\centerline{$^*$ Institute of Mathematics of the Czech Academy of Sciences}
\centerline{\v Zitn\' a 25, CZ-115 67 Praha 1, Czech Republic}
\centerline{ \{mizerova, she\}@math.cas.cz}

\bigskip
\centerline{$^\dagger$ Department of Mathematical Analysis and Numerical Mathematics}
\centerline{Faculty of Mathematics, Physics and Informatics of the Comenius University}
\centerline{Mlynsk\' a dolina, 842 48 Bratislava, Slovakia} 

\bigskip
\centerline{$^\ddagger$ Faculty of Mathematics and  Physics of the Charles University}
\centerline{Sokolovsk\' a 83, 186 75 Praha 8, Czech Republic} 

\bigskip

\begin{abstract} 
We prove  convergence of a finite difference approximation of the compressible Navier--Stokes system  towards the strong solution in  $R^d,$ $d=2,3,$ for the adiabatic coefficient $\gamma>1$. 
Employing the relative energy functional, we find a convergence rate  which is \emph{uniform} in terms of the discretization parameters for $\gamma \geq d/2$.
All results are \emph{unconditional} in the sense that we have no assumptions on the regularity nor boundedness of the numerical solution. 
We also provide numerical experiments to validate the theoretical convergence rate. 
To the best of our knowledge this work contains the first unconditional result on the convergence of a finite difference scheme for the unsteady compressible Navier--Stokes system in multiple dimensions.  

\end{abstract}

{\bf Key words:} compressible Navier--Stokes system; finite difference method; convergence; weak--strong uniqueness;  error estimates; relative energy functional

\tableofcontents 

\section{Introduction}\label{sec:1}
 We study the viscous compressible fluid flow problem described by the Navier--Stokes system
\begin{subequations}\label{strong_ns}
\begin{align}
\partial_t \vr + \dix(\vr\vu) &= 0,\label{strong_CON} \\
\partial_t (\vr \vu) + \dix(\vr \vu \otimes \vu) + \Grad  p(\vr) &= \dix \mathbb{S} ,  \label{strong_MOM}
\end{align}
\end{subequations}
in the time--space cylinder $[0,T] \times \Omega $, $\Omega \subset R^d, d=2,3$, where $\vr$ is the density, $\vu$ is the velocity field, and
$\bS$ is the viscous stress tensor given by
\[\bS = \mu (\Grad \vu + \Grad^T \vu) + \lambda \dix \vu \bI,\; \mu >0, \; \mu+\lambda\geq 0. \]
The pressure is assumed to satisfy the {\em isentropic} law 
\begin{equation}\label{assumption_p}
p=a \vr^\gamma, \; a>0, \; \gamma>1.
\end{equation}
%that satisfies
%\[
%p \in C^2(0,\infty) \cap C^1[0,\infty),\ p(0)=0,\ p'(\rho) >0,  \text{ for all } \rho >0.
%% \\ 
%%\lim\limits_{\rho \rightarrow\infty} \frac{p'(\rho)}{ \rho^{\gamma-1}} =p_{\infty} >0,\ \gamma > 1, \quad
%%\inf\limits_{(0,1)} \frac{p'(\rho)}{ \rho^{\alpha}} =p_0 >0,  \ \alpha \leq 1.
%\]
The system \eqref{strong_ns} is complemented with the space--periodic boundary conditions and  initial conditions
\begin{equation}\label{initial_assumption}
\vr(0,\vx) = \vr_0 > 0, \ \vr_0 \in L^\gamma(\Omega)\cup L^2(\Omega), \qquad \vu(0,\vx) = \vu_0, \ \vu_0 \in W^{1,2}(\Omega;R^d).
\end{equation}

The global existence of weak solutions to the Navier--Stokes system \eqref{strong_ns} was proven in \cite{FNP, Lions}. We would like to point out that the existence results require $\gamma > d/2$ which excludes the diatomic gas for $\gamma=1.4$. 
Concerning the existence of strong solutions, we refer to the result of Valli \cite{valli} for sufficiently smooth initial data, %for a possibly small time interval $[0,T^*)$
see also more recent results in \cite{Cho, Sun}. 

Despite various efficient numerical schemes in literature~\cite{Cockburn, DoFei, GGHL, HJL}, the numerical convergence analysis is open in general.  To our best knowledge, there are only two unconditional convergent schemes ready for use for the multi-dimensional Navier--Stokes system \eqref{strong_ns}.  
The pioneering work was done by Karper \cite{Karper}, where a combined finite element--finite volume method was shown to converge to a suitable weak solution under the assumption of $\gamma>3$. Later, this constraint has been relaxed to $\gamma \in (\frac65,2)$ by Feireisl and Luk\' a\v cov\' a-Medvid'ov\' a~\cite{FL}, who indeed showed the convergence of Karper's scheme to a strong solution via a powerful tool -- the weak--strong uniqueness principle in the class of the dissipative measure--valued (DMV) solutions.  
Very recently, we have extended this idea to the convergence proof for a finite volume method for $\gamma \in (1,2)$, see \cite{FLMS_FVNS}. 

We would also like to mention the error estimates results by Gallou\"et et al.~\cite{GallouetMAC}, Liu~\cite{Liu1, Liu2} and Jovanovi\'{c}~\cite{Jovanovic} with assumptions either on the boundedness of the numerical solution or higher regularity of the smooth solution, whose existence hasn't been proved. The current paper shares some similarities with the result of~\cite{GallouetMAC} in the sense that we both work with staggered grid, upwind flux and error estimates of the numerical solutions. 
Nevertheless, the differences are obvious. First our numerical scheme is different compared to the reference method~\cite{GallouetMAC}. We use different mass lumping and our upwind flux is easier to implement. 
Moreover, our numerical scheme includes an additional artificial diffusion term which helps us to prove the unconditional convergence of the numerical solution. 
Further, we do not need any assumptions on the asymptotic behaviour of the pressure while the referefence~\cite{GallouetMAC} does.

The first aim of this paper is to show the convergence of a finite difference approximation towards a strong solution of the system \eqref{strong_ns} for any $\gamma >1.$  Since there is no result on the existence of strong solutions to \eqref{strong_ns} on a polygonal domain, we decided to analyze here the space--periodic setting, i.e. the case of identifying the domain  with a flat torus, $\Omega = ([0,1]|_{0,1})^d,\ d=2,3.$
The main tool we employ is the DMV solution pioneered in \cite{FL}. Though it has been successfully applied to the convergence analysis of finite volume schemes for the compressible Euler and the Navier--Stokes(--Fourier) equations in our recent works \cite{FLM18,FLMS_FVNS,FLMS_FVNSF}, it is still a
non-trivial task
%challenging 
to apply it to the convergence analysis of other schemes and for a wider range of the adiabatic coefficient $\gamma$. The proof of convergence consists of two main steps:
\begin{itemize}[noitemsep,topsep=0pt,leftmargin=15pt]
\item deriving suitable stability estimates and consistency formulation to show that a sequence of numerical solutions generates a DMV solution of the limit system; 
\item employing the DMV weak--strong uniqueness principle to conclude the convergence of numerical solutions to  a strong solution of the limit system on the life span of the latter.
\end{itemize}
Let us emphasize that we do \emph{not} assume any boundedness nor additional regularity of approximate solutions other than those provided by the numerical scheme itself which makes our convergence result \emph{unconditional.} We also want to emphasize that the limit strong solution has been shown to exist for at least a short time interval, see~\cite{Cho}. 

The second aim is to investigate the error between the finite difference approximation of the compressible Navier--Stokes system \eqref{strong_ns} and the strong solution of the latter. Here we have to assume the same as \cite{GallouetMAC}, i.e., that there exists a strong regular solution in $C^2$ class.  We set a target of deriving  \emph{uniform} convergence rate in terms of the discretization parameters $\TS$ and $h$ for any $\gamma \geq d/2.$ 
The main tool we use to reach this goal is the discrete counterpart of the relative energy functional studied in \cite{FJN}, which reads 
\[
\mathfrak{E}(\vr, \vu| r, \vU) = \intO{\left( \frac12 \vr \abs{\vu- \vU}^2 + \mathbb{E}(\vr|r)  \right)},   \mbox{ with } \mathbb{E}(\vr|r)= \Hc(\vr) - \Hc'(r) (\vr -r ) -\Hc(r), 
\]
%where $(\vr, \vu)$ and $(r, \vU)$ stand for two different solutions. Further, 
%Here $\Hc$ represents the pressure potential of the fluid,
\[ \text{ and }\; 
\Hc(\vr) = \vr \int_1^{\vr} \frac{p(z)}{z^2} dz 
\;\text{ satisfying }\;  \vr \Hc'(\vr) -\Hc(\vr) =p(\vr), \; \Hc''(\vr) = \frac{p'(\vr)}{\vr}. 
\]
The relative energy functioal was designed for the analysis of distance between a suitable weak solution and the strong solution. Recently, this idea has also been used for the error analysis of numerical schemes, the distance between a numerical solution and a strong solution, see~\cite{GallouetMAC, Gallouet_mixed}. 
More precisely, we show the error estimates and the appropriate convergence rate following these four steps:
\begin{itemize}[noitemsep,topsep=0pt,leftmargin=15pt]
\item derive the discrete relative energy inequality which is inherent of the proposed numerical scheme;
\item approximate the discrete relative energy inequality with particularly chosen discrete test functions and suitably rewrite it into terms to be compared with the identity satisfied  by the strong solution;
\item show the identity (inequality) satisfied by the strong solution, the so-called consistency error;
\item apply Gronwall's inequality on the combination of the above two inequalities.
\end{itemize}

To the best of our knowledge this work contains  the first \emph{unconditional} convergence result for a finite difference approximation of the compressible Navier--Stokes system in multiple dimensions equipped with  \emph{uniform} convergence rate.  Despite the methodologies being already 
used in related works, the presented proofs remain highly non-trivial. Convergence analysis of the proposed numerical scheme requires elaborate treatment and technical estimates linked to the staggered grid and piecewise constant approximation of the discrete operators. % constructed on piecewise constant function spaces. 

The rest of the paper is organized as follows: in Section~\ref{sec:method} we introduce the numerical scheme,  necessary preliminaries, and the main results, i.e.  \emph{unconditional} convergence of the numerical solution and  \emph{uniform} convergence rate. In the next section we recall the energy stability and present the consequent uniform bounds, from which we prove the consistency formulation of the scheme.  Further, we prove the convergence of numerical solutions towards strong solution by employing the concept of DMV--strong uniqueness principle.
In Section 4,  we prove another main result on the error estimates. Section~5 is devoted to numerical experiments to support the theoretical results.
Concluding remarks come in the end.

\section{The numerical method and main results}\label{sec:method}

We start  by introducing the notations. % and  some preliminaries before introducing  the finite difference MAC scheme, which was originally proposed in the work of Ho\v sek and She \cite{HS_MAC}.  %We approximate all unknowns of  the system \eqref{strong_ns} by  piecewise constant functions  in the time--space domain. 
We shall frequently use the notation $A \aleq B$ if $A \leq cB$, where $c >0$ is a constant that does \emph{not} depend on the discretization parameters $\TS$ and $h$. Moreover, $A 
\approx B$ means $A \aleq B$ and $B \aleq A$.
 We further write $c\in \co{a,b}$ if $\min(a,b)\leq c\leq\max(a,b)$.
In addition,  $\norm{\cdot}_{L^p}$ stands for $\norm{\cdot}_{L^p(\Omega)}$ and $\norm{\cdot}_{L^pL^q}$ stands for  $\norm{\cdot}_{L^p(0,T;L^q(\Omega))}$, since the time--space domain is fixed and this short notation shall bring no confusion. Finally, by $|\cdot|_{\rm max}$ we denote the maximum norm for continuous functions.

\subsection{Time discretization}\label{ss:time}
We divide the time interval $[0,T]$ into $N_t$ equidistant parts by a fixed time increment $\Delta t$ ($ =T/ N_t$). By $f_h^n$ we denote the value of a function $f_h$ at time  $t^n$  for $n\in \range{0}{N_t}$, where $h$ is the mesh parameter, see Section~\ref{sec_Space}. 
Then we use the backward Euler method to discretize the time derivative of a discrete function $f_h,$
\begin{equation*}
D_t f_h^n = \frac{ f_h^n - f_h^{n-1}}{\TS} \ \text{ for }
 n=1,2,\dots, N_t.
\end{equation*}
To prove the convergence of numerical solutions we will send the discrete parameters $h\approx\Delta t$ to zero and  investigate the weak limit of sequences of approximate functions in the $L^p$-setting ($p\geq 1$).  For this purpose we interpret quantities defined at the discrete points $t^n$ as piecewise constant functions with respect to the discretization of the time interval, 
%\[\vr_h(t,\cdot) = \vr_h^0 \text{ for } t<\TS,\quad  \vr_h(t)=\vr_h^n \text{ for } t\in [n\TS,(n+1)\TS), \; n\in\range{1}{N_t},\] 
%\[\vu_h(t,\cdot) = \vu_h^0 \text{ for } t<\TS,\quad \vu_h(t)=\vu_h^n \text{ for } t\in [n\TS,(n+1)\TS), \; n\in\range{1}{N_t},\] 
%\[\vr_h(t,\cdot) = \vr_h^0 \text{ for } t<\TS,\quad  \vr_h(t)=\vr_h^n \text{ for } t\in [n\TS,(n+1)\TS), \; n\in\range{1}{N_t},\] 
%and $p_h=p(\vr_h)$.
\begin{equation}
f_h(t,\cdot) = f_h^0 \ \text{ for }\ t<\TS,\quad f_h(t)=f_h^n \ \text{ for }\ t\in [n\TS,(n+1)\TS), \; n\in\range{1}{N_t}.
\end{equation}
Consequently, we may write  
\begin{equation*}
D_t f_h = \frac{ f_h(t,\cdot) - f_h(t-\TS,\cdot)}{\TS}.
\end{equation*}
Note that we shall work with the approximations $\vrh,$ $\vuh$ and $p_h=p(\vrh)$ of the density, velocity and pressure, respectively.

\subsection{Space discretization}\label{sec_Space}

%\begin{defi}[MAC mesh]
%\subsubsection{Primary and dual grids}
In order to introduce the finite difference MAC scheme we define the mesh and some  discrete operators.  

\subsubsection{Mesh}
\noindent{\bf \emph{Primary grid}}\quad The domain $\Omega$ is divided into 
compact  uniform quadrilaterals
\[\Omega =\bigcup_{K\in \mesh} K,\]
where the set of all elements, namely $\mesh$, forms the primary grid.  We denote by $\mathcal{E}(K)$ the set of all faces of an element $K,$ and by  $\edges$ the set of all faces of the primary grid $\mesh$.  
%; by $\edgesext$ the set of all faces on the boundary; by $\mathcal{E}(K)$ the set of faces of element $K$, and define $\edgesint =\edges \backslash \edgesext$
Further, we define 
\begin{align*}
\facei=\{\sigma \in \faces \; |  \; \sigma \text{ is orthogonal to } \vei  \}, \quad \edgesiK = \edgesK \cap \facei
\end{align*}
for any $i \in\range{1}{d}.$ Here $\vei$ stands for the unit basis vector of the canonical coordinate system.
We denote by $\bfx_K$ and $\bfx_\sigma$ the mass centers of an element $K \in \mesh$ and a face $\sigma \in \edges,$ respectively. 
By $h$ we denote the uniform size of the grid, meaning $|\bfx_K-\bfx_L|= h$ for any neighbouring elements $K$ and $L$. 
%For any $K\in \mesh$ and $\sigma \in \E$, we have their Lebesque measure $|K|=h^d$ and $|\sigma| = h^{d-1}$. 
To distinguish the exact position of a face $\sigma \in \edgesK$  we may also use the notation $\sigma _{K\text{,} i \pm}$ if 
\begin{equation*}
\bfx_\sigma  =\bfx_ K \pm \frac{h}{2}\vei,\;  i \in \range{1}{d}.
\end{equation*} 
Further, let $\mathcal{N}(K)$ denote the set of all neighbouring elements of $K \in \mesh.$
For any $\sigma \in \faces$ adjacent to the element $K$ and its neighbour $L\in\mathcal{N}(K)$, we write $\sigma = K|L.$ 
We  denote $\sigma = \overrightarrow{K|L}$ if moreover $\bfx_L - \bfx_K = h\vei.$ Further $\vn_{\sigma, K}$ denotes the outer normal vector to a face $\sigma \in \edgesK.$ 
% or $(h-b_i)\vei$, where $b_i$ is the length of the domain in the $i^{th}$ canonical direction.}}

%For instance, in the three-dimensional case, a generic cubic element $K$ is given by 
%\begin{equation*}
%K = \big((i-1)h;i h \big) \times \big((j-1)h;j h \big) \times \big((k-1)h;k h \big),\, \forall i,j,k \in \range{1}{N_x},
%\end{equation*}
%and its center of mass by $\bfx_K= (i-1/2, j-1/2,k-1/2)h.$ Further, the center of mass of, e.g., the face $\sigma_{K\text{,} 1+} \in \edgesK$ reads $\bfx_\sigma=(i, j-1/2,k-1/2)h.$ 

\medskip
\noindent {\bf \emph{Dual grid}}\quad Each face  $\sigma=K|L$ is associated to the  dual cell $D_\sigma =D_{\sigma,K} \cup D_{\sigma,L}$ which  is defined as the union of two half--cells, $D_{\sigma,K}$ and $D_{\sigma,L},$ adjacent to the face $\sigma=K|L,$ see Figure~\ref{fig-grid}. 
We set $\Di =\left\{D_\sigma  \; |  \; \sigma \in \edgesi\right\}$, $i \in \range{1}{d}.$ 
Note that for each fixed $i \in \range{1}{d}$ it holds 
\begin{equation*}\label{dualgrid}
\Omega=\bigcup_{\sigma\in {\E_i}} D_{\sigma}, \;\; {\rm int}(D_\sigma)\cap{\rm int}( D_{\sigma'})=\emptyset,\mbox{ for } \sigma,\sigma'\in {\E_i},\,\sigma\neq\sigma'.
\end{equation*}
Let  $\mathcal{N}^\star(\sigma)$ denote the set of all faces whose associated dual elements are the neighbours of $D_\sigma,$ i.e., 
\[ \neighdual(\sigma)=\{\sigma'\ |\  D_{\sigma'} \mbox{ is a neighbour of } D_\sigma\}.
\]

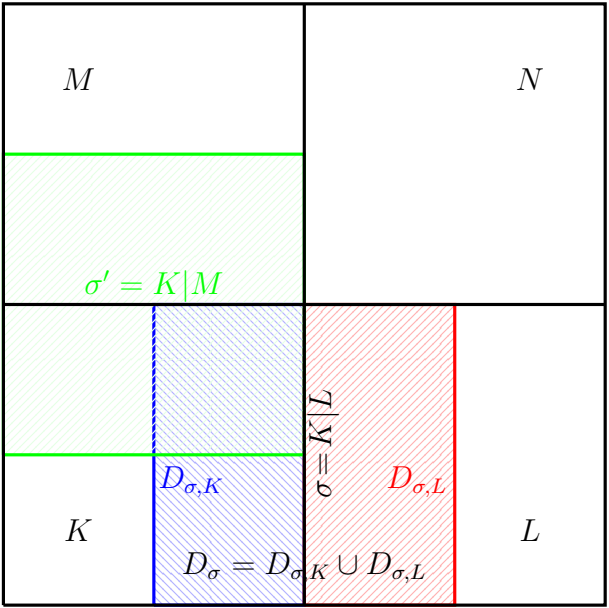
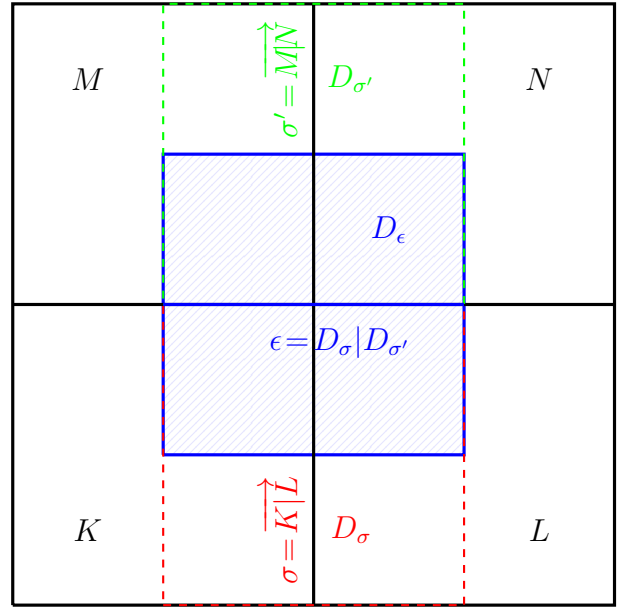
\begin{figure}[hbt]
\centering
\begin{subfigure}{0.45\textwidth}
\centering
\begin{tikzpicture}[scale=1]
\draw[-,very thick,blue=90!, pattern=north west lines, pattern color=blue!30] (2,0)--(4,0)--(4,4)--(2,4)--(2,0);
\draw[-,very thick,red=90!, pattern=north east lines, pattern color=red!30] (4,0)--(6,0)--(6,4)--(4,4)--(4,0);
\draw[-,very thick,green=10!, pattern=north east lines, pattern color=green!10] (0,2)--(4,2)--(4,6)--(0,6)--(0,2);
%% logos
\path (1,1) node[] { $K$};
\path (7,1) node[] { $L$};
\path (4.25,2.15) node[rotate=90] { $\sigma\!=\!K|L$};
\path (2.5,1.64) node[] { \textcolor{blue}{$D_{\sigma,K}$}} ;
\path (5.5,1.64) node[] { \textcolor{red}{$D_{\sigma,L}$}};
\path (4,0.5) node[] { $D_\sigma=  D_{\sigma,K} \cup D_{\sigma,L}$};
\path (2,4.25) node[] { \textcolor{green}{$\sigma'=K|M$}};
\path (1.,7) node[] { $M$};
\path (7,7) node[] { $N$};%
 %% geomety
\draw[-,very thick](0,0)--(8,0)--(8,8)--(0,8)--(0,0);
\draw[-,very thick](4,0)--(4,8);%
\draw[-,very thick](0,4)--(8,4);%  
\end{tikzpicture}\caption{Dual grid in two dimensions}\label{fig-grid}
\end{subfigure}
\hfill
\begin{subfigure}{0.45\textwidth}
\centering
\begin{tikzpicture}[scale=1]
%% dual mesh filled with color
\draw[-,very thick,blue=10!, pattern=north east lines, pattern color=blue!10] (2,2)--(6,2)--(6,6)--(2,6)--(2,2);
%% logos
\path (1,1) node[] { $K$};
\path (7,1) node[] { $L$};
\path (5,5) node[] { \textcolor{blue}{$D_\epsilon$}};
\path (4.5,7) node[] { \textcolor{green}{$D_{\sigma'}$}} ;
\path (4.5,1) node[] { \textcolor{red}{$D_\sigma$}};
\path (3.6,7) node[rotate=90]{ \textcolor{green}{$\sigma'\!=\!\overrightarrow{M\!|\!N}$}};
\path (3.6,1) node[rotate=90] { \textcolor{red}{$\sigma\!=\! \overrightarrow{K|L}$}};
\path (1.,7) node[] { $M$};
\path (7,7) node[] { $N$};%
 %% geomety
\draw[-,very thick](0,0)--(8,0)--(8,8)--(0,8)--(0,0);
\draw[-,very thick](4,0)--(4,8);%
\draw[-,very thick](0,4)--(8,4);%  
\draw[dashed, thick,green](2,4)--(6,4)--(6,8)--(2,8)--(2,4);%  
\draw[dashed, thick,red](2,0)--(6,0)--(6,4)--(2,4)--(2,0);%  
\draw[-,very thick,blue](2,4)--(6,4);%  
\path (4.35,3.5) node[] { \textcolor{blue}{$\epsilon\!=\! D_\sigma | D_{\sigma'}$} };
\end{tikzpicture}\caption{Bidual grid in two dimensions}\label{bidual-grid}
\end{subfigure}
\caption{MAC grid in two dimensions}\label{MAC-grid}
\end{figure}

\paragraph{\em Bidual grid}
In order to perform a discrete analogue of integration by parts we need to define a suitable discrete gradient operator for the velocity. For this purpose, we introduce the dual face and the bidual grid as in \cite[Definition 2.1]{GallouetMAC}. 
Let $\widetilde{\E}(D_\sigma)$ denote the set of all faces of a dual cell $D_\sigma.$ A generic dual face and its mass center are denoted by $\epsilon \in \widetilde{\E}(D_\sigma)$ and  $\bfx_{\epsilon}$, respectively.  
For any $\epsilon$ which separates the dual cells $D_\sigma$ and $D_{\sigma'}$, we write 
%$\epsilon = D_\sigma | D_{\sigma'}$, and also write 
$\epsilon = \overrightarrow{ D_\sigma | D_{\sigma'} }$ if $\bfx_{\sigma'} - \bfx_{\sigma} = h \vei $ %or $(h -b_i) \vei$ 
for  $i \in \{ 1,\ldots, d\}$.  
Similarly  to the definition of the dual cell, a bidual cell $D_\epsilon$ associated to $\epsilon = D_\sigma | D_{\sigma'}$ is defined as the union of adjacent halves of 
$D_\sigma$ and $D_{\sigma'}$, see Figure~\ref{bidual-grid}. Finally, let $\widetilde{\E}$ be the set of all faces of the bidual grid, and 
$\widetilde{\E}_i$ be the set of all faces of the bidual grid that are orthogonal to $\vei$.

\subsubsection{Function spaces}\label{Fspace}
We work with the staggered grid. On one hand, we approximate the discrete density and pressure on the center of each element $K\in \mesh$ by $\vr_K$ and $p_K$, respectively.  On the other hand, we approximate the discrete velocity on the center of each edge $\sigma\in \edgesi$ by $v_{i,\sigma}$ for all  $i\in \range{1}{d}.$ 
For the purpose of analysis, it is more convenient to extend these quantities to functions defined in the domain $\Omega$. Thus, we define  
\begin{equation}\label{PCE}
\vrh(\vx) = \sum_{K \in \mesh} \vr_K 1_{K},\quad p_h(\vx) = \sum_{K \in \mesh} p_K 1_{K}, \quad \uih(\vx) = \sum_{\sigma \in \edgesi} u_{i,\sigma} 1_{D_\sigma},\quad \forall \; \vx\in\Omega, 
%\quad \mbox{and} \quad \int_\Omega r_h \dx = \sumK  |K| r_K .
\end{equation} 
%\[
%\uih(\vx) = \sum_{\sigma \in \edgesi} u_{i,\sigma} 1_{D_\sigma},\; \forall \;  \vx\in\Omega, 
%%\vih\left(\vx \pm\frac{h}{2} \vei\right) = v_{i,\sigma_{K\text{,} i \pm}}, \; \vx \in  D_\sigma, 
%\]
and introduce the following piecewise constant function spaces
\begin{equation*}
\begin{aligned}
\Xspace &=  \left\{  \phi \mid  \phi_h|_K = \text{ constant } \forall \; K\in \mesh \right\},\\
\Yspace & =\left(Y_{1,\edges},\ldots Y_{d,\edges} \right),  \quad \Yspacei =  \left\{  \phi \mid  \phi_h|_{D_\sigma} = \text{ constant } \forall \; \sigma \in \edgesi  \right\}, \; i\in \range{1}{d}.
\end{aligned}
\end{equation*}
where $1_K$ and $1_{D_\sigma}$  are the characteristic functions. 
Clearly, $\vrh, p_h \in \Xspace$ and $\vuh \in \Yspace$ and the following identities hold
\[ \int_\Omega r_h \dx = \sumK  |K| r_K, 
\quad \mbox{and} \quad  \int_\Omega \vuh \dx  = \sum_{i=1}^d \sum_{\sigma\in\edgesi} |D_\sigma|   u_{i,\sigma} \vei
\]
for $r_h$  being $\vrh$ or $p_h$.

\subsubsection*{Projection to the primary and dual grids}
We define the \emph{projection operators}  to the primary and the dual grid by
\begin{align*}
\Pim:& L^1(\Omega) \to \Xspace, &\  
\Pim \phi= \sum_{K\in \mesh} (\Pim \phi)_K  1_{K}, 
\qquad &\ (\Pim \phi)_K =  \frac{1}{|K|} \int_{K} \phi \dx, \\
\PiEi:& W^{1,1}(\Omega) \to \Yspacei, &\ 
\PiEi \phi = \sum_{\sigma \in \E}  (\PiEi \phi)_{\sigma} 1_{D_\sigma}, 
\qquad &\ (\PiEi \phi)_{\sigma} =   \frac{ 1}{|\sigma|} \intSh{ \phi }, 
\end{align*} 
respectively. Further, for any $\bfphi=(\phi_1, \ldots, \phi_d)$ we denote $\PiE \bfphi= \left(\Pie^{(1)}\phi_1, \ldots, \Pie^{(d)}\phi_d\right)$.

\subsubsection*{Interpolating discrete quantities between the grids}

For the proper implementation of the scheme we need to interpolate the functions defined on the primary grid to the dual grid and vice versa. To this end we define the \emph{average operator} for any scalar function $r_h \in \Xspace,$  
\[	\avs{r_h}_\sigma = \frac{r_K + r_L}{2},\ \sigma= K|L \in \edges.
\]
If in addition, $\sigma= K|L \in \edgesi$ for an $i\in\{1, \ldots, d\}$, we write
\begin{align*}
\avs{r_h}^{(i)}_\sigma = \frac{r_K + r_L}{2},  \mbox{ and } \avsi{r_h} = \sum_{\sigma \in \edgesi} 1_{D_\sigma}  \avsi{r_h}_\sigma .
\end{align*}
Further, for vector--valued functions $\bfr_h=(r_{1,h}, \ldots, r_{d,h}) \in \Xspace^d$ and $ \vvh=(v_{1,h},\ldots,v_{d,h}) \in \Yspace,$ we define 
\begin{align*}
\avs{\bfr_h} &= \left( \avs{r_{1,h}}^{(1)}, \ldots, \avs{r_{d,h}}^{(d)} \right),   \quad  
\\
\avih  = \sum_{K \in \mesh} 1_{K}  (\avih)_K,  \quad  (\avih)_K &= \frac{v_{i,{\sigmap}}+ v_{i,{\sigmam}}}{2}, \; \text{ and }
(\Ov{\vv}_h)_K  = \sumi (\avih)_K \vei.
\end{align*}

\subsubsection*{Difference operators}
As we are working with the staggered grid, we need to define the difference operators for all grids described above. 
Thus, for any $r_h \in \Xspace$ and $\vvh \in \Yspace$, we introduce the following \emph{discrete derivatives} for $ i\in \range{1}{d},$
\begin{equation}\label{der1}
\begin{split}
& \pdedgei r_h (\bfx) = \sum_{\sigma \in \edgesi}1_{D_\sigma} (\pdedgei r_h)_{\sigma} , \quad \mbox{where }  \ (\pdedgei r_h)_{\sigma}  = \frac{r_{L} - r_{K}}{h}, \ \sigma=\overrightarrow{ K|L}\in \edgesi, \; 
\\ &
 \pdmeshi \vih(\bfx) = \sumK 1_K (\pdmeshi \vih)_{K} , \quad \mbox{where } \ (\pdmeshi \vih)_{K} = \frac{v_i|_{\sigmap} - v_i|_{\sigmam}}{h}, \ K\in\mesh.
\end{split}
\end{equation}
Further, we extend the above notations to the definition of  the \emph{discrete gradient and divergence operators} 
\begin{equation}\label{div_grad}
\dih \vvh(\bfx)  = \sumi  \pdmeshi \vih (\bfx) \quad \mbox{and} \quad
  \Gradedge r(\bfx) = \left( \pdedge^{(1)}r, \ldots,  \pdedge^{(d)}r \right)(\bfx),
 \end{equation}
which are piecewise constant on the primary  and dual grid, respectively. 
It is easy to observe that 
\[(\dih \vvh)_K = \frac{1}{|K|}\sumfaceK \intSh{\vvh \cdot \bfn_{\sigma,K}}  .
\]
Next, we define the \emph{Laplace operators} for any $r_h\in\Xspace$ and  $\vvh\in\Yspace,$
\begin{equation*}
\begin{aligned}
& 
 \Lapm r_h(\bfx)  = \sum_{K\in\mesh}1_K (\Lapm r_h )_K,   \quad \mbox{where } \
(\Lapm r_h )_K =  \frac{1}{h^2} \sumneigh (r_L - r_K),  \\
& \Lape \vih(\bfx) =  \sum_{ \sigma \in \E}1_{D_\sigma} (\Lape \vih)_{\sigma}, \quad \mbox{where } \
  (\Lape \vih)_{\sigma} = \frac{1}{h^2} \sum_{\sigma' \in \neighdual(\sigma)} \big(v_{i,\sigma'} - v_{i,\sigma} \big). 
\end{aligned}
\end{equation*}
Denoting $\Lapmi r_h = \pdmeshi \left( \pdedgei r_h \right)$ we observe $\Lapm r_h =\sumi \Lapmi r_h $ for $r_h \in \Xspace.$

We specify the \emph{discrete velocity gradient} on the bidual grid which is different compared to the gradient operator defined in \eqref{div_grad}. Indeed, 
\[
\Gradedged \vv(\bfx) = \big( \nabla_\epsilon v_1(\bfx), \ldots, \nabla_\epsilon v_d(\bfx)\big) \quad \mbox{with }\
\Gradedged v_i(\bfx) = \big( \eth_1 v_i(\bfx), \ldots, \eth_d v_i(\bfx) \big),
\]
where
\[
\eth_j v_i(\bfx) = \sum_{\epsilon \in \widetilde{\E}_j } (\eth_j v_i)_{D_\epsilon} 1_{D_\epsilon} \ \mbox{ and }\; (\eth_j v_i)_{D_\epsilon} = \frac{v_{\sigma'} -v_{\sigma}}{h}, \mbox{ for } \epsilon = \overrightarrow{ D_\sigma | D_{\sigma'} } \in \widetilde{\E}_j \mbox{ and }
\sigma,  \sigma' \in \E_i.
\]

\subsubsection*{Upwind divergence}
We firstly remark the notation
\[
r^+ = \max\{0,r\}= \frac{1}{2} (r + |r|), \quad r^- = \min\{0,r\}=\frac{1}{2} (r -|r|), 
\]
and define an upwind function for $r_h \in \Xspace$ under a given velocity field $\vvh \in \Yspace$
\begin{align*}
r_h^{\up} = \sum_{ \sigma \in \E}r_\sigma^{\up} 1_{D_\sigma}, \quad 
r_\sigma^{\up}= \left\{ \begin{array}{ll}
r_K, & v_{i,\sigma} \geq 0 ,\\
r_L, & v_{i,\sigma} <0,  \\
\end{array}\right. \quad \sigma = \overrightarrow{ K|L}\in \facei, \; i \in \range{1}{d}.
\end{align*}
%where $\vvh \in \Yspace$ is a given velocity field.  We also extend the upwind value to the whole domain 
%\[ r_h^{\up} = \stiei r_\sigma^{\up} 1_{D_\sigma}\]
Then the \emph{upwind flux} is given by
\begin{equation*}
\Upi [r_h,\vvh](\bfx)= \stiei  \Upi[r_h,\vvh]_\sigma 1_{D_\sigma}, \quad
\Upi[r_h,\vvh]_\sigma =  r_\sigma^{\up} v_{i,\sigma} = r_K (v_{i,\sigma})^+ + r_L (v_{i,\sigma})^-.
\end{equation*}
Further, we introduce an \emph{upwind divergence} operator 
\begin{equation}\label{upwind_divergence}
\divup [r_h,\vvh](\bfx) = \dih \bUp[r_h,\vvh]
%\sum_{K \in \mesh} 1_K \divup [r_h,\vvh]_K , \quad 
  \quad \mbox{with} \quad  \bUp[r_h,\vvh] =\left( \Up^{(1)}[r_h,\vvh], \ldots, \Up^{(d)}[r_h,\vvh] \right), 
\end{equation}
and easily  observe that 
\begin{align*}
\divup [r_h,\vvh]_K %&=\left(\dih \bUp[r_h,\vvh]\right)_K \\
 =  \sumi \sum_{\sigma \in \edgesiK} \frac{|\sigma|}{|K|} \Upi [r_h,\vvh]_\sigma \vei \cdot \bfn_{\sigma,K}
=\frac{1}{|K|}\sum_{\sigma \in \edgesK} \intSh{ r_h^{\up}\vvh\cdot \bfn_{\sigma,K}} .
\end{align*}

\subsection{Preliminaries} 
In this section we introduce some preliminaries. 
First, we recall the inverse estimates from \cite[Lemma 2.3]{HS_MAC}. For $r_h \in \{\Xspace, \Yspace\}$ it holds 
\begin{equation} \label{inv_est}
\| r_h \|_{L^p} \lesssim h^{ d (\frac1p -\frac1q)  } \|r_h \|_{L^q} \ \mbox{ for any }\  1 \leq q \leq p \leq \infty.
\end{equation}
Next, by the scaling argument, we report the trace inequality, see~\cite[equation (2.26)]{FLNNS}
\begin{equation}\label{ineq_trace}
\norm{r_h}_{L^p(\pd K)} \leq h^{-1/p}\norm{r_h}_{L^p( K)} \mbox{ for any } p \in [1,\infty].
\end{equation}
%{\cmag I didn't find good citation for the trace inequality, see \cite[Lemma 2.8]{Karper}, \cite[equation (2.26)]{FLNNS}.}
Further, according to \cite[Lemma 2.5]{HS_MAC}   for  $r_h \in \Xspace, \; \vv_h \in \Yspace$ we have 
\begin{equation}\label{up_avg}
\Upi[r_h, \vv_h]_{\sigma} = \avsi{r_h} v_{\sigma} - \frac{h}{2} \abs{v_{\sigma}} (\pdedgei r_h)_{\sigma},  \ \sigma \in \edgesiK, \; K \in \mesh.
\end{equation}

Some useful estimates related to the projections onto the discrete function spaces are comprised in the following lemma.
\begin{lemma}\label{NC}
Let $\phi \in C^1$, $\Phi \in C^2$, then for $i\in\range{1}{d}$, we have 
\begin{subequations}
\begin{equation}\label{NC1}
\abs{ \pdedgei \Pim \phi }  \aleq \norm{\phi}_{C^1},  \quad 
 \abs{ \pdmeshi \PiEi \phi }  \aleq \norm{\phi}_{C^1},   \quad
 \pdedgej   \Ov{ \PiEi \phi}    \aleq \norm{\phi}_{C^1}, 
\end{equation}
\begin{equation}\label{NC2}
\norm{\Gradedge \Pim{\phi}  -  \Grad \phi}_{L^\infty} \aleq h \norm{\phi}_{C^2}, \quad
\norm{\Gradedged \Pie{\bfPhi}  -  \Grad \bfPhi}_{L^\infty} \aleq h \norm{\bfPhi}_{C^2},
\end{equation}
\begin{equation}\label{NC3}
\Lapm \Pim \Phi \aleq \norm{\Phi}_{C^2},  \quad \abs{ \pdmeshj  \pdedgej   \Ov{ \PiEi \Phi}    } \aleq  \norm{\Phi}_{C^2}, \quad \norm{D_t \avU_h}_{L^\infty L^\infty } \aleq \norm{\pdt \vUh}_{L^\infty W{1,\infty} }
\end{equation}
\end{subequations}
\end{lemma}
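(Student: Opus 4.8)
The unifying observation I would build on is that every projection here is a \emph{local average}: $(\Pim\phi)_K$ averages $\phi$ over the cell $K$, while $(\PiEi\phi)_\sigma$ averages it over the face $\sigma$; moreover, on the uniform grid any two neighbouring cells (resp. faces) are exact translates of one another. Consequently each discrete difference quotient of a projection rewrites as the average over a single cell/face of the corresponding difference quotient of $\phi$, and the classical one-dimensional estimates then finish the job. For the first bound in \eqref{NC1} I would use $|L|=|K|$ and the translation $L=K+h\vei$ to write
\[
(\pdedgei\Pim\phi)_\sigma=\frac{1}{h\,|K|}\int_K\big(\phi(\vx+h\vei)-\phi(\vx)\big)\dx,
\]
and then bound $\abs{\phi(\vx+h\vei)-\phi(\vx)}=\abs{\int_0^h\partial_i\phi(\vx+t\vei)\,\dd t}\le h\norm{\phi}_{C^1}$, the factor $h$ cancelling the $h^{-1}$. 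The second bound in \eqref{NC1} is the same computation with the volume average replaced by the surface averages over the two translated faces $\sigmap,\sigmam$, and the third is again the same, applied to the cell function $\Ov{\PiEi\phi}$ whose two constituent face averages translate by $h\ve_j$ as one moves to the neighbouring cell in direction $\ve_j$. In every case the cost is exactly $\norm{\phi}_{C^1}$.

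The Laplacian-type bounds in \eqref{NC3} would follow from the identical template applied to \emph{second} differences. Writing the primary Laplacian as a direction-wise sum of centred second differences and using that $K\pm h\vei$ are translates of $K$, I get
\[
(\Lapm\Pim\Phi)_K=\frac{1}{h^2}\sumi\frac{1}{|K|}\int_K\big(\Phi(\vx+h\vei)-2\Phi(\vx)+\Phi(\vx-h\vei)\big)\dx,
\]
and the elementary identity $\Phi(\vx+h\vei)-2\Phi(\vx)+\Phi(\vx-h\vei)=\int_0^h\!\int_{t-h}^{t}\partial_{ii}\Phi(\vx+s\vei)\,\dd s\,\dd t$ bounds the integrand by $h^2\norm{\Phi}_{C^2}$, which cancels $h^{-2}$. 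The term $\pdmeshj\pdedgej\Ov{\PiEi\Phi}$ is just a one-directional centred second difference in direction $j$ of the surface-averaged cell function $\Ov{\PiEi\Phi}$, so the same estimate applies verbatim with the volume average replaced by face averages.

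The consistency estimates \eqref{NC2} are where the real work lies, since there the genuine $\mathcal{O}(h)$ rate must be exhibited, not merely bounded. The extra ingredient I would invoke is the \emph{second-order} accuracy of centroid averaging: because $\int_K(\vx-\bfx_K)\dx=0$ and $\int_\sigma(\vx-\bfx_\sigma)\dS=0$, Taylor expansion gives $(\Pim\phi)_K=\phi(\bfx_K)+\mathcal{O}(h^2\norm{\phi}_{C^2})$ and $(\PiEi\phi)_\sigma=\phi(\bfx_\sigma)+\mathcal{O}(h^2\norm{\phi}_{C^2})$. Substituting into the centred quotient, whose natural evaluation point $\bfx_\sigma=\tfrac12(\bfx_K+\bfx_L)$ is precisely the centre of the dual cell $D_\sigma$, produces $(\pdedgei\Pim\phi)_\sigma=\partial_i\phi(\bfx_\sigma)+\mathcal{O}(h\norm{\phi}_{C^2})$; comparing this piecewise-constant value with $\Grad\phi$, which itself varies by at most $\mathcal{O}(h\norm{\phi}_{C^2})$ over $D_\sigma$, yields the first half of \eqref{NC2} in $L^\infty$. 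The second half transports the argument to the bidual grid: the face averages defining $\Pie^{(i)}\Phi_i$ are second-order accurate at the face centres, the quotient $\eth_j$ is centred at the bidual-cell centre $\bfx_\epsilon$, and one compares with the smoothly varying $\partial_j\Phi_i$. I expect this bidual computation to be the main obstacle, not for any new analytic idea but for the bookkeeping: one must track which faces $\sigma,\sigma'\in\E_i$ bound a given $\epsilon\in\widetilde{\E}_j$, verify that they are true $h\ve_j$-translates, and check that their centroids lie symmetrically about $\bfx_\epsilon$ so that the cancellation giving $\mathcal{O}(h)$ rather than $\mathcal{O}(1)$ really occurs.

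The time-difference bound in \eqref{NC3} is by contrast purely algebraic. Since the cell-averaging operator $\Ov{\cdot}$ is a convex combination of face values it is $L^\infty$-stable, and being linear it commutes with the backward difference $\pdt$; hence $\pdt\avU_h=\Ov{\pdt\vUh}$ and
\[
\norm{\pdt\avU_h}_{L^\infty L^\infty}\le\norm{\pdt\vUh}_{L^\infty L^\infty}\le\norm{\pdt\vUh}_{L^\infty W^{1,\infty}},
\]
the last inequality being the trivial embedding $W^{1,\infty}\hookrightarrow L^\infty$.
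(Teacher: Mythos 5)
Your proof is correct and follows essentially the same route as the paper's: both rest on elementary Taylor/mean-value expansions on the uniform grid, the paper applying the mean value theorem directly to the difference of cell averages (and once more for \eqref{NC2}), while you reach the same bounds via the translation identity for local averages and the second-order accuracy of centroid/face averaging, a slightly stronger intermediate step than is actually needed for the $\mathcal{O}(h)$ rate. The paper writes out only the first inequalities of \eqref{NC1} and \eqref{NC2} and declares the remaining cases analogous, so your treatment of the bidual-grid and second-difference terms simply fills in the steps the paper omits.
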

\begin{proof}
We will prove only the first inequalities of \eqref{NC1} and \eqref{NC2} as the rest can be done analogously. 
By the mean value theorem there exists $x^* \in \co{ x_L ,  x_K }$ such that for any $\sigma =\overrightarrow{K|L} \in \edgesi$ it holds
\[ (\pdedgei \Pim \phi )_\sigma  = \frac{ (\Pim \phi)_L - (\Pim \phi)_K  }{h} =\frac{\pd \phi}{ \pd {x_i}} (x^*).
\]
Therefore we get the first inequality of \eqref{NC1}, i.e., 
\[
\abs{ \pdedgei \Pim \phi }_\sigma \leq  \abs{ \frac{\pd \phi}{ \pd {x_i}} }_{\max}   \aleq \norm{\phi}_{C^1}.
\]
Similarly, we know that for any $x \in D_\sigma$ there exists $x^{**} \in \co {x, x^*}$, such that 
\[
\abs{ (\pdedgei \Pim \phi )_\sigma - \frac{\pd \phi}{\pd x_i}} = \abs{  \frac{\pd^2 \phi}{ \pd x_i^2}(x^{**}) } \leq  \abs{ \frac{\pd^2 \phi}{ \pd x_i^2 }}_{\max}   \aleq \norm{\phi}_{C^2},
\]
which indicates the first inequality of \eqref{NC2}. 
%We finish the proof. 
\end{proof}

It is easy to check the following integration by parts formulae, so we omit the proof here.
\begin{lemma}\cite[Lemma 2.1]{HS_MAC}\label{lem_ibp}
Let $r_h, \phi_h \in \Xspace,$ and $\vvh,$ $\bfPhi_h \in \Yspace.$ Then 
\begin{subequations}
\begin{equation}\label{pp}
-\intO{ \Lapm r_h \; \phi_h }
= \intO{ \Gradedge r_h  \cdot \Gradedge \phi_h} , \quad 
-\intO {\Lape \vvh \cdot \Phi_h }
= \intO {\Gradedged \vvh  : \Gradedged \Phi_h } ,
\end{equation} 
\begin{equation}\label{grad_div}
- \intO {\dih \vvh \; r_h  } %= \sumi \intO{ \vi  \pdedgei r_h } 
= \intO{ \vvh \cdot \Gradedge r_h }, 
\quad  
- \intO {\divup[r_h, \vvh] \phi_h   }=  \sumi \intO{ \Upi[r_h, \vvh] \cdot \pdedgei \phi_h } . 
\end{equation}
\end{subequations}
\end{lemma}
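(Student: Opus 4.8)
The plan is to reduce all four identities to a single discrete summation-by-parts rule and then obtain each statement as an immediate corollary. The rule I would isolate first is the (negative) adjointness of the primary-to-dual derivative $\pdedgei:\Xspace\to\Yspacei$ and the dual-to-primary derivative $\pdmeshi:\Yspace\to\Xspace$, namely
\begin{equation}\label{fundamental_sbp}
\intO{ \pdmeshi \vih \; r_h } = - \intO{ \vih \; \pdedgei r_h } \qquad \text{for every } r_h\in\Xspace,\ \vvh\in\Yspace,\ i\in\range{1}{d}.
\end{equation}

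To prove \eqref{fundamental_sbp} I would expand the left-hand side over the primary grid and use $|K|=h^{d}$ together with $(\pdmeshi\vih)_K=(v_i|_{\sigmap}-v_i|_{\sigmam})/h$, giving $\intO{\pdmeshi\vih\,r_h}=h^{d-1}\sumK r_K\big(v_i|_{\sigmap}-v_i|_{\sigmam}\big)$. The decisive step is to regroup this cell sum by faces: each face $\sigma=\overrightarrow{K|L}\in\edgesi$ carries its value $v_{i,\sigma}$ exactly twice, as the $i+$ face of $K$ and the $i-$ face of $L$, so that after collecting terms the sum becomes $h^{d-1}\sum_{\sigma=\overrightarrow{K|L}\in\edgesi}v_{i,\sigma}(r_K-r_L)=-\sum_{\sigma\in\edgesi}|D_\sigma|\,v_{i,\sigma}(\pdedgei r_h)_\sigma$, where I used $|D_\sigma|=h^{d}$ and $(\pdedgei r_h)_\sigma=(r_L-r_K)/h$. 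Since $\Omega$ is a flat torus there are no boundary faces and no stray boundary terms remain, and the right-hand side is precisely $-\intO{\vih\,\pdedgei r_h}$.

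Given \eqref{fundamental_sbp}, the formulae in \eqref{grad_div} follow at once. Writing $\dih\vvh=\sumi\pdmeshi\vih$ and applying \eqref{fundamental_sbp} term by term yields $-\intO{\dih\vvh\,r_h}=\sumi\intO{\vih\,\pdedgei r_h}=\intO{\vvh\cdot\Gradedge r_h}$. For the upwind formula I would observe that each component $\Upi[r_h,\vvh]$ of $\bUp[r_h,\vvh]$ is itself an element of $\Yspacei$, so that $\divup[r_h,\vvh]=\dih\bUp[r_h,\vvh]=\sumi\pdmeshi\Upi[r_h,\vvh]$; applying \eqref{fundamental_sbp} with $\vih$ replaced by $\Upi[r_h,\vvh]$ and $r_h$ by $\phi_h$ and summing over $i$ gives $-\intO{\divup[r_h,\vvh]\phi_h}=\sumi\intO{\Upi[r_h,\vvh]\,\pdedgei\phi_h}$. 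The first identity in \eqref{pp} is the same maneuver applied to $\Lapm=\sumi\Lapmi$ with $\Lapmi r_h=\pdmeshi(\pdedgei r_h)$: since $\pdedgei r_h\in\Yspacei$, \eqref{fundamental_sbp} with $\vih$ replaced by $\pdedgei r_h$ and $r_h$ by $\phi_h$ gives $-\intO{\Lapmi r_h\,\phi_h}=\intO{(\pdedgei r_h)(\pdedgei\phi_h)}$, and summing over $i$ produces $\intO{\Gradedge r_h\cdot\Gradedge\phi_h}$.

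The second identity in \eqref{pp} is the analogue one grid level higher. For each fixed direction $i$ the dual mesh $\Di$ is again a uniform grid, merely translated by $\tfrac{h}{2}\vei$, so the very same regrouping argument applies to the bidual forward differences $\eth_j$ and the matching bidual-to-dual difference built into $\Lape$, yielding componentwise $-\intO{\Lape \vih\,\Phi_{i,h}}=\sumj\intO{(\eth_j \vih)(\eth_j\Phi_{i,h})}$ for the components $\Phi_{i,h}$ of $\bfPhi_h$; summing over the velocity components $i$ assembles the full contraction $\intO{\Gradedged\vvh:\Gradedged\bfPhi_h}$. I expect the only genuinely delicate point throughout to be the regrouping in \eqref{fundamental_sbp} and its bidual counterpart — specifically, checking that every face (resp.\ bidual face) value is counted with the correct sign and multiplicity and that periodicity is what makes the boundary contributions disappear; once this bookkeeping is pinned down, all four identities are routine. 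For the bidual version one must additionally split the neighbour set $\neighdual(\sigma)$ by coordinate direction so that each $\epsilon=\overrightarrow{D_\sigma|D_{\sigma'}}\in\widetilde{\E}_j$ is again counted exactly once.
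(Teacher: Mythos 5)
Your proof is correct and is exactly the standard argument the paper has in mind: the paper itself omits the proof (delegating to \cite[Lemma 2.1]{HS_MAC}) precisely because all four identities reduce to the face-by-face regrouping you isolate as your fundamental summation-by-parts rule, with periodicity killing the boundary terms. The bookkeeping in your key identity and its bidual analogue (sign, multiplicity, $|K|=|D_\sigma|=|D_\epsilon|=h^d$, and the directional splitting of $\neighdual(\sigma)$) all check out.
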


The definition of the upwind divergence~\eqref{upwind_divergence} yields a simple corollary of Lemma~\ref{lem_ibp}.
 \begin{coro}\label{C:intdivup0}
Let $r_h \in \Xspace, \vvh  = [v_{1,h}, \cdots, v_{d,h}] \in \Yspace$. Then $\displaystyle \sumK \di_\Up [r_h,\vvh]_K = 0$.
\end{coro}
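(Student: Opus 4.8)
The statement is flagged as a simple corollary of Lemma~\ref{lem_ibp}, and indeed the plan is to specialize the integration-by-parts identity \eqref{grad_div} to a constant test function. First I would take $\phi_h \equiv 1$, which belongs to $\Xspace$ since a globally constant function is trivially piecewise constant on $\mesh$. Feeding this into the second identity of \eqref{grad_div},
\[
- \intO{ \divup[r_h,\vvh]\, \phi_h } = \sumi \intO{ \Upi[r_h,\vvh] \cdot \pdedgei \phi_h },
\]
the right-hand side vanishes because the discrete derivative of a constant is zero: by definition $(\pdedgei \phi_h)_\sigma = (\phi_L - \phi_K)/h = 0$ for every $\sigma = \overrightarrow{K|L}$ when $\phi_K = \phi_L$. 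Hence $\intO{ \divup[r_h,\vvh] } = 0$.

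\textbf{From the integral to the sum.} It then remains to convert this integral identity into the unweighted cell sum appearing in the statement. Since $\divup[r_h,\vvh] \in \Xspace$ is piecewise constant on the primary grid, $\intO{ \divup[r_h,\vvh] } = \sumK |K|\, \divup[r_h,\vvh]_K$. Because the grid is uniform, all $|K|$ are equal to the common value $h^d$, so this integral equals $h^d \sumK \divup[r_h,\vvh]_K$. Dividing by the nonzero factor $h^d$ yields $\sumK \divup[r_h,\vvh]_K = 0$, as claimed.

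\textbf{Main obstacle and cross-check.} There is essentially no obstacle here once the constant test function is used; the only point requiring mild care is the passage between the $|K|$-weighted integral sum and the unweighted sum, which is justified precisely by the uniformity of the mesh. As an independent sanity check, one can instead argue directly from the flux form $\divup[r_h,\vvh]_K = \frac{1}{|K|}\sum_{\sigma \in \edgesK} \intSh{ r_h^{\up}\vvh\cdot \bfn_{\sigma,K}}$: summing $|K|\,\divup[r_h,\vvh]_K$ over all $K$ regroups the contributions face by face, and each interior face $\sigma = K|L$ carries a single-valued upwind flux $\Upi[r_h,\vvh]_\sigma$ together with opposite normals $\bfn_{\sigma,K} = -\bfn_{\sigma,L}$, so the two contributions cancel in pairs. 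In the space-periodic (flat torus) setting every face is interior under the periodic identification, so no boundary terms survive, confirming the vanishing of the total sum.
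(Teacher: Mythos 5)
Your proof is correct and follows exactly the route the paper intends: the corollary is stated as an immediate consequence of the integration-by-parts formula \eqref{grad_div} applied with the constant test function $\phi_h\equiv 1$, and your passage from the $|K|$-weighted integral to the unweighted sum is properly justified by the uniformity of the mesh. The direct face-by-face cancellation you add as a cross-check is also valid in the periodic setting and matches the conservativity argument the paper invokes for the discrete conservation of mass.
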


The next lemma provides identities necessary to derive the consistency formulation of the proposed scheme.
\begin{lemma}
It holds for $\bfPhi \in W^{1,1}(\Omega)$ that 
\begin{equation}\label{div_equiv}
\int_K  \dih \Pie \bfPhi  = \int_K  \dix \bfPhi, \quad  K \in \mesh. 
\end{equation}
\end{lemma}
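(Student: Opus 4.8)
The plan is to prove the identity $\int_K \dih \Pie \bfPhi = \int_K \dix \bfPhi$ by unwinding both sides into boundary integrals over the faces of $K$ and matching them face-by-face. The core principle is that both the discrete divergence and the continuous divergence, once integrated over the cell $K$, reduce to a sum of fluxes through the faces $\sigma \in \edgesK$; the projection $\PiEi$ is precisely defined as the face-average, so each discrete flux should recover the corresponding continuous flux exactly.

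First I would compute the left-hand side. Using the observation recorded just after \eqref{div_grad}, namely $(\dih \vvh)_K = \frac{1}{|K|}\sumfaceK \intSh{\vvh \cdot \bfn_{\sigma,K}}$, applied to $\vvh = \Pie\bfPhi$, I get
\[
\int_K \dih \Pie \bfPhi = \sumfaceK \int_\sigma \Pie\bfPhi \cdot \bfn_{\sigma,K}\dS.
\]
On each face $\sigma \in \edgesiK$ the outer normal is $\pm\vei$, so only the $i$-th component $\Pie^{(i)}\Phi_i$ contributes, and since $\Pie^{(i)}\Phi_i$ is constant on $D_\sigma$ equal to $(\PiEi\Phi_i)_\sigma = \frac{1}{|\sigma|}\int_\sigma \Phi_i\dS$, the face term becomes exactly $\int_\sigma \Phi_i\, \vei\cdot\bfn_{\sigma,K}\dS = \int_\sigma \bfPhi\cdot\bfn_{\sigma,K}\dS$. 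The definition of the projection is what makes this step clean: the face integral of the projected function equals the face integral of the original function.

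Next I would treat the right-hand side by the classical divergence (Gauss) theorem on the single cell $K$, which gives $\int_K \dix\bfPhi = \int_{\pd K}\bfPhi\cdot\bfn_{\sigma,K}\dS = \sumfaceK \int_\sigma \bfPhi\cdot\bfn_{\sigma,K}\dS$. Comparing with the expression obtained for the left-hand side, the two sums agree term by term, which establishes the identity. The argument is essentially an exact-quadrature statement: the MAC projection integrates fluxes exactly over faces, so no consistency error appears and the equality is not merely approximate.

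The main obstacle, though minor, is bookkeeping the normal directions: on each $\sigma \in \edgesiK$ one must verify that $\bfn_{\sigma,K} = \pm\vei$ and that only the matching component $\Phi_i$ survives, so that the single-component projection $\PiEi$ supplies exactly the right face average. One should also confirm that $\bfPhi \in W^{1,1}(\Omega)$ gives enough regularity both for the face traces defining $\PiE$ and for the divergence theorem on $K$ to hold — this is standard, since $W^{1,1}$ functions have well-defined traces and the Gauss theorem applies. Aside from this, the proof is a direct two-sided computation with no delicate estimates, which is presumably why the statement can be verified so quickly.
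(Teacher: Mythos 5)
Your proposal is correct and follows essentially the same route as the paper: expand $\int_K \dih \Pie\bfPhi$ into a sum of face fluxes, use that $(\PiEi\Phi_i)_\sigma$ is the exact face average so each discrete flux equals $\int_\sigma \bfPhi\cdot\bfn_{\sigma,K}\dS$, and conclude by Gauss's theorem on $K$. The paper's proof is exactly this computation, written as a single chain of equalities.
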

\begin{proof}
From the definition of $\dih$, we know that 
\begin{align*}
& \intK{\dih \Pie \bfPhi } =  \sumi \sum_{\sigma \in \edgesiK }  |\sigma|  \PiEi \Phi_i \vei \cdot \bfn_{\sigma, K} 
=  \sumi \sum_{\sigma \in \edgesiK }  \intSh{  \Phi_i \vei \cdot \bfn_{\sigma, K} }
\\ &
=   \sum_{\sigma \in \edgesK }  \intSh{ \bfPhi   \cdot \bfn_{\sigma, K} }
=  \intK{  \dix  \bfPhi}.
\end{align*}
\end{proof}

\begin{lemma}
Let $r_h \in \Xspace$, $\vvh \in \Yspace$, $\phi\in C^2$. Then there hold
\begin{equation}\label{conv1}
\begin{aligned}
 \intO{\divup[r_h,\vvh] \Pim \phi } &=\intO{ r_h \vvh \cdot \Grad\phi }  
+\intO{ r_h \vvh \cdot \big(\Gradedge(\Pim \phi) - \Grad\phi \big)} 
\\&
+\frac{h}{2}\sumi \sumK\intK{r_h \Lapmi(\Pim\phi) \Ov{\abs{\vih}} }
+\frac{h}{2}\sumi \sumK\intK{r_h \Ov{\pdedgei(\Pim\phi)} \pdmeshi{\abs{\vih}}},
\end{aligned}
\end{equation}
\begin{equation}\label{conv2}
\begin{aligned}
 \sumi \intO{ \avsi{\divup[r_h,\vvh]} \PiEi \phi } 
&=\intO{ r_h \vvh \cdot \Grad\phi }  
+\intO{ r_h \vvh \cdot \big(\Gradedge(\Pim \PiEi \phi) - \Grad\phi \big)} 
\\&
+\frac{h}{2}\sumi \sumK\intK{r_h \Lapmi(\Pim\PiEi\phi) \Ov{\abs{\vih}} }\\
&+\frac{h}{2}\sumi \sumK\intK{r_h \Ov{\pdedgei(\Pim\PiEi\phi)} \pdmeshi{\abs{\vih}}}.
\end{aligned}
\end{equation}
\end{lemma}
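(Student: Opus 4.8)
The plan is to prove both identities by the same mechanism: the discrete integration-by-parts formula \eqref{grad_div} together with the upwind decomposition \eqref{up_avg}, followed by a discrete summation by parts and a discrete Leibniz rule that reshape the numerical-diffusion contribution into the two correction terms. For \eqref{conv1} I would first apply the second identity of \eqref{grad_div} with $\phi_h=\Pim\phi\in\Xspace$ to transfer the upwind divergence onto the test function, turning $\intO{\divup[r_h,\vvh]\Pim\phi}$ into $\sumi\intO{\Upi[r_h,\vvh]\,\pdedgei\Pim\phi}$ (with the sign dictated by \eqref{grad_div}), and then substitute \eqref{up_avg}, splitting each flux $\Upi[r_h,\vvh]$ into the centered part $\avsi{r_h}\,\vih$ and the numerical-diffusion part $-\frac{h}{2}\abs{\vih}\,\pdedgei r_h$.

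For the centered part I would use that each dual cell $D_\sigma=D_{\sigma,K}\cup D_{\sigma,L}$ splits into two equal halves carrying the values $r_K$ and $r_L$, so that $\intO{\avsi{r_h}\,\vih\,\pdedgei\Pim\phi}=\intO{r_h\,\vih\,\pdedgei\Pim\phi}$; summing over $i$ this reproduces, up to sign, the discrete convective term $\intO{r_h\vvh\cdot\Gradedge\Pim\phi}$, and adding and subtracting $\Grad\phi$ delivers the first two terms on the right, the exact convective term $\intO{r_h\vvh\cdot\Grad\phi}$ and the consistency remainder $\intO{r_h\vvh\cdot(\Gradedge\Pim\phi-\Grad\phi)}$. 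For the numerical-diffusion part $\frac{h}{2}\sumi\intO{\abs{\vih}\,(\pdedgei r_h)\,(\pdedgei\Pim\phi)}$ I would perform a discrete summation by parts in the $i$-th direction to move the difference off $r_h$; using periodicity and the identity $|D_\sigma|=|K|$ this pairs $r_h$ with $\pdmeshi\big(\abs{\vih}\,\pdedgei\Pim\phi\big)$ on the primary grid. The symmetric discrete Leibniz rule $(\pdmeshi(fg))_K=\Ov f_K(\pdmeshi g)_K+\Ov g_K(\pdmeshi f)_K$, applied with $f=\abs{\vih}$ and $g=\pdedgei\Pim\phi$ and combined with $\pdmeshi\pdedgei=\Lapmi$, then splits this into precisely the term $\frac{h}{2}\sumi\sumK\intK{r_h\,\Lapmi(\Pim\phi)\,\Ov{\abs{\vih}}}$ and the term $\frac{h}{2}\sumi\sumK\intK{r_h\,\Ov{\pdedgei(\Pim\phi)}\,\pdmeshi\abs{\vih}}$.

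For \eqref{conv2} the only new ingredient is to reduce the averaged, dual-grid-tested left side to a primary-grid pairing before running the argument above. I would use the adjointness of the two interpolation operators, $\intO{\avsi{g_h}\,\psi_{i,h}}=\intO{g_h\,\Ov{\psi_{i,h}}}$ for $g_h\in\Xspace$ and $\psi_{i,h}\in\Yspacei$, together with the identity $\Ov{\PiEi\phi}=\Pim\PiEi\phi$ (both equal the cell average of the two face values of $\PiEi\phi$ in direction $i$), to rewrite $\sumi\intO{\avsi{\divup[r_h,\vvh]}\,\PiEi\phi}$ as a primary-grid pairing of $\divup[r_h,\vvh]$ with $\Pim\PiEi\phi$. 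From there the four steps used for \eqref{conv1} apply with $\Pim\phi$ replaced throughout by $\Pim\PiEi\phi$; in particular the consistency remainder becomes $\intO{r_h\vvh\cdot(\Gradedge(\Pim\PiEi\phi)-\Grad\phi)}$ and the last two terms carry $\Pim\PiEi\phi$ in place of $\Pim\phi$.

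The main obstacle is the summation-by-parts bookkeeping across the staggered primary and dual grids in the numerical-diffusion term: one must keep track of the volume factors $|D_\sigma|=|K|$, the orientation of $\sigma=\overrightarrow{K|L}$, and the vanishing of the boundary contributions by periodicity, and then apply the discrete Leibniz rule so that the two resulting pieces land exactly on the prescribed $\Lapmi$ and $\pdmeshi\abs{\vih}$ structures. For \eqref{conv2} the delicate point is matching the dual-grid average $\avsi{\cdot}$ with the primary projection $\Ov{\cdot}=\Pim(\cdot)$, which must be arranged so that the remainder of the argument is a literal copy of the proof of \eqref{conv1}; the global signs are pinned down once and for all by \eqref{grad_div}.
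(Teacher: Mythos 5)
Your proposal follows essentially the same route as the paper's proof: discrete integration by parts via \eqref{grad_div}, the upwind decomposition \eqref{up_avg}, the half-cell identity converting $\avsi{r_h}\vih$ into $r_h\vih$ on each dual cell, a summation by parts back to the primary grid, and the symmetric discrete Leibniz rule (the paper calls it the ``chain rule'') to split the numerical-diffusion remainder into the $\Lapmi(\Pim\phi)\,\Ov{\abs{\vih}}$ and $\Ov{\pdedgei(\Pim\phi)}\,\pdmeshi\abs{\vih}$ pieces; for \eqref{conv2} the paper merely remarks that the argument is similar, whereas you supply the (correct) reduction via the adjointness of the two averaging operators. The only point to watch is the sign: \eqref{grad_div} as stated carries a minus sign that the paper's own first display (and the conclusion \eqref{conv1}) does not, so make sure your ``sign dictated by \eqref{grad_div}'' is resolved consistently with the statement being proved.
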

\begin{proof} First, we use the integration by parts formulae stated in Lemma \ref{lem_ibp} and the equality \eqref{up_avg} to get  
\begin{equation*}
\begin{aligned}
 \sumK \intK{\divup[r_h,\vvh] \Pim \phi }
&= \sumi \intDsi{\Upi[r_h,\vvh] \pdedgei(\Pim \phi)}
\\& = 
\sumi \intDsi { \avsi{r_h} \vih  \pdedgei(\Pim \phi) } 
-\frac{h}{2}\sumi \intDsi { \abs{\vih} (\pdedgei r_h) \pdedgei(\Pim \phi)} 
\\& = 
\intO{ r_h \vvh \cdot \Gradedge(\Pim \phi) } 
+ \underbrace{ \frac{h}{2}\sumi \sumK\intK{r_h \pdmeshi\big( \pdedgei(\Pim\phi)\abs{\vih}\big)} }_{=I}
\\& =
\intO{ r_h \vvh \cdot \Grad\phi }  
+\intO{ r_h \vvh \cdot \big(\Gradedge(\Pim \phi) - \Grad\phi \big)}  + I.
\end{aligned}
\end{equation*}
Further, using the chain rule, the term $I$ can be  written as 
\[  I = \frac{h}{2}\sumi \sumK\intK{r_h \Lapmi(\Pim\phi) \Ov{\abs{\vih}} }
+
\frac{h}{2}\sumi \sumK\intK{r_h \Ov{\pdedgei(\Pim\phi)} \pdmeshi{\abs{\vih}}}
\]
which implies \eqref{conv1}. The proof of \eqref{conv2} is more or less similar, and thus we omit the details. 
\end{proof}

We shall  also need the following Sobolev--Poincar\' e--type inequality which can be proved exactly as in  \cite[Theorem 11.23]{FeiNov_book}. 
\begin{lemma}\label{lem_sobolev}
Let $r_h>0$ be a scalar function satisfying
\[
0< \intO{ r_h } =c_M, \quad \intO{ r_h^\gamma }\leq c_E \quad \mbox{ for  } \gamma >1,
\]
where the positive constants $c_M$ and $c_E$ are independent of the mesh parameter $h$. Then the following Sobolev--Poincar\' e--type inequality holds true:
\begin{equation}\label{ineq_sobolev}
\norm{\vvh}_{L^6(\Omega)}^2 \leq c \intO {|\Gradedged \vvh|^2}  + c \left( \intO{ r_h |\vvh| } \right)^2 , 
\end{equation}
for any $\vvh \in \Yspace$, where the constant c depends on $c_M,$ $c_E$ but not on the mesh parameter. 

In particular, by setting $r_h\equiv 1$ we have 
\[
\norm{\vvh}_{L^6(\Omega)}^2 \leq c  \left( \norm{ \Gradedged \vvh}_{L^2(\Omega)}^2  +   \norm{ \vvh }_{L^1(\Omega)}^2  \right).
\]
\end{lemma}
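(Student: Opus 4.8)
The plan is to establish the discrete Sobolev--Poincar\'e inequality \eqref{ineq_sobolev} by mimicking the continuous argument from \cite[Theorem 11.23]{FeiNov_book}, with the discrete velocity gradient $\Gradedged$ playing the role of the full continuous gradient. The key structural fact I would rely on is that $\Gradedged$ is a genuine discrete gradient on the bidual grid capturing \emph{all} partial derivatives $\eth_j v_i$, so that a discrete Sobolev embedding $\norm{\vvh}_{L^6} \aleq \norm{\Gradedged \vvh}_{L^2} + \norm{\vvh}_{L^2}$ (or with some lower-order norm on the right) is available on the torus; such an embedding is the discrete counterpart of $W^{1,2}(\Omega)\hookrightarrow L^6(\Omega)$ valid in dimensions $d=2,3$. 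The whole point is then to replace the $\norm{\vvh}_{L^2}$ (or mean-value) control on the right-hand side by the weighted quantity $\intO{r_h|\vvh|}$, using the mass and energy bounds $\intO{r_h}=c_M$ and $\intO{r_h^\gamma}\leq c_E$.

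First I would invoke (or prove, by the standard argument of extending $\vvh$ periodically and applying the continuous embedding to a mollification, or by a direct discrete Fourier/difference-quotient estimate on the torus) the basic discrete embedding
\[
\norm{\vvh}_{L^6(\Omega)} \aleq \norm{\Gradedged \vvh}_{L^2(\Omega)} + \abs{\Ov{\vvh}},
\quad \text{where } \Ov{\vvh} = \intO{\vvh}
\]
denotes the spatial mean. This isolates the gradient term, which already appears on the right of \eqref{ineq_sobolev}, and reduces the task to bounding the mean $\abs{\Ov{\vvh}}$ by the two terms on the right-hand side. Next, I would control the mean by a Poincar\'e-type splitting: write $\abs{\Ov{\vvh}} \aleq \intO{r_h|\vvh|} + \abs{\intO{(1-\tfrac{r_h}{c_M}c_M^{-1}\cdots)\vvh}}$, i.e. compare the constant weight $1$ against the density weight $r_h$ normalized by its total mass $c_M$.

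The heart of the matter, and the step I expect to be the main obstacle, is the weighted estimate controlling the difference between the unweighted mean of $\vvh$ and its $r_h$-weighted mean. Concretely one writes
\[
\intO{\Big(1 - \frac{r_h}{c_M}\Big)\vvh} = \intO{\Big(1-\frac{r_h}{c_M}\Big)\big(\vvh - \Ov{\vvh}\big)},
\]
and bounds the right-hand side by a H\"older inequality splitting off $\norm{1-r_h/c_M}_{L^{\gamma'}}$ (controlled by $c_M$ and $c_E$ via the assumed energy bound, using $\gamma>1$ so that $r_h\in L^\gamma$) times $\norm{\vvh-\Ov{\vvh}}_{L^{p}}$ for the conjugate exponent, the latter being absorbed into $\norm{\Gradedged\vvh}_{L^2}$ by a discrete Poincar\'e--Wirtinger inequality. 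The delicate points are: ensuring the exponents match the available integrability (for $d=3$ and $\gamma$ close to $1$ one must check $\gamma' \leq 6$ or use the full $L^6$ control carefully), and verifying that the absorption constant depends only on $c_M, c_E$ and not on $h$. One then completes the square and, after an application of Young's inequality to absorb any gradient term appearing with a small coefficient, arrives at the squared form \eqref{ineq_sobolev}. The special case $r_h\equiv 1$ is immediate since then $c_M=\abs{\Omega}$, $c_E=\abs{\Omega}$, and the weighted mean is exactly the unweighted mean, so the bound reduces to the pure discrete embedding with the $L^1$ norm on the right.
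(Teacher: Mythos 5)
Your overall skeleton --- a discrete Sobolev embedding $\norm{\vvh}_{L^6}\aleq \norm{\Gradedged\vvh}_{L^2}+|\Ov{\vvh}|$ (with $\Ov{\vvh}$ denoting, as in your notation, the spatial mean) followed by a control of that mean through the weighted integral $\intO{r_h|\vvh|}$ --- has the right shape, and it is consistent with the paper, which gives no proof at all but simply asserts that the lemma ``can be proved exactly as in \cite[Theorem 11.23]{FeiNov_book}''. However, your central step has a genuine gap in exactly the regime the paper needs. You propose to bound $\intO{(1-r_h/c_M)(\vvh-\Ov{\vvh})}$ by H\"older, putting the velocity factor in $L^{\gamma'}$ and absorbing $\norm{\vvh-\Ov{\vvh}}_{L^{\gamma'}}$ into $\norm{\Gradedged\vvh}_{L^2}$ via a Poincar\'e--Wirtinger inequality. (As written the exponents are even swapped: $r_h\in L^{\gamma}$ does \emph{not} control $\norm{1-r_h/c_M}_{L^{\gamma'}}$ since $\gamma'>\gamma$ for $\gamma<2$; but grant that this is a slip.) The absorption $\norm{\vvh-\Ov{\vvh}}_{L^{\gamma'}}\aleq\norm{\Gradedged\vvh}_{L^2}$ requires $\gamma'\le 6$ when $d=3$, i.e.\ $\gamma\ge 6/5$. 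For $\gamma\in(1,6/5)$ one has $\gamma'>6$ and no such bound exists; ``using the full $L^6$ control carefully'' does not rescue it, because pairing with $L^6$ forces the density factor into $L^{6/5}$, which the hypothesis $\intO{r_h^\gamma}\le c_E$ does not supply when $\gamma<6/5$. You flag this as a delicate point but leave it unresolved, and it cannot be waved away: the lemma underlies the bound $\norm{\vuh}_{L^{2}L^6}\aleq1$ in \eqref{est_u}, which the paper uses for the full range $\gamma>1$.

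The fix is to exploit the density from below rather than as a H\"older weight from above. The constraints $\intO{r_h}=c_M$ and $\intO{r_h^\gamma}\le c_E$ force the set $T=\{r_h\ge c_M/(2|\Omega|)\}$ to carry at least half the mass, whence $c_M/2\le\int_T r_h\le c_E^{1/\gamma}|T|^{1-1/\gamma}$ and $|T|\ge\delta_0(c_M,c_E,\gamma)>0$. Then, componentwise,
\[
|T|\,\abs{\Ov{\vvh}}\;\le\;\int_T\abs{\vvh}\dx+\norm{\vvh-\Ov{\vvh}}_{L^1}\;\le\;\frac{2|\Omega|}{c_M}\intO{r_h\abs{\vvh}}+c\,\norm{\Gradedged\vvh}_{L^2},
\]
and no integrability of $r_h$ beyond $L^\gamma$ with $\gamma>1$ is ever used. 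This is, in substance, how the cited continuous result handles general $\gamma>1$; combined with your discrete embedding and a discrete Poincar\'e--Wirtinger inequality (both of which do require a justification on the staggered dual grid with the bidual difference quotients, but are standard in the finite volume literature the paper already cites for the scalar case), it closes the argument. Your remaining steps, including the reduction for $r_h\equiv1$, are fine.
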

 
Since the convergence proof presented in Section~\ref{sec:convergence} is based on the theory of dissipative measure--valued solutions (DMV), for completeness, we recall the definition of DMV solution \cite[Definition 2.1]{FGSGW} and the related weak--strong uniqueness principle \cite[Theorem 4.1]{FGSGW} for the compressible Navier--Stokes equations. 

\begin{defi}[DMV solution]\label{def_dmvs} 
We say that a parametrized family of probability measures $\{ \Nu _{t,x} \}_{(t,x)\in (0,T)\times\Omega}$,
\begin{equation*}
\Nu _{t,x} \in L^{\infty}_{weak} \Big((0,T)\times\Omega;\, \mathcal{P}(Q) \Big),\  Q = \left\{ [\vr, \vu] \ \Big|\ \vr \in [0, \infty), \ \vu \in  R^d \right\},
\end{equation*}
is \emph{a DMV solution} of the Navier--Stokes system in $(0,T)\times\Omega$ with the initial condition $\Nu_{0,x}\in \mathcal{P}(Q)$ and dissipative defect $\mathcal{D} \in L^{\infty}(0,T),\ \mathcal{D} \ge 0$, if the following holds:
\begin{itemize}
\item
\begin{equation*}
\left[\int_{\Omega} \myangle{\Nu_{t,x};\vr} \phi(t,\cdot)\dx\right]_{t=0}^{t=\tau} =
\int_0^{\tau}  \int_{\Omega}[\myangle{\Nu_{t,x};\vr} \pdt \phi + \myangle{\Nu_{t,x};\vr \bfu}\cdot \Grad  \phi] \dx \dt
\end{equation*}
for any $0 \leq \tau \leq T$, and any $\phi \in C^1\big([0,T]\times \Omega\big)$;

\item
\begin{multline*}%\label{mvs-mome}
\left[\int_{\Omega} \myangle{\Nu_{t,x};\vr \vu} \bfPhi(t,\cdot)\dx \right]_{t=0}^{t=\tau}  =
\int_0^{\tau}  \int_{\Omega}[\myangle{\Nu_{t,x};\vr \vu} \pdt \bfPhi + \myangle{\Nu_{t,x};\vr \bfu \otimes \vu + p(\vr) \mathbb I} : \Grad  \bfPhi  ] \dx \dt
\\
- \int_0^{\tau} \int_{\Omega} \mathcal{S}(\Grad \vu): \Grad \bfPhi \dx \dt
+ \int_0^{\tau} \myangle{R^M;\Grad  \bfPhi }\dt
\end{multline*}
for any $0 \leq \tau \leq T$, and any $\bfPhi \in C^1_c\big([0,T]\times\Omega; R^d\big)$,
where
\[
 \vu_{t,x} = \left< \Nu_{t,x} ; \vu \right>, \vu \in L^2(0,T; W^{1,2}(\Omega; R^d)), 
% \ \mathcal{S}(\Grad \vu) = \mu \Grad \vu,
\text{ and }
R^M\in L^1\big(0,T;\mathcal{M}({\Omega})\big);
\]

\item
\begin{equation*}%\label{mvs-entroy}
\left[\int_{\Omega} \left\langle\Nu_{t,x};\frac12 \vr \bfu^2+H(\vr)\right\rangle \dx \right]_{t=0}^{t=\tau}  + \int_0^{\tau} \int_{\Omega} \mathcal{S}(\Grad \vu):\Grad \bfu \dx \dt + \mathcal{D}(\tau) \le 0,
\end{equation*}
for a.a. $0 \leq \tau \leq T$.%, where $\Hc(\vr) = \vr \int_1^{\vr} \frac{p(z)}{z^2} dz$ stands for the pressure potential.  
The dissipation defect $\mathcal{D}$ dominates the concentration measure $R^M$, specifically,
\begin{equation*}
\left|\myangle{R^M(\tau); \phi }\right| \lesssim \xi(\tau) \mathcal{D}(\tau) \norm{\phi }_{C({\Omega})}, \mbox{ for some}\ \xi \in L^{1}(0,T).
 \end{equation*}
\end{itemize}
\end{defi}

\begin{thm}[DMV weak--strong uniqueness]\label{thm_ws_principle}
Let $\Omega \subset R^d,$ $d=2,3,$ be a space--periodic domain. Suppose the pressure $p$ satisfies \eqref{assumption_p}. Let $\mathcal{V}_{t,x}$ be a dissipative measure--valued solution to the barotropic Navier--Stokes system \eqref{strong_ns} in $(0,T) \times \Omega$ with the initial state represented by $\mathcal{V}_0$ in the sense specified in Definition~\ref{def_dmvs}. Let $(\vr, \vu)$ be a strong solution of \eqref{strong_ns} in $(0,T) \times \Omega$ belonging to the class
\begin{equation*}
\vr, \Grad  \vr, \vu, \Grad  \vu \in C([0,T]\times {\Omega}),\ \partial_t \vu \in L^2\left(0,T;C( {\Omega};R^d) \right), \ \vr>0. 
\end{equation*}
Then, if the initial states coincide, meaning
$\mathcal{V}_{0,x} = \delta_{(\vr(0,x),\vu(0,x))}$ for a.a. $x \in \Omega,$ then the dissipation defect $\mathcal{D} = 0,$ and
$\mathcal{V}_{\tau,x} = \delta_{(\vr(\tau,x),\vu(\tau,x))}$ for a.a. $\tau \in (0, T),$ $x \in \Omega.$
\end{thm}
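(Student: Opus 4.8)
The plan is to prove the DMV weak--strong uniqueness principle via the relative energy functional $\mathfrak{E}(\vr,\vu \mid r,\vU)$ introduced in the excerpt, evaluated along the measure-valued solution against the strong solution $(r,\vU)=(\vr,\vu)$ (the strong solution). The guiding philosophy is a Gr\"onwall argument: one derives a differential inequality of the form
\begin{equation*}
\left[\int_\Omega \myangle{\Nu_{t,x}; \mathbb{E}(\vr \mid r) + \tfrac12 \vr \abs{\vu-\vU}^2} \dx\right]_{t=0}^{t=\tau} + \mathcal{D}(\tau) \lesssim \int_0^\tau \left(\int_\Omega \myangle{\Nu_{t,x};\ \cdots}\dx\right)\dt,
\end{equation*}
where the right-hand side is controlled by the relative energy itself plus terms absorbable by the dissipation defect; since the initial states coincide the left-hand side starts at zero, forcing $\mathfrak{E}\equiv 0$ and $\mathcal{D}\equiv 0$, which yields the claimed equality $\Nu_{\tau,x}=\delta_{(\vr,\vu)}$.

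The key computational step is to expand the relative energy inequality. First I would take the energy inequality from Definition~\ref{def_dmvs} (the third bullet) as the starting point, then use the continuity and momentum identities (first and second bullets) tested against the strong-solution-dependent quantities $\Hc'(r)-\tfrac12\abs{\vU}^2$ and $\vU$ respectively, to produce the cross terms that reconstruct $\mathfrak{E}$. One then subtracts the identities satisfied by the strong solution itself: since $(r,\vU)$ solves \eqref{strong_ns} classically, it obeys the exact continuity and momentum balances, and these can be substituted to cancel the leading-order terms. After regrouping, the remaining integrand splits into a viscous dissipation part (handled by the structure of $\bS$ and coercivity, using $\mu>0$), a convective remainder of the form $\myangle{\Nu_{t,x}; \vr(\vu-\vU)\otimes(\vu-\vU)}:\Grad\vU$ bounded by $\norm{\Grad\vU}_{L^\infty}$ times the kinetic part of $\mathfrak{E}$, and a pressure remainder involving $\mathbb{E}(\vr\mid r)$ controlled using the convexity of $\Hc$ and the relation $\vr\Hc'(\vr)-\Hc(\vr)=p(\vr)$.

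The main obstacle will be the treatment of the concentration measure $R^M$ and the dissipation defect $\mathcal{D}$: the momentum identity carries the extra term $\int_0^\tau \myangle{R^M;\Grad\bfPhi}\dt$, and one must show this is dominated by $\mathcal{D}$ via the stated bound $\abs{\myangle{R^M(\tau);\phi}}\lesssim \xi(\tau)\mathcal{D}(\tau)\norm{\phi}_{C(\Omega)}$, with $\xi\in L^1(0,T)$, so that it enters the Gr\"onwall estimate with an integrable coefficient. Equally delicate is verifying that the pressure and convective remainders are genuinely controlled by $\mathfrak{E}$ uniformly; here the restriction $\gamma>1$ and the growth of $\mathbb{E}(\vr\mid r)$ enter, and one must handle the regimes $\vr$ near $r$ and $\vr$ far from $r$ (small/large density) separately, exploiting that $r=\vr(t,x)$ is bounded above and below away from zero by the strong-solution regularity $\vr>0$, $\vr\in C([0,T]\times\Omega)$.

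Once the differential inequality
\begin{equation*}
\mathfrak{E}(\tau) + \int_0^\tau \! D_{\mathrm{visc}} \dt + \mathcal{D}(\tau) \lesssim \mathfrak{E}(0) + \int_0^\tau \big(1+\xi(t)\big)\big(\mathfrak{E}(t)+\mathcal{D}(t)\big)\dt
\end{equation*}
is established, Gr\"onwall's lemma with the $L^1$ kernel $1+\xi$ closes the argument: $\mathfrak{E}(0)=0$ by the coinciding initial data gives $\mathfrak{E}(\tau)=0$ and $\mathcal{D}(\tau)=0$ for a.a.\ $\tau$. Finally, $\mathfrak{E}(\tau)=0$ together with the strict convexity of $\mathbb{E}(\cdot\mid r)$ at $\vr=r$ and positivity of the kinetic term forces the Young measure to concentrate, i.e.\ $\Nu_{\tau,x}=\delta_{(\vr(\tau,x),\vu(\tau,x))}$, completing the proof. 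I would only sketch these convexity/concentration details, as they are standard once $\mathfrak{E}\equiv0$ is in hand.
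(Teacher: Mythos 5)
The paper does not actually prove this theorem: it is recalled from \cite[Theorem 4.1]{FGSGW}, with only a remark that the extension from no-slip to periodic boundary conditions is straightforward. Your sketch correctly reproduces the relative-energy/Gronwall strategy of that reference's proof --- testing the continuity and momentum identities of Definition~\ref{def_dmvs} with $\tfrac12|\vU|^2-\Hc'(r)$ and $\vU$, cancelling against the strong solution's equations, absorbing the concentration remainder $R^M$ into $\mathcal{D}$ via the $\xi\in L^1$ bound, and concluding concentration of $\Nu_{\tau,x}$ from strict convexity of the total energy --- so it is consistent with the argument the paper relies on by citation.
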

We refer the interested readers to \cite{FGSGW} for further  discussion about DMV solutions to the compressible Navier--Stokes equations.
\begin{remark}
The DMV weak--strong uniqueness result was originally presented for the no-slip boundary conditions. Note that it can be extended for the periodic boundary conditions in a  straightforward manner. 
\end{remark}
%As already mentioned above, in Section~\ref{sec:error} we shall employ the  relative energy functional to analyze the error between the approximate and exact solutions to \eqref{strong_ns}. Hence, we recall that the relative energy functional 
%\[
%\mathfrak{E}(\vr, \vu| r, \vU) = \intO{\left( \frac12 \vr \abs{\vu- \vU}^2 + \mathbb{E}(\vr|r)  \right)},   \mbox{ with } \mathbb{E}(\vr|r)= \Hc(\vr) - \Hc'(r) (\vr -r ) -\Hc(r),
%\]
%Here $\Hc$ represents the pressure potential of the fluid,
%\[
%\Hc(\vr) = \vr \int_1^{\vr} \frac{p(z)}{z^2} dz,  
%\]
%satisfying
%$ \vr \Hc'(\vr) -\Hc(\vr) =p(\vr),$ $\Hc''(\vr) = \frac{p'(\vr)}{\vr}$, see, for instance, \cite{FJN}.
%%{\cred For completeness and comparison I would add the exact continuous relative energy inequality. What do you think? Good idea!}

\subsection{The numerical scheme}
We are now ready to introduce a novel implicit in time Marker-And-Cell (MAC) finite difference  scheme originally proposed by Ho\v{s}ek and She~\cite{HS_MAC}. The original scheme was based on the set of point values on the centers of the elements and edges. Here we slightly reformulate the scheme such that the discrete problem hold on the whole domain thanks to the piecewise constant extension defined in~\eqref{PCE}. 
\begin{defi}[MAC scheme]
Given the discrete initial values 
\begin{align*}\label{IC}
\left( \varrho^0_h, \avu_h^0 \right)= \left( \Pim \varrho_0, \Pim \vu_0\right)
\end{align*}
we seek the solution $ (\vr_h^n,\vu_h^n ) \in \Xspace \times \Yspace$ satisfying
\begin{subequations}\label{scheme}
\begin{equation}\label{scheme_D}
D_t \vrh^n + \di_{\Up} [\varrho^n_h, \vu_h^n] - h^\alpha \Lapm \vrh^n= 0,
\end{equation}
\begin{multline}\label{scheme_M}
 D_t \avs{ \vrh^n \Ov{\uih^n} }^{(i)} + \avs{\di_\Up[\vrh^n \Ov{\uih^n}, \vu_h^n] }^{(i)}
+ \pdedgei p(\vrh^n)
- \mu \Lape \uih^n - (\mu+\lambda) \pdedgei \dih \vuh^n
\\= h^\alpha  \sumj \avs{ \pdmesh^{(j)} \left( \avs{\auih^n}^{(j)}  (\pdedge^{(j)} \vrh^n)  \right) }^{(i)} ,
\end{multline}
for all $i=1,\ldots, d$, and for all $n = 1,\dots, N_t,$ with the parameter $\alpha$ satisfying 
\begin{equation}\label{Choice_alpha}
\alpha\in (1, 2\gamma- d/3)\; \text{ for } \gamma\in(1,2), \quad \text{and }\;
\alpha>1 \; \text{ for } \gamma \geq 2.
\end{equation}
\end{subequations}
\end{defi}
 \noindent We recall from \cite{HS_MAC} the important properties of the scheme \eqref{scheme}:
\begin{itemize}
\item \textnormal{\bf Existence of solution to \eqref{scheme}.}\\
Let $ (\vr_h^0,\vu_h^0) \in \Xspace \times \Yspace$ be such that $ \vr_h^0 >0 $ (that is $\vr_K^0 >0 $ for any $K \in \mesh$). Then there exists a solution $(\vr_h,\vu_h)=\{(\vr_h^n,\vu_h^n)\}_{n=1}^{N_t} \in \Xspace \times \Yspace $ to the scheme \eqref{scheme}. We refer the readers to \cite[Theorem 3.7]{HS_MAC} for the proof. 
%, see also . 
%\\  We omit the proof here as it can be done exactly in the same way as in~\cite[Proposition A.1]{GallouetMAC}.

%\mydele{For the proof, we refer the readers to the Appendix~\ref{appendix_existence}.}
    \item \textnormal{\bf Discrete conservation of mass.} \\
    Summing \eqref{scheme_D} over $K \in \mesh$ immediately yields the conservation of mass, i.e.,
\begin{equation*}%\label{masscons}
	\int_\Omega \vr^n_h \dx = \int_\Omega \vr^0_h \dx = M_0, \quad n=1,\ldots, N_t.
\end{equation*}
Indeed, it is a simple consequence of \eqref{pp} with $\phi \equiv 1$ and Corollary~\ref{C:intdivup0}.
\item \textnormal{\bf Positivity of discrete density.}\\
Any solution $(\vr_h^n,\vu_h^n) \in \Xspace \times \Yspace $ to \eqref{scheme} satisfies $ \vr^n_h > 0$ provided $ \vr_h^{n-1}>0,$  $n=1,\ldots,N_t.$  \\ See \cite[Lemma 3.2]{HS_MAC} for the  proof. 
\end{itemize}

\subsection{Main results}
The first main result is the convergence to the DMV and strong solutions on the lifespan of the latter. 
\begin{thm}[Convergence]\label{thm_convergence}
Let $\{(\vr_h^n,\vuh^n)\}_{n=1}^{N_T}$ be a family of numerical solutions obtained by the scheme \eqref{scheme} with $\Delta t \approx h$ for all $\gamma >1$.  %and $\alpha$ satisfy \eqref{Choice_alpha}.   
Let the initial data $(\vr_0, \bfu_0)$  satisfy \eqref{initial_assumption}. Then, we have the following convergence results:
\begin{itemize}
\item Any Young measure $\{ {\Nu_{t,x}}\}_{t,x\in(0,T)\times\Omega} $ generated by $(\vr_h^n,\vuh^n)$ for $h\rightarrow 0$ represents a DMV solution of the Navier--Stokes system (\ref{strong_ns}) in the sense of Definition~\ref{def_dmvs}. 
\item In addition, suppose that the Navier--Stokes system (\ref{strong_ns}) endowed with the initial data $(\vr_0, \bfu_0)$ and periodic boundary conditions admits a regular solution $(\vr,\bfu)$ belonging to the class 
\begin{equation*}
\vr, \Grad  \vr, \bfu, \Grad  \bfu \in C([0,T]\times {\Omega}),\ \partial_t \vu \in L^2\left(0,T;C( {\Omega};R^d) \right),\ \vr>0. 
\end{equation*}
Then
\begin{equation*}
\vr_h \rightarrow \vr \mbox{ (strongly) in } L^{\gamma}\left((0,T) \times \Omega \right),\
\vu_h \rightarrow \vu \mbox{ (strongly) in } L^2\left((0,T)\times \Omega; R^d \right).
\end{equation*}
\end{itemize}
\end{thm}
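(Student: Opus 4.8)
The plan is to follow the two-step strategy announced in the introduction. \emph{Step 1: stability and Young measure generation.} First I would collect the uniform a priori bounds furnished by the discrete energy inequality (recalled in Section~\ref{sec:convergence}): $\vrh$ bounded in $L^\infty(0,T;L^\gamma(\Omega))$, $\sqrt{\vrh}\,\vuh$ bounded in $L^\infty(0,T;L^2(\Omega))$, the viscous term $\Gradedged\vuh$ bounded in $L^2((0,T)\times\Omega)$, together with control of the artificial-diffusion dissipation, which scales like a positive power of $h$. Via the Sobolev--Poincar\'e inequality of Lemma~\ref{lem_sobolev} these bounds upgrade to $\vuh$ bounded in $L^2(0,T;L^6(\Omega))$. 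The fundamental theorem of Young measures then yields a (non-relabelled) subsequence of $(\vrh,\vuh)$ generating a parametrized measure $\{\Nu_{t,x}\}$, which is the candidate DMV solution.

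\emph{Step 2: consistency.} Next I would test the discrete continuity equation~\eqref{scheme_D} against $\Pim\phi$ and the discrete momentum equation~\eqref{scheme_M} against $\PiE\bfPhi$, for arbitrary smooth $\phi$ and $\bfPhi$, and rewrite each term using the integration-by-parts and convection identities~\eqref{conv1}--\eqref{conv2}, \eqref{div_equiv}, together with the projection estimates of Lemma~\ref{NC}. The goal is to show that every discretization error and every artificial-diffusion contribution is $\order(h^\beta)$ for some $\beta>0$ on the admissible range~\eqref{Choice_alpha}, hence vanishes as $h\to0$. Passing to the limit, the nonlinear convective and pressure terms converge to their barycentric expectations against $\Nu_{t,x}$ up to a concentration measure $R^M$, so the limit satisfies the weak continuity and momentum balances of Definition~\ref{def_dmvs}; passing to the limit in the discrete energy inequality produces the dissipation defect $\mathcal{D}$, and its domination of $R^M$ follows from the common origin of the two defects in the pressure and kinetic energy.

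\emph{Step 3: weak--strong uniqueness and strong convergence.} For the second assertion, since the hypotheses grant a regular solution $(\vr,\vu)$ in exactly the class required by Theorem~\ref{thm_ws_principle}, and since the initial states coincide by construction of the discrete initial data, that theorem forces $\mathcal{D}=0$ and $\Nu_{t,x}=\delta_{(\vr(t,x),\vu(t,x))}$ for a.a.\ $(t,x)$. A Dirac-valued Young measure means $(\vrh,\vuh)$ converges to $(\vr,\vu)$ in measure; combined with the uniform integrability from the energy bounds ($\gamma>1$ for the density, $L^2L^6$ for the velocity) this upgrades to the stated strong convergence $\vrh\to\vr$ in $L^\gamma$ and $\vuh\to\vu$ in $L^2$.

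The hard part will be the consistency analysis of the momentum equation on the staggered grid. The convective term $\avs{\divup[\vrh\Ov{\uih},\vuh]}^{(i)}$ and the discrete acceleration $\pdt\avs{\vrh\Ov{\uih}}^{(i)}$ couple the primary, dual and bidual grids through the averaging, upwind and difference operators, so I must carefully estimate the commutators between $\Ov{\uih}$ on the primary grid and $\uih$ on the dual grid and show that each resulting mismatch carries a positive power of $h$; this is where the inverse estimate~\eqref{inv_est}, the trace inequality~\eqref{ineq_trace} and identity~\eqref{conv2} do the work. The second delicate point is the balancing of the artificial-diffusion terms: the exponent $\alpha$ in~\eqref{Choice_alpha} must be large enough that the $\ha$ terms disappear in the consistency limit, yet the diffusion must still supply the density regularity needed to pass to the limit in the pressure, and reconciling these two demands for all $\gamma>1$ is the crux of the argument.
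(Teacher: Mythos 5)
Your proposal is correct and follows essentially the same route as the paper: energy stability and the resulting uniform bounds (including the $h$-dependent $L^2L^2$ density and momentum estimates needed to absorb the staggered-grid commutators and the $h^\alpha$ diffusion terms), the consistency formulation via the identities \eqref{conv1}--\eqref{conv2} and Lemma~\ref{NC}, generation of a Young measure, passage to the limit with a dissipation defect dominating the concentration measure, and finally Theorem~\ref{thm_ws_principle} to upgrade to strong convergence. The only detail worth noting is that the paper isolates the quantitative density bounds in Lemma~\ref{lem_est_l2l2} precisely to carry out the balancing of $\alpha$ against $\gamma$ that you correctly identify as the crux.
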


Note that the existence of the strong solution has been reported in \cite{Cho}. Further, assuming the existence of a more regular strong solution and ``large" values of $\gamma >\frac{d}{2}$, we deduce the following convergence rate.  
\begin{thm}[Convergence rate]\label{thm_error_estimates}
Let $\gamma \geq \frac{d}2$. Let $(r,\vU)$ be a strong solution to the Navier--Stokes system \eqref{strong_ns} which belongs to the class
\begin{equation}\label{ST_class}
r\in C^2([0,T]\times \Omega),\ \underline{r} \leq  r(t,x) \leq \Ov{r}, \quad \vU \in C^2([0,T]\times \Omega;R^d).
\end{equation}
Then under the assumptions of Theorem~\ref{thm_convergence} there exists a positive number 
$$c=c(M_0, E_0, \underline{r}, \overline{r}, |p'|_{C^1([\underline{r},\overline{r}])}, \|(\Grad r, \pd_t r, \pd_t \Grad r, \pd_t^2 r, \vU, \Grad \vU, \Grad^2 \vU, \pd_t\vU, \pd_t \Grad \vU)\|_{L^\infty((0,T)\times \Omega)})$$
depending tacitly also on $T,$  $\gamma,$ \textnormal{diam}$(\Omega)$, $|\Omega|$, such that there holds 
\begin{equation*}
\begin{aligned}
&\sup_{0\leq n \leq N}\mathfrak{E}(\vr_h^n,\vu_h^n | r_h^n, \vUh^n) 
+ \TS \sum_{n=1}^N \frac{\mu}{2}\intO{|\Gradedged(\vu_h^n-\vU_h^n)|^2 }
+ \TS \sum_{n=1}^N (\mu+\lambda) \intO{|\dih(\vu_h^n-\vU_h^n)|^2 }
 \\
&\leq c \left( h^A + \sqrt{\TS} + \mathfrak{E}(\vr_h^0, \vu_h^0 | r(0), \vU(0))\right) ,
\end{aligned}
\end{equation*}
where the convergence rate reads 
\begin{equation}\label{A}
A=\min \left\{ \frac{2\gamma-d}{\gamma},  \frac{1}{2} \right\}.
\end{equation}
\end{thm}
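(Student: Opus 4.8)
The plan is to follow the four-step programme announced in the introduction: establish a discrete relative energy inequality inherent to \eqref{scheme}, insert the projected strong solution as discrete test data, compare against the identities satisfied by $(r,\vU)$, and close the estimate by a discrete Gronwall argument. Throughout I would take $(r_h,\vU_h)=(\Pim r,\PiE\vU)$ as test data (so that Lemma~\ref{NC} applies), and I would rely on the uniform bounds furnished by the energy stability of \eqref{scheme}, in particular $\norm{\vr_h}_{L^\infty L^\gamma}\aleq 1$ and the numerical-diffusion bound $\ha\norm{\Gradedge\vr_h}_{L^2L^2}^2\aleq 1$.

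First I would derive the discrete relative energy inequality. Starting from the energy balance of the scheme (the discrete kinetic-plus-internal-energy identity, which also produces the numerical dissipation coming from upwinding and from the $\ha$-diffusion), I would differentiate in time the cross terms in the algebraic decomposition
\[
\begin{aligned}
\mathfrak{E}(\vr_h,\vu_h\,|\,r_h,\vU_h)
&= \intO{\Big(\tfrac12\sumi\avsi{\vr_h}\abs{\uih}^2+\Hc(\vr_h)\Big)}
-\intO{\Hc'(r_h)(\vr_h-r_h)}-\intO{\Hc(r_h)}\\
&\quad -\intO{\sumi\avsi{\vr_h}\,\uih\,\Uih}
+\intO{\tfrac12\sumi\avsi{\vr_h}\abs{\Uih}^2}.
\end{aligned}
\]
The discrete time derivatives of the three cross terms are evaluated by testing the momentum update \eqref{scheme_M} with $\vU_h$ and the mass update \eqref{scheme_D} with $\tfrac12\abs{\vU_h}^2$ and with $\Hc'(r_h)$, using the summation-by-parts formulae of Lemma~\ref{lem_ibp}, the upwind splitting \eqref{up_avg}, and the structural relations $z\Hc'(z)-\Hc(z)=p(z)$, $\Hc''(z)=p'(z)/z$; the viscous terms then recombine into the dissipation of $\vu_h-\vU_h$. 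The discrete product rule $D_t(fg)^n=f^n(D_tg)^n+g^{n-1}(D_tf)^n$, together with the time shift $g^{n-1}=g^n-\TS(D_tg)^n$, generates the genuinely first-order-in-time residuals, one source of the $\sqrt{\TS}$ term.

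The outcome should be an inequality of the schematic form
\[
\mathfrak{E}^N+\TS\sum_{n=1}^N\Big(\tfrac{\mu}{2}\norm{\Gradedged(\vu_h^n-\vU_h^n)}_{L^2}^2+(\mu+\lambda)\norm{\dih(\vu_h^n-\vU_h^n)}_{L^2}^2\Big)\le \mathfrak{E}^0+\TS\sum_{n=1}^N\mathcal R_h^n,
\]
where the remainder $\mathcal R_h^n$ splits into two families. The first consists of terms pointwise dominated by the relative energy, $\aleq\mathfrak{E}(\vr_h^n,\vu_h^n\,|\,r_h^n,\vU_h^n)$; here one uses the convexity of $\Hc$, the bounds \eqref{ST_class} on $(r,\vU)$, and the split of the density into an essential part (where $\mathbb{E}(\vr_h|r_h)\approx\abs{\vr_h-r_h}^2$) and a residual part (where it controls $\vr_h^\gamma+1$). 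The second family is the consistency error measuring the failure of $(r_h,\vU_h)$ to satisfy \eqref{scheme} exactly; it is treated with Lemma~\ref{NC}, the consistency identities \eqref{conv1}--\eqref{conv2} and \eqref{div_equiv}, and the $C^2$ regularity \eqref{ST_class}.

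The hard part will be to bound this consistency error by $c(h^A+\sqrt{\TS})+c\,\mathfrak{E}^n$ with the sharp exponent $A=\min\{(2\gamma-d)/\gamma,\tfrac12\}$. The delicate contributions carry the pressure and the convective flux: after inserting the projection estimates \eqref{NC2}--\eqref{NC3} they reduce to products such as $h\,\norm{\cdot}_{C^2}\intO{(\text{powers of }\vr_h)}$ together with the upwind and $\ha$-diffusion defects $\tfrac h2\abs{v_\sigma}(\pdedgei\vr_h)$ and $\ha\Gradedge\vr_h$. These can be controlled only through the uniform energy bounds and the Sobolev--Poincar\'e inequality of Lemma~\ref{lem_sobolev}; trading the available $L^\gamma$ control of the density for the higher integrability these products demand forces the inverse estimate \eqref{inv_est}, whose exponent $d(\tfrac1p-\tfrac1q)$ injects the dimension $d$. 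Balancing the resulting power of $h$ against the energy bound is precisely what produces $(2\gamma-d)/\gamma$, fixes the admissible range $\gamma\ge d/2$, and (together with the constraint \eqref{Choice_alpha} making the $\ha$-defect subcritical for $\gamma\in(1,2)$) keeps all residuals summable. Once $\mathcal R_h^n\le c(h^A+\sqrt{\TS})+c\,\mathfrak{E}^n$ is established, I would absorb the viscous dissipation on the left-hand side, apply the discrete Gronwall lemma to the remaining $\TS\sum c\,\mathfrak{E}^n$, and obtain the asserted bound with $\mathfrak{E}^0=\mathfrak{E}(\vr_h^0,\vu_h^0\,|\,r(0),\vU(0))$.
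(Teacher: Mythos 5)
Your proposal follows essentially the same route as the paper: a discrete relative energy inequality obtained by testing \eqref{scheme_D} with $\tfrac12(|\avU_h|^2-|\avu_h|^2)$ and $\Hc'(r_h)$ and \eqref{scheme_M} with $\vu_h-\vU_h$, insertion of $(\Pim r,\Pie\vU)$, comparison with the consistency identity for the strong solution (which the paper imports from Gallou\"et et al.), and a discrete Gronwall argument, with the exponent $A$ arising exactly as you describe from inverse estimates trading $L^\infty L^\gamma$ density control and from the upwind/artificial-diffusion defects. The only minor imprecision is the provenance of $\sqrt{\TS}$: in the paper it comes not from the $O(\TS)$ product-rule residuals but from Cauchy--Schwarz against the numerical dissipation $\TS^2\sum_n\intO{\vr_h^{n-1}|D_t\Ov{\vu}_h^n|^2}\aleq 1$, which only yields half an order in $\TS$.
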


\begin{remark}
Our Theorem~\ref{thm_error_estimates} states the same convergence rate as \cite[Theorem 3.2]{GallouetMAC}. However, we would like to point out that the reference \cite{GallouetMAC} requires an assumption on the asymptotic behaviour of the pressure while we do not need it due to the additional artificial diffusion.   
\end{remark}

\section{Proof of Theorem \ref{thm_convergence}: convergence}\label{sec:convergence}

 The strategy of employing the DMV solutions as a tool for the convergence analysis of a numerical scheme consists of two steps:
 \begin{itemize}[noitemsep,topsep=0pt,leftmargin=25pt]
\item[i)] showing that a sequence of approximate solutions generates a DMV solution 
\item[ii)]
proving convergence to  strong solution via the DMV weak--strong uniqueness principle.
\end{itemize}
Thanks to the DMV weak--strong uniqueness result derived in \cite[Theorem 4.1]{FGSGW} (see also Theorem \ref{thm_ws_principle}), for the proof of convergence of  numerical solutions towards the strong solution it suffices to show that a sequence of solutions to the proposed MAC scheme \eqref{scheme} generates a DMV solution in the sense of Definition~\ref{def_dmvs}. 
%  Note that the latter  is a consequence of the former, thanks to the weak--strong uniqueness principle in the class of DMV solutions~\cite[Theorem 4.1]{FGSGW}. Therefore, 
%for the proof of Theorem~\ref{thm_convergence} 
%It suffices to show that letting $h\approx\Delta t \rightarrow 0$ the numerical solutions generate a DMV solution in the sense of Definition~\ref{def_dmvs}.
To this end we shall prove the essential properties:
energy stability and consistency of the scheme. We recall some of the necessary estimates from \cite{HS_MAC}, where the stability estimates and the consistency formulation of the MAC scheme \eqref{scheme} in the case of the no-slip boundary condition  were derived for the adiabatic coefficient $\gamma \in(1,2)$. 
Note that the space--periodic  setting studied in the present paper causes no major difference in the proof. 
 The main difference lies in applying the Sobolev--Poincar\' e--type inequality to bound the discrete velocity in $L^2(0,T; L^6(\Omega))$, see~\eqref{ineq_sobolev}. A second difference is to complement the proof also for $\gamma \geq 2$. 

\subsection{Energy stability}
The essential feature of any numerical scheme is its stability. We now recall the energy inequality derived for the scheme \eqref{scheme} in the recent work  of Ho\v sek and She \cite{HS_MAC}.
\begin{lemma}\label{thm_stability}(\cite[Theorem 3.5]{HS_MAC})
Let  $p$ satisfy the pressure law \eqref{assumption_p}, and let $(\vrh, \vuh)$ be a numerical solution obtained by the scheme \eqref{scheme}. Then, for all $m = 1, \dots, N_t,$ it holds that
\begin{equation}\label{energy_estimate}
\intO{ \left( \frac12 \vrh^m  \abs{\Ov\vu^m_h}^2 + \Hc(\vrh^m) \right) }  + \mu \Delta t \norm{\Gradedged \vu_h^m}_{L^2(\Omega)} + (\mu +\lambda) \Delta t \norm{\dih \vu_h^m}_{L^2(\Omega)} 
 + \sum_{j=1}^4 N_j^{m}  
\leq   E_0 ,
\end{equation}
where  
$\displaystyle E_0=\intO{\Big(\frac12 \vrh^0 |\Ov\vu^0_h|^2 + \Hc(\vrh^0)  \Big)}$ and $\displaystyle  N_j^{m} \geq 0$ with
\begin{align*}
 N_1^m &=  \Delta t \sum_{n=1}^m \sumi \sum_{\sigma \in \edgesi} \int_{D_\sigma} \bigg( (h^\alpha + h|u^n_{i,\sigma}|)  \Hc''(\vr^n_{h,\dagger}) \abs{\pdedgei \vrh^n}^2 \bigg) \dx , \\
 N_2^m &=  (\Delta t)^2 \sum_{n=1}^m \intO{ \frac{\Hc''(\vrh^{n-1,n})}{2} |\pdt \vrh^n|^2 } ,  \\
 N_3^m &= (\Delta t)^2  \sum_{n=1}^m \intO{\frac{\vr^{n-1}_h}{2}  \abs{D_t \Ov{\vu_h^n}}^2} , \\
 N_4^m &= \frac{1}{4} \TS\, h  \sum_{n=1}^m \sumfaceint \int_{D_\sigma} \vrh^{n,up}|\udn|\; \abs{\Gradedge \Ov{\vuh^n}}^2 \dx.
\end{align*}
Here $\vrh^{n-1,n} \in \co{\vrh^{n-1}, \vrh^{n}}$, 
$\vr^n_{h,\dagger} \in  \co{\vr^{n}_K, \vr^{n}_L}$ for any $\sigma=K|L$ are the remainder terms from the Taylor expansions. 
\end{lemma}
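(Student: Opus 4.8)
The plan is to run the classical \emph{discrete energy method}: derive an internal-energy balance by testing the discrete continuity equation \eqref{scheme_D} with the pressure potential $\Hc'(\vrh^n)$, derive a kinetic-energy balance by testing the discrete momentum equation \eqref{scheme_M} with the velocity $\uih^n$ and summing over $i$, then add the two identities so the pressure contributions cancel, and finally sum over $n=1,\dots,m$ so the time-derivative terms telescope down to the energy at level $m$ minus $E_0$. All dissipative remainders $N_j^m$ are generated along the way; since each is nonnegative, dropping them yields the asserted inequality.

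For the internal energy I would multiply \eqref{scheme_D} by $\Hc'(\vrh^n)$ and integrate over $\Omega$. The time term $\intO{\pdt\vrh^n\,\Hc'(\vrh^n)}$ is handled by a second-order Taylor expansion of $\Hc$: using convexity ($\Hc''\geq0$) it equals $\intO{\pdt\Hc(\vrh^n)}$ plus exactly the nonnegative remainder $N_2^n$, with intermediate point $\vrh^{n-1,n}\in\co{\vrh^{n-1},\vrh^n}$. The artificial-diffusion term $-\ha\Lapm\vrh^n$ tested against $\Hc'(\vrh^n)$ gives, after the integration-by-parts formula \eqref{pp} and a further edgewise Taylor expansion of $\Hc'$, the $\ha$-part of $N_1^n$. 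The upwind term $\divup[\vrh^n,\vuh^n]$ tested against $\Hc'(\vrh^n)$ produces, via \eqref{grad_div} and the decomposition \eqref{up_avg}, a pressure-flux term together with the $h|u^n_{i,\sigma}|$-part of $N_1^n$; in both pieces the diffusion coefficient $\ha+h|u^n_{i,\sigma}|$ appears explicitly multiplied by the convex factor $\Hc''(\vr^n_{h,\dagger})$, which is what makes $N_1^n\geq0$.

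The kinetic-energy balance is the delicate step. Testing \eqref{scheme_M} with $\uih^n$ and using the discrete continuity equation \eqref{scheme_D} to re-express $\pdt\avs{\vrh^n\Ov{\uih^n}}^{(i)}$ against $\uih^n$, one passes from the momentum time-derivative to $\pdt$ of the kinetic energy $\tfrac12\vrh^n|\Ov{\vuh^n}|^2$ plus the nonnegative remainder $N_3^n$ -- the discrete analogue of $a(a-b)=\tfrac12(a^2-b^2)+\tfrac12(a-b)^2$, performed on the \emph{staggered} grid where the cell density must be reconstructed on the dual cells to weight the edge velocity. Since \eqref{scheme_D} carries both the upwind term and the artificial diffusion $-\ha\Lapm\vrh^n$, this conversion generates extra contributions: the upwind part assembles, together with the convective flux $\avs{\divup[\vrh^n\Ov{\uih^n},\vuh^n]}^{(i)}$, into the sign-definite $N_4^n$, while the $\ha$-part is precisely what the artificial momentum diffusion on the right-hand side of \eqref{scheme_M} is tailored to compensate. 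The viscous terms contribute $\mu\norm{\Gradedged\vuh^n}_{L^2(\Omega)}^2+(\mu+\lambda)\norm{\dih\vuh^n}_{L^2(\Omega)}^2$ through \eqref{pp}, and the pressure gradient $\pdedgei p(\vrh^n)$ tested with $\uih^n$ cancels the pressure flux from the internal-energy balance by \eqref{grad_div}.

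Adding the two balances, summing over $n=1,\dots,m$, and telescoping the $\pdt\Hc$ and $\pdt(\tfrac12\vrh|\Ov\vu|^2)$ terms leaves the energy at level $m$ minus $E_0$, plus $\sum_{j=1}^4 N_j^m$, equal to zero; because every $N_j^m\geq0$ the inequality follows. The main obstacle is the convective and temporal algebra on the staggered grid: matching the edge-based velocity $\uih^n$ with the cell-based density $\vrh^n$ so that the reconstructed kinetic energy telescopes exactly, so that the leftover upwind flux assembles into the sign-definite $N_4^n$, and so that the spurious $\ha$-contribution produced by using the diffusive mass balance in the conversion is annihilated by the right-hand side of \eqref{scheme_M}. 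This forces the continuity equation to play a double role -- in the $N_3^n$ conversion and in the convective cancellation -- and the bookkeeping of these cross terms, rather than any single estimate, is the technically demanding part.
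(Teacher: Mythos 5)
Your outline is correct and is essentially the argument of the cited reference: the paper itself does not reprove this lemma but imports it from \cite[Theorem 3.5]{HS\_MAC}, and the same algebra (renormalizing the mass update with $\Hc'$, testing the momentum update with the velocity, cancelling the pressure via $\vr\Hc'(\vr)-\Hc(\vr)=p(\vr)$, and collecting the sign-definite numerical dissipation terms $N_1,\dots,N_4$) reappears almost verbatim in the paper's own proof of the discrete relative energy inequality in Lemma 4.4, of which the energy estimate is the special case $r_h=\mathrm{const}$, $\vU_h=0$. Your identification of where each $N_j$ originates and of the double role of the continuity equation in the kinetic-energy conversion matches the paper's treatment of the terms $I_1+I_7$, $I_4$, $I_2+I_8$ and $I_3+I_{11}$ there.
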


\subsubsection{Uniform bounds}
The total energy inequality \eqref{energy_estimate} implies the following \emph{a priori} estimates.
%From the total energy inequality \eqref{energy_estimate} %%and Sobolev inequality Lemma \eqref{lem_sobolev}, 
%one gets the following a priori estimates. 
\begin{coro}[Uniform bounds]\label{ests}
Let $(\vrh, \vuh)$ be the solution to the scheme \eqref{scheme} with the pressure satisfying \eqref{assumption_p}. 
Then there exists $c>0$ dependent on the initial mass $M_0$ and energy $E_0$ but independent of the parameters $h$ and $\TS$ such that
\begin{subequations}\label{ests1}
\begin{align} 
&
\norm{\vrh \avu_h^2}_{L^{\infty}L^1}  \aleq 1, \quad 
\norm{\vrh}_{L^{\infty}L^\gamma} \aleq 1, \quad 
 \norm{\vrh\avu_h}_{L^\infty L^{\frac{2\gamma}{\gamma+1}}}  \aleq 1,
\label{est_ener}
\\&
\norm{\dih \vu_h}_{L^2L^2} \aleq 1, \quad 
\norm{\Gradedged \vuh}_{L^2 L^2}  \aleq 1,\quad  
\norm{\vuh}_{L^{2}L^6}  \aleq 1, \quad
\label{est_u}
\\&
\Delta t  \int_0^T \intO{\vrh(t-\TS) |\pdt \Ov{\vu_h}|^2} \dt \aleq 1, 
\label{est_dtu}
\\&
 h \int_0^T \sumfaceint \int_{D_\sigma} \vrh^{up}|\vuh \cdot \vn |\; \abs{\Gradedge \Ov{\vuh}}^2 \dx\dt \aleq 1 , \label{est_r_ujump}
\\&
 \int_0^T   \intO{ \bigg( (h^\alpha + h|u_{\sigma}|)  \Hc''(\vr_{h,\dagger}) \abs{\Gradedge \vrh}^2 \bigg) } \dt   \aleq 1,\label{est_adf}
\end{align}
\end{subequations}
where $\vr_{h,\dagger} \in \co{\vr_K, \vr_L}$ for any $\sigma=K|L \in \edgesint$.
\end{coro}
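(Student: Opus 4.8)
The goal is to derive the uniform bounds \eqref{ests1} as direct consequences of the energy inequality \eqref{energy_estimate} in Lemma~\ref{thm_stability}. The plan is to read off each estimate by isolating the corresponding nonnegative term from the left-hand side of \eqref{energy_estimate}, which is bounded uniformly by $E_0$. Since $\Hc(\vrh)\approx \vrh^\gamma$ (up to lower-order terms and constants coming from the primitive defining $\Hc$), the term $\intO{\Hc(\vrh^m)}$ immediately yields $\norm{\vrh}_{L^\infty L^\gamma}\aleq 1$, while $\intO{\frac12\vrh^m|\avu_h^m|^2}$ gives the kinetic energy bound $\norm{\vrh\avu_h^2}_{L^\infty L^1}\aleq 1$. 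Both hold for every $m$, hence in $L^\infty$ in time. The viscosity terms $\mu\TS\norm{\Gradedged\vu_h^m}_{L^2}$ and $(\mu+\lambda)\TS\norm{\dih\vu_h^m}_{L^2}$, once summed over $m$ (equivalently, read in the piecewise-constant-in-time interpretation), give the two dissipation bounds in \eqref{est_u}. The four nonnegative remainder terms $N_1^m,\dots,N_4^m$ supply the remaining estimates: $N_4^m$ gives \eqref{est_r_ujump}, $N_3^m$ gives \eqref{est_dtu}, and $N_1^m$ gives the artificial-diffusion bound \eqref{est_adf}, all after translating the sum $\TS\sum_{n=1}^m$ into the time integral $\int_0^T$.

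The one genuinely new step, and the main obstacle, is the momentum bound $\norm{\vrh\avu_h}_{L^\infty L^{\frac{2\gamma}{\gamma+1}}}\aleq 1$ and, relatedly, the velocity bound $\norm{\vuh}_{L^2 L^6}\aleq 1$; these do not come for free from the left-hand side of \eqref{energy_estimate} but require combining several of the basic bounds. For the momentum estimate I would use Hölder's inequality with the splitting $\vrh\avu_h=\vrh^{1/2}\cdot(\vrh^{1/2}\avu_h)$, bounding $\vrh^{1/2}$ in $L^{2\gamma}$ by $\norm{\vrh}_{L^\gamma}^{1/2}\aleq1$ and $\vrh^{1/2}\avu_h$ in $L^2$ by the square root of the kinetic energy $\norm{\vrh\avu_h^2}_{L^1}^{1/2}\aleq1$; the conjugate exponents $\frac{1}{2\gamma}+\frac12=\frac{\gamma+1}{2\gamma}$ produce exactly the Lebesgue exponent $\frac{2\gamma}{\gamma+1}$, uniformly in $m$.

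For the velocity bound $\norm{\vuh}_{L^2 L^6}\aleq1$ I would invoke the Sobolev--Poincar\'e--type inequality \eqref{ineq_sobolev} of Lemma~\ref{lem_sobolev}. That lemma requires a positive lower bound and a uniform $L^\gamma$ upper bound on a density; these are furnished by the conservation of mass ($\intO{\vrh}=M_0>0$, independent of $h$) and the already-established $\norm{\vrh}_{L^\infty L^\gamma}\aleq1$. Applying \eqref{ineq_sobolev} pointwise in time with $r_h=\vrh(t,\cdot)$ yields
\[
\norm{\vuh(t)}_{L^6}^2 \aleq \intO{|\Gradedged\vuh(t)|^2} + \left(\intO{\vrh(t)|\vuh(t)|}\right)^2.
\]
The first term integrates in time to $\norm{\Gradedged\vuh}_{L^2L^2}^2\aleq1$ by \eqref{est_u}. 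The second term is controlled by the $L^\infty$-in-time momentum bound just derived: since $\intO{\vrh|\vuh|}\leq\norm{\vrh\avu_h}_{L^{2\gamma/(\gamma+1)}}\norm{1}_{L^{2\gamma/(\gamma-1)}}\aleq1$ (using $\gamma>1$ and the finite measure $|\Omega|$), it is bounded uniformly in time and hence in $L^2(0,T)$. Integrating over $(0,T)$ then gives $\norm{\vuh}_{L^2L^6}\aleq1$. A technical point worth care is the mild discrepancy between the velocity unknowns appearing on the two grids, namely the edge-based $\vuh$ and the cell-averaged $\avu_h$; I would handle the transfer between $\intO{\vrh|\avu_h|^2}$ and quantities controlling $\vuh$ via the averaging estimates in Lemma~\ref{NC} together with the inverse and trace inequalities \eqref{inv_est}, \eqref{ineq_trace}, but this is routine bookkeeping rather than a conceptual difficulty.
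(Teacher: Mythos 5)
Your proposal is correct and follows essentially the route the paper intends: the paper offers no written proof of this corollary, presenting it as a direct consequence of the energy inequality \eqref{energy_estimate}, and the two genuinely non-immediate items you single out — the H\"older splitting $\vrh\avu_h=\vrh^{1/2}\cdot(\vrh^{1/2}\avu_h)$ for the momentum bound and the Sobolev--Poincar\'e--type inequality \eqref{ineq_sobolev} of Lemma~\ref{lem_sobolev} for $\norm{\vuh}_{L^2L^6}$ — are exactly the ingredients the authors point to (they explicitly name Lemma~\ref{lem_sobolev} as the main new step relative to the no-slip case). The only place deserving a touch more care than ``routine bookkeeping'' is the transfer between the face-based $\vuh$ and the cell-averaged $\avu_h$ inside $\intO{\vrh|\vuh|}$, since the discrepancy is of size $h\,\pdmeshi\uih$ and must be absorbed using the density's integrability; but you have correctly identified the issue and the tools, and the paper itself does not elaborate on it either.
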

Further, it is convenient to estimate the following %$\norm{\cdot}_{L^2L^2}$ and $\norm{\cdot}_{L^2L^{6/5}}$ 
norms of the density $\vrh$ and the momentum $\mathbf{m}_h\equiv\vrh\avu_h$. 
\begin{lemma}\label{lem_est_l2l2}
In addition to the assumption of Lemma \ref{thm_stability}, let $h\in(0,1)$. Then there hold
\begin{equation}\label{est_l2l2}
\begin{aligned}
\norm{\vrh}_{L^2 L^2 }  \aleq  h^\beta, \quad 
\beta = 
\begin{cases}
 \max\left\{ - \frac{3\alpha+d}{6\gamma}, \frac{\gamma-2}{2\gamma}d \right\}, & \mbox{if } \gamma \in(1,2), \\
0, &\mbox{if } \gamma \geq 2, 
\end{cases}
\\
\norm{\vrh}_{L^2 L^{6/5} }  \aleq  h^\zeta, \quad
\zeta= \begin{cases}
 \max\left\{- \frac{3\alpha+d}{6\gamma},  \frac{\gamma-2}{2\gamma}d, \frac{5\gamma -6}{6\gamma}d \right\}, & \mbox{if } \gamma \in(1,\frac65), \\
0, &\mbox{if } \gamma \geq \frac{6}{5},
\end{cases}
\\
\norm{\vrh \avu_h} _{L^2L^2} \aleq h^\beta, \quad 
\beta = 
\begin{cases}
{ - \frac{3\alpha+d}{6\gamma}}, & \mbox{ if } \gamma \in(1,2), \\
\frac{\gamma-3}{3\gamma}d, &\mbox{ if } \gamma \in [2,3), \\
0, &\mbox{ if } \gamma \geq 3. 
\end{cases}
\end{aligned}
\end{equation} 
\end{lemma}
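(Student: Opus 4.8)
The plan is to prove all three bounds through the same dichotomy. Whenever $\gamma$ is large enough that the target exponent already equals the exponent of the natural energy norm — that is $\gamma\ge 2$ for the $L^2L^2$ bound of $\vrh$, $\gamma\ge 6/5$ for its $L^2L^{6/5}$ bound, and $\gamma\ge 3$ for the momentum — the estimate is a free consequence of Corollary~\ref{ests} together with the finiteness of $|\Omega|$. For the $L^2L^2$ density bound with $\gamma\ge 2$ one uses $\norm{\vrh}_{L^2}\aleq\norm{\vrh}_{L^\gamma}$ and \eqref{est_ener}; the $L^2L^{6/5}$ bound for $\gamma\ge 6/5$ is identical with $L^2$ replaced by $L^{6/5}$. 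For the momentum I would use the Hölder splitting $\norm{\vrh\avu_h}_{L^2}\le\norm{\vrh}_{L^3}\norm{\avu_h}_{L^6}$ (legal since $\tfrac13+\tfrac16=\tfrac12$), so that $\norm{\vrh\avu_h}_{L^2L^2}\le\norm{\vrh}_{L^\infty L^3}\norm{\avu_h}_{L^2L^6}$, which is $\aleq 1$ by \eqref{est_ener} and \eqref{est_u} as soon as $\gamma\ge 3$.

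In the complementary small-$\gamma$ regime the natural control is only sub-$L^2$, so integrability must be gained, and each quoted exponent is the best (largest) one obtained from two competing routes. The crude route applies the inverse estimate \eqref{inv_est} directly to $\norm{\vrh}_{L^\infty L^\gamma}\aleq 1$: with $(p,q)=(2,\gamma)$ this produces the exponent $\tfrac{(\gamma-2)d}{2\gamma}$ for $\norm{\vrh}_{L^2L^2}$, with $(p,q)=(6/5,\gamma)$ the exponent $\tfrac{(5\gamma-6)d}{6\gamma}$ for $\norm{\vrh}_{L^2L^{6/5}}$, and, fed into the momentum splitting through $\norm{\vrh}_{L^\infty L^3}$ with $(p,q)=(3,\gamma)$, the exponent $\tfrac{(\gamma-3)d}{3\gamma}$ on $\gamma\in[2,3)$.

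The sharp route, which yields $-\tfrac{3\alpha+d}{6\gamma}$, is the heart of the proof and exploits the artificial-diffusion dissipation \eqref{est_adf}. Since $\Hc''(\vr)=p'(\vr)/\vr=a\gamma\vr^{\gamma-2}$, a discrete renormalization identity converts the weighted quantity $\Hc''(\vr_\dagger)\abs{\pdedgei\vrh}^2$ in \eqref{est_adf} into control of $\abs{\Gradedge(\vrh^{\gamma/2})}^2$, whence $h^\alpha\norm{\Gradedge(\vrh^{\gamma/2})}_{L^2L^2}^2\aleq 1$, i.e. $\norm{\Gradedge(\vrh^{\gamma/2})}_{L^2L^2}\aleq h^{-\alpha/2}$; the companion term $\norm{\vrh^{\gamma/2}}_{L^2L^2}^2=\int_0^T\norm{\vrh}_{L^\gamma}^\gamma\dt$ is bounded by \eqref{est_ener}. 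A discrete Sobolev--Poincar\'e inequality, the scalar primary-grid analogue of Lemma~\ref{lem_sobolev}, then upgrades $\vrh^{\gamma/2}$ to $L^2L^6$ and gives $\norm{\vrh}_{L^\gamma L^{3\gamma}}\aleq h^{-\alpha/\gamma}$. Interpolating this gain against $\norm{\vrh}_{L^\infty L^\gamma}\aleq 1$, and using \eqref{inv_est} in space together with its one-dimensional counterpart in time (licit because $\TS\approx h$ and the functions are piecewise constant in time) to make up the residual integrability, I would track the exponents to arrive at $-\tfrac{3\alpha+d}{6\gamma}$; the momentum bound on $\gamma\in(1,2)$ then follows from an analogous interpolation combining this improved density bound with \eqref{est_ener} and \eqref{est_u}.

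The main obstacle is precisely this sharp route. First, one must make the discrete chain rule rigorous: the Taylor remainder point $\vr_\dagger\in\co{\vr_K,\vr_L}$ is not a nodal value, so showing that $\Hc''(\vr_\dagger)\abs{\pdedgei\vrh}^2$ really dominates $\abs{\Gradedge(\vrh^{\gamma/2})}^2$ on each dual face requires a convexity/mean-value argument uniform in $h$. Second, Lemma~\ref{lem_sobolev} is stated for $\Yspace$ with the bidual gradient $\Gradedged$, so an $h$-independent discrete Sobolev inequality for a scalar function of $\Xspace$ with the dual gradient $\Gradedge$ must be established separately. Finally, the exponent bookkeeping across the interpolation and the inverse estimates is delicate; I note that the admissibility constraint \eqref{Choice_alpha}, namely $\alpha<2\gamma-d/3$ for $\gamma\in(1,2)$, is exactly what keeps $-\tfrac{3\alpha+d}{6\gamma}>-1$, so that $\vrh$ blows up strictly slower than $h^{-1}$ in $L^2L^2$, as the subsequent consistency and error analysis requires.
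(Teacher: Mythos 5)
Your proposal is correct and follows essentially the same route as the paper: the large-$\gamma$ cases by H\"older and the energy bounds, and the small-$\gamma$ cases as the maximum of the plain inverse-estimate exponent and the sharp exponent $-\frac{3\alpha+d}{6\gamma}$ obtained from the artificial-diffusion dissipation via the renormalization $\Hc''(\vr_{h,\dagger})\abs{\pdedgei\vrh}^2\gtrsim\abs{\Gradedge\vrh^{\gamma/2}}^2$ and a broken Sobolev inequality (which the paper imports from Chainais-Hillairet--Droniou rather than re-proving). The one detail where your described path deviates is the end of the sharp route: the paper needs no inverse estimate in time, since it first establishes the intermediate bound $\norm{\vrh}_{L^1L^\infty}\aleq h^{-(3\alpha+d)/(3\gamma)}$ (inverse estimate in space from $L^6$ to $L^\infty$ applied to $\vrh^{\gamma/2}$, then subadditivity of $x\mapsto x^{1/\gamma}$ and H\"older in time) and concludes with the elementary interpolation $\norm{\vrh}_{L^2L^2}^2\le\norm{\vrh}_{L^\infty L^1}\norm{\vrh}_{L^1L^\infty}$; that same $L^1L^\infty$ bound, paired with $\norm{\vrh\avu_h^2}_{L^\infty L^1}\aleq 1$, is precisely what yields the momentum estimate for $\gamma\in(1,2)$, so your ``analogous interpolation'' should be read as $\norm{\vrh\avu_h}_{L^2L^2}\le\norm{\vrh}_{L^1L^\infty}^{1/2}\norm{\vrh\avu_h^2}_{L^\infty L^1}^{1/2}$ rather than as a consequence of the final $L^2L^2$ density bound.
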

\begin{proof} First, for the case $\gamma \geq 2$, it is clear that the first estimate of  \eqref{est_l2l2} holds. Indeed, 
\[ \norm{\vrh}_{L^2L^2} \aleq \norm{\vrh}_{L^\infty L^{\gamma}} \aleq 1.  
%\mbox{ and }  
\]
Concerning the case $\gamma \in (1,2)$ we show the proof in two steps. 
On one hand a direct application of the inverse estimate \eqref{inv_est} leads to 
\[
\norm{\vrh}_{L^2 L^2 }    \aleq h^{d(\frac{1}{2} -\frac{1}{\gamma})} \norm{\vrh}_{L^\infty L^{\gamma}} \aleq h^{d(\frac{1}{2} -\frac{1}{\gamma})}. \]
On the other hand, we may start by recalling the Sobolev inequality for the broken norm~\cite[Lemma A.1]{Chainais} 
\[ \norm{f_h}_{L^6}^2 \aleq
\norm{f_h}_{L^2}^2 +  \norm{\Gradedge f_h}_{L^2}^2, \;  f_h\in \Xspace,
\]
and the algebraic inequality
\[ a \gamma \left(\vr_L^{\gamma/2} -\vr_K^{\gamma/2} \right)^2  \leq \frac{\pd ^2 \Hc (z) }{\pd \vr^2} (\vr_L -\vr_K)^2,\; \forall \; z \in \co{\vr_L, \vr_K}, \; \vr_L, \vr_K>0 \text{ provided } \gamma \in(1,2).
\]
Then the estimate of the density jumps \eqref{est_adf} indicates that
\[  \norm{\Gradedge \vrh^{\gamma/2}}_{L^2L^2}^2 =
\int_0^T  \intO{ |\Gradedge \vrh^{\gamma/2}|^2} \dt
\aleq \int_0^T  \intO{ \Hc''(\vr_{h,\dagger}) |\Gradedge \vrh|^2} \dt
 \aleq h^{-\alpha}.
 \]
Applying the above inequalities together with the inverse estimate \eqref{inv_est} and the estimate \eqref{est_ener} we derive
\begin{equation*}
\begin{aligned}
\norm{\vrh}_{L^1 L^\infty } &
= \int_0^T \norm{\vrh ^{\gamma/2}}_{ L^\infty }^{2/\gamma}  \dt
\leq \int_0^T \left( h^{-d/6} \norm{\vrh ^{\gamma/2}}_{ L^6 } \right) ^{2/\gamma}  \dt \\
&\leq h^{-d/(3\gamma)} \int_0^T \left( \norm{\vrh ^{\gamma/2}}_{ L^2 }^2
+ \norm{\Gradedge \vrh^{\gamma/2}}_{L^2} ^2  \right)^{1/\gamma}  \dt
\leq h^{-d/(3\gamma)}  \left( \norm{\vrh}_{ L^1 L^\gamma } + \norm{\Gradedge \vrh^{\gamma/2}}_{L^{\gamma/2}L^2}  ^{2/\gamma}  \right)\\
&\leq h^{-d/(3\gamma)}  \left( \norm{\vrh}_{ L^\infty L^\gamma } + \norm{\Gradedge \vrh^{\gamma/2}}_{L^2L^2}  ^{2/\gamma}  \right)
\aleq  h ^{ - \frac{3\alpha+d}{3\gamma}}.
\end{aligned}
\end{equation*}
Further application of the above inequality together with the Gagliardo--Nirenberg interpolation inequality, H\"older's inequality, and the density estimate stated in \eqref{est_ener} yields 
\begin{equation*}
 \norm{\vrh}_{L^2 L^2 } = \left( \int_0^T \norm{\vrh}_{L^2}^2 \dt  \right)^{1/2}
 \leq \left( \int_0^T \norm{\vrh}_{L^1}\norm{\vrh}_{L^\infty } \dt  \right) ^{1/2}
   \leq \norm{\vrh}_{L^\infty L^1}^{1/2} \norm{\vrh}_{L^1 L^\infty }^{1/2}
   \aleq  h^{ - \frac{3\alpha+d}{6\gamma}}.
\end{equation*}
Collecting the above results finishes the proof of the first estimate of \eqref{est_l2l2}.

Next, the second estimate of \eqref{est_l2l2} can be shown in the following way. 
First, it is obvious for $\gamma \geq \frac65$ that 
\[ \norm{\vrh}_{L^2L^{6/5}} \aleq \norm{\vrh}_{L^\infty L^\gamma} \aleq 1.
\]
Second, we show the proof for $\gamma \in(1,6/5)$ in two steps. 
On one hand, it is easy to observe that
\[\norm{\vrh}_{L^2L^{6/5}} \aleq \norm{\vrh}_{L^2 L^2} \aleq h^\beta_0, \; \beta_0 = \max\left\{- \frac{3\alpha+d}{6\gamma}, \frac{\gamma -2}{2\gamma}d \right\} .
\]
On the other hand, due to the inverse estimates \eqref{inv_est} we have
\[ \norm{\vrh}_{L^2L^{6/5}} \aleq \norm{\vrh}_{L^\infty L^{6/5}}  \aleq h^{d(\frac{5}{6}- \frac{1}{\gamma})} \norm{\vrh}_{L^\infty L^{\gamma}}
\]
 which completes the proof of the second estimate of \eqref{est_l2l2}. 

The last estimate of \eqref{est_l2l2} can be shown in the following way: if $\gamma \in(1,2)$
\begin{equation*}
 \norm{\vrh \avu_h} _{L^2L^2} \aleq
\norm{ \sqrt{\vrh} }_{L^2 L^\infty} \norm{ \sqrt{\vrh} \avu_h} _{L^\infty L^2}
=\norm{\vrh}_{L^1 L^\infty }^{1/2}  \norm{ \vrh \avu_h^2} _{L^\infty L^1} ^{1/2}
\aleq h^{ - \frac{3\alpha+d}{6\gamma}}.
\end{equation*}
In the case $\gamma \geq 3$, it follows by H\"older's inequality
\[
 \norm{\vrh \avu_h}_{L^2L^2} \aleq  \norm{\vrh}_{L^\infty L^3}  \norm{\vuh}_{L^2 L^6} \aleq \norm{\vrh}_{L^\infty L^\gamma} \norm{\vuh}_{L^2 L^6} \aleq 1.
\]
Finally for $\gamma \in [2,3)$, we have by inverse estimate \eqref{inv_est} and H\"older's inequality that 
\[ \norm{\vrh \avu_h}_{L^2L^2} \aleq  \norm{\vrh}_{L^\infty L^3}  \norm{\vuh}_{L^2 L^6} \aleq h^{d(\frac13 -\frac{1}{\gamma})}\norm{\vrh}_{L^\infty L^\gamma} \norm{\vuh}_{L^2 L^6} 
\aleq h^{d\frac{\gamma-3}{3\gamma}},
\]
which completes the proof. 
\end{proof}

Next we report the dissipation estimates on the density. 
\begin{lemma}(\cite[Lemma 4.3]{GallouetMAC})\label{lem_Ga}
For any $(\vrh, \vuh)$ satisfying the assumptions of Lemma \ref{thm_stability} there holds 
\begin{equation}\label{Ga1}
 \int_0^T  \sum_{ \sigma \in \edgesint } \int_{\sigma =K|L}{  \frac{ ( \vr_K -\vr_L )^2 }{\max\{ \vr_K, \vr_L \}}  \abs{\vuh \cdot \bfn }  } \dS \dt   \leq c
\end{equation}
for $c=c(\gamma, E_0)>0$ provided $\gamma \geq 2$.
%\begin{equation}\label{Ga2}
% \int_0^T  \sum_{ \sigma \in \edgesint } \int_{\sigma =K|L}{  \abs{\vuh \cdot \bfn } 
%  \left( \frac{\jump{\vrh}^2 }{\left( \max\{ \vr_K, \vr_L \} \right)^{2-\gamma} } 1_{ (\vr_{h,\dagger} \geq 1)} + \jump{  \vrh } ^2 1_{(\vr_{h,\dagger}<1)} \right)
%  } \dS \dt   \leq c
%\end{equation}
%$c=c(M_0, E_0)>0$ provided for $\gamma \in (1,2)$. 
\end{lemma}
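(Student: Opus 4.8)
The plan is to extract the required dissipation directly from the discrete continuity equation \eqref{scheme_D} by a \emph{renormalization} with the strictly convex entropy $\eta(\vr)=\vr\log\vr$, whose second derivative $\eta''(\vr)=1/\vr$ is exactly what generates the weight $1/\max\{\vr_K,\vr_L\}$ appearing in \eqref{Ga1}. Concretely, I would multiply \eqref{scheme_D} by $|K|\,\eta'(\vr_K^n)$ (legitimate since the scheme preserves $\vrh^n>0$), sum over $K\in\mesh$, and treat separately the three resulting groups of terms: the discrete time derivative, the upwind convective term, and the artificial diffusion.

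For the time term, convexity of $\eta$ gives $\eta'(\vr_K^n)\,D_t\vr_K^n\ge D_t\eta(\vr_K^n)$, hence $\sum_{K}|K|\eta'(\vr_K^n)D_t\vr_K^n\ge D_t\intO{\eta(\vrh^n)}$. For the artificial diffusion, the integration--by--parts formula \eqref{pp} together with the monotonicity of $\eta'$ yields $-h^\alpha\intO{\Lapm\vrh\,\eta'(\vrh)}=h^\alpha\intO{\Gradedge\vrh\cdot\Gradedge\eta'(\vrh)}\ge0$, so this term has the favourable sign and may be discarded. The heart of the matter is the convective term: using the integration--by--parts formulae \eqref{grad_div} and the elementary identity $s\big(\eta'(s)-\eta'(t)\big)=\big(F(s)-F(t)\big)+\big(\eta(s)-\eta(t)-\eta'(t)(s-t)\big)$ with the flux potential $F(\vr)=\vr\eta'(\vr)-\eta(\vr)=\vr$, I would split it as
\[
\intO{\divup[\vrh,\vuh]\,\eta'(\vrh)}=\intO{\dih\vuh\,\vrh}+\sum_{\sigma\in\edgesint}\int_\sigma|\vuh\cdot\bfn|\,\mathcal B_\sigma\,\dS,
\]
where $\mathcal B_\sigma=\eta(\vr^{\up}_\sigma)-\eta(\vr^{\mathrm{down}}_\sigma)-\eta'(\vr^{\mathrm{down}}_\sigma)(\vr^{\up}_\sigma-\vr^{\mathrm{down}}_\sigma)\ge0$ is the Bregman divergence generated by $\eta$, evaluated at the upwind and downwind densities across $\sigma=K|L$.

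Collecting the three contributions, multiplying by $\TS$ and summing over $n=1,\dots,N_t$ (the time term telescopes), I obtain
\[
\int_0^T\sum_{\sigma\in\edgesint}\int_\sigma|\vuh\cdot\bfn|\,\mathcal B_\sigma\,\dS\dt\le\intO{\eta(\vrh^0)}-\intO{\eta(\vrh^{N_t})}+\int_0^T\Big|\intO{\dih\vuh\,\vrh}\Big|\dt.
\]
The terminal term is controlled using $-\eta(\vr)\le1/\ee$, the initial term by the bound on $\vrh^0=\Pim\vr_0$ in $L^\gamma$, and the flux term by Cauchy--Schwarz, $\int_0^T|\intO{\dih\vuh\,\vrh}|\dt\le\norm{\dih\vuh}_{L^2L^2}\norm{\vrh}_{L^2L^2}\aleq1$, invoking $\norm{\dih\vuh}_{L^2L^2}\aleq1$ from \eqref{est_u} and, crucially, $\norm{\vrh}_{L^2L^2}\aleq1$ from Lemma~\ref{lem_est_l2l2}, which holds precisely because $\gamma\ge2$. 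Finally, the Taylor remainder form $\mathcal B_\sigma=\tfrac12\eta''(\xi_\sigma)(\vr_K-\vr_L)^2=\tfrac{(\vr_K-\vr_L)^2}{2\xi_\sigma}$ with $\xi_\sigma\in\co{\vr_K,\vr_L}$, combined with $\xi_\sigma\le\max\{\vr_K,\vr_L\}$, gives $\mathcal B_\sigma\ge\frac{(\vr_K-\vr_L)^2}{2\max\{\vr_K,\vr_L\}}$, and \eqref{Ga1} follows.

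The main obstacle is obtaining the sharp weight $1/\max\{\vr_K,\vr_L\}$ rather than the weaker $\Hc''$--weighted jump already available from the energy dissipation \eqref{est_adf}; the Taylor bound based on \eqref{est_adf} degenerates for small densities, so that estimate does not suffice. This is resolved by the specific choice $\eta''=1/\vr$. The only other delicate point is the uniform control of the flux term $\intO{\dih\vuh\,\vrh}$, which is exactly what forces the restriction $\gamma\ge2$ via $\norm{\vrh}_{L^2L^2}\aleq1$; for $\gamma<2$ this norm may blow up in $h$ (Lemma~\ref{lem_est_l2l2}) and a genuinely different argument would be needed.
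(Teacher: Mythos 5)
The paper does not actually prove this lemma; it defers to the cited reference \cite[Lemma 4.3]{GallouetMAC}, and your argument correctly reconstructs that proof: renormalize the discrete continuity equation with $\eta(\vr)=\vr\log\vr$, use convexity for the time term, extract the upwind Bregman dissipation whose Taylor remainder carries the weight $\eta''(\xi)=1/\xi\ge 1/\max\{\vr_K,\vr_L\}$, and absorb the flux term via $\big|\intO{\vr_h\,\dih\vu_h}\big|\le\norm{\vr_h}_{L^2L^2}\norm{\dih\vu_h}_{L^2L^2}\lesssim 1$, which is precisely where $\gamma\ge 2$ enters through Lemma~\ref{lem_est_l2l2}. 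The only adaptation needed beyond the cited reference -- whose scheme has no artificial diffusion -- is your observation that $-h^\alpha\intO{\Lapm\vr_h\,\eta'(\vr_h)}=h^\alpha\intO{\Gradedge\vr_h\cdot\Gradedge\eta'(\vr_h)}\ge 0$ by monotonicity of $\eta'$, so this term has the right sign and can be dropped; this is handled correctly.
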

%An inequality resulting from the dissipation of the upwind flux follows.
%\begin{lemma}(\cite[Corollary 4.1]{GallouetMAC})\label{lem_Ga2}
%Let $p$ satisfy the pressure law \eqref{assumption_p} with $\gamma \geq 2$. Then there exists  $c=c(\gamma, E_0)>0$, such that for any $(\vrh, \vuh)$ satisfying \eqref{est_adf} it holds 
%\begin{equation}\label{Ga2}
% \int_0^T  \sum_{ \sigma \in \edgesint } \int_{\sigma =K|L}{  ( \vr_K -\vr_L )^2   \abs{\vuh \cdot \bfn }  } \dS \dt   \leq c.
%\end{equation}
%\end{lemma}
The following lemma completes the list of useful estimates for the derivation of the consistency formulation.
\begin{lemma}\label{lem_S1}
Under the assumption of Lemma \ref{thm_stability} there exists $c=c(M_0, E_0)>0$  independent of $h$ and $\TS$ such that
\begin{equation}\label{S1}
 \int_0^T  \sum_{ \sigma \in \edgesint } \int_{\sigma =K|L}{\abs{\jump{\vrh}  {\vuh} \cdot\bfn}} \dS \dt   \leq c h^{\beta},
\end{equation}
where 
\[ \beta= \begin{cases}  -\frac12 & \mbox{ if }\gamma \geq \frac65, \\
-\frac12 + \frac{d}{2} \frac{5 \gamma -6}{6\gamma}, & \mbox{ if } \gamma \in (1,\frac65).
\end{cases}
%\quad \cred{
%\beta= \begin{cases}  0& \mbox{ if }\gamma \geq 2\\
%-\frac12 & \mbox{ if }\gamma \in [\frac65,2), \\
%-\frac12 + \frac{1}{2}d(\frac56 -\frac{1}{\gamma}) & \mbox{ if } \gamma \in (1,\frac65).
%\end{cases} }
\]
\end{lemma}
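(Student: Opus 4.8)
The plan is to bound the quantity
\[
\int_0^T \sum_{\sigma\in\edgesint} \int_{\sigma=K|L} \abs{ \jump{\vrh}\, \vuh \cdot \bfn } \dS \dt
\]
by recognizing the jump of $\vrh$ across a face as $h$ times the discrete edge gradient, i.e. $\jump{\vrh} = \vr_L - \vr_K = h (\pdedgei \vrh)_\sigma$ on $\sigma \in \edgesi$. The natural strategy is to split the work according to the value of $\gamma$, exploiting the two dissipation estimates already available: the artificial-diffusion bound \eqref{est_adf} which controls $\int_0^T \intO{ (h^\alpha + h|u_\sigma|) \Hc''(\vr_{h,\dagger}) |\Gradedge \vrh|^2 }\dt$, and the Gallou\"et-type bound \eqref{Ga1} (valid for $\gamma\geq 2$) which controls the weighted jump term with weight $1/\max\{\vr_K,\vr_L\}$.

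First I would apply the Cauchy--Schwarz inequality on the face integral, separating the integrand into a factor carrying the density-gradient dissipation and a factor carrying the remaining weights. Concretely, I would write
\[
\abs{\jump{\vrh}\,\vuh\cdot\bfn} = \sqrt{ h|u_\sigma|\,\Hc''(\vr_{h,\dagger})\,|\pdedgei\vrh|^2 }\cdot \sqrt{ \frac{h\,|\vuh\cdot\bfn|}{\Hc''(\vr_{h,\dagger})} },
\]
using $\jump{\vrh}=h(\pdedgei\vrh)_\sigma$, so that the first square-root factor integrates (via \eqref{est_adf}) to an $O(1)$ or $O(h^{-\alpha/2})$ bound and the leftover factor must be estimated using the energy bounds on $\vuh$ in $L^2 L^6$, the density bounds in $L^\infty L^\gamma$, and the trace inequality \eqref{ineq_trace} to pass between face integrals and cell integrals. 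For $\gamma \geq 2$ one may instead lean directly on \eqref{Ga1}: here $\Hc''(\vr) = a\gamma \vr^{\gamma-2}$ pairs naturally with the weight $1/\max\{\vr_K,\vr_L\}$, so a Cauchy--Schwarz split that puts the Gallou\"et jump term in one factor yields the clean rate $\beta = -1/2$. For $\gamma \in (1,\tfrac65)$ the estimate \eqref{Ga1} is unavailable, so the entire burden falls on \eqref{est_adf} combined with the sharper $L^2 L^{6/5}$ density estimate from Lemma~\ref{lem_est_l2l2} (whose exponent $\zeta$ contains precisely the term $\tfrac{5\gamma-6}{6\gamma}d$ appearing in the claimed rate), which is why the exponent degrades to $-\tfrac12 + \tfrac{d}{2}\tfrac{5\gamma-6}{6\gamma}$.

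After the Cauchy--Schwarz split I would sum over faces and over time, converting the per-face factors into the global space--time norms controlled by Corollary~\ref{ests} and Lemma~\ref{lem_est_l2l2}. The bookkeeping is to track powers of $h$ coming from three sources: the single factor of $h$ from $\jump{\vrh}=h\pdedgei\vrh$, the factor $h^{-1/p}$ arising each time the trace inequality \eqref{ineq_trace} is used to trade a boundary integral for a volume integral, and any inverse-estimate factors $h^{d(1/p-1/q)}$ from \eqref{inv_est} needed to reconcile the Lebesgue exponents. One then checks that the resulting exponent collapses to the stated $\beta$ in each regime, using $\vr_{h,\dagger}\in\co{\vr_K,\vr_L}$ to control $\Hc''$ from above and below in terms of $\vr_K,\vr_L$.

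The main obstacle, I expect, is the low-$\gamma$ case $\gamma\in(1,\tfrac65)$, where there is no weighted-jump dissipation to lean on and one must manufacture the bound purely from the weak artificial-diffusion control \eqref{est_adf} together with the velocity regularity. The delicate point is that $\Hc''(\vr_{h,\dagger})=a\gamma\vr_{h,\dagger}^{\gamma-2}$ \emph{degenerates} as $\vr\to\infty$ when $\gamma<2$, so the weight one would like to divide out in the Cauchy--Schwarz leftover factor can blow up; handling this requires carefully pairing the residual density powers against the $L^\infty L^\gamma$ and $L^2 L^{6/5}$ bounds and invoking H\"older with exponents matched to $\gamma$, and it is exactly this matching that produces the extra $\tfrac{d}{2}\tfrac{5\gamma-6}{6\gamma}$ loss in the convergence rate. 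Getting the H\"older exponents and the trace/inverse-estimate powers to combine into precisely the claimed $\beta$ is the computation that demands the most care.
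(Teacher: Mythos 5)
Your plan is correct and follows essentially the same route as the paper: a Cauchy--Schwarz split of the face integral that places the dissipation term (\eqref{Ga1} for $\gamma\geq 2$, the $h|u_\sigma|\Hc''|\Gradedge\vrh|^2$ part of \eqref{est_adf} for $\gamma\in(1,2)$) in one factor, with the trace inequality \eqref{ineq_trace} and the $L^2L^{6/5}$ density bound of Lemma~\ref{lem_est_l2l2} supplying the $h^{-1/2}$ and the $\frac{d}{2}\frac{5\gamma-6}{6\gamma}$ correction in the other. The paper's only cosmetic difference is that for $\gamma<2$ it multiplies the integrand by $\sqrt{\Hc''(\vr_{h,\dagger})(\vr_{h,\dagger}+1)}\geq 1$ rather than dividing by $\Hc''$, which handles the degeneracy you flag by exactly the bound $\vr_{h,\dagger}^{2-\gamma}\lesssim 1+\vr_{h,\dagger}$ implicit in your argument.
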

\begin{proof}
First, for $\gamma \geq 2$ we apply \eqref{Ga1} and get 
\[
\begin{aligned}
& \int_0^T  \sum_{ \sigma \in \edgesint } \int_{\sigma =K|L}{ \abs{ \jump{\vrh}   \vuh \cdot \bfn }  } \dS \dt   
\\ &  \leq  
\left( \int_0^T  \sum_{ \sigma \in \edgesint } \int_{\sigma =K|L}{ \frac{ \jump{  \vrh } ^2 }{\max\{\vr_L, \vr_K \} }  \abs{\vuh \cdot \bfn } } \dS \dt   \right)^{1/2}
\left( \int_0^T  \sum_{ \sigma \in \edgesint } \int_{\sigma =K|L}{  \abs{\vuh \cdot \bfn } \max\{\vr_L, \vr_K \} } \dS \dt   \right)^{1/2}
\\&
\aleq   h^{-1/2} \norm{\vrh}_{L^2 L^{6/5}}^{1/2} \norm{\vuh}_{L^2 L^6}^{1/2}
\aleq h^{-1/2} 
\end{aligned}
\]
as $\norm{\vrh}_{L^2 L^{6/5}} \aleq \norm{\vrh}_{L^\infty L^\gamma} \leq c(E_0)$ provided $\gamma \geq 2$.

%\[
%\begin{aligned}
%& \int_0^T  \sum_{ \sigma \in \edgesint } \int_{\sigma =K|L}{  \jump{  \vrh } ^2  \abs{\vuh \cdot \bfn }  } \dS \dt   
%\\ &  \leq  
%\int_0^T  \sum_{ \sigma \in \edgesint } \int_{\sigma =K|L}{ \frac{ \jump{  \vrh } ^2 }{\max\{\vr_L, \vr_K \} }  \abs{\vuh \cdot \bfn } \max\{\vr_L, \vr_K \} } \dS \dt  
%\\&
%\leq c   \norm{\vrh}_{L^\infty L^\infty} 
%\leq c h^{-d/\gamma}  \norm{\vrh}_{L^\infty L^\gamma} 
%\leq c h^{-d/\gamma}
%\end{aligned}
%\]

Next, for $\gamma \in (1,2)$ it is easy to check that $\Hc''(\vr_{h,\dagger}) (\vr_{h,\dagger}+1) \geq 1$. Thus we derive 
\[
\begin{aligned}
& \int_0^T  \sum_{ \sigma \in \edgesint } \int_{\sigma =K|L}{ \abs{\jump{\vrh}  \vuh \cdot \bfn }  } \dS \dt   
 \leq  
 \int_0^T  \sum_{ \sigma \in \edgesint } \int_{\sigma =K|L}{ \abs{\jump{\vrh}  \vuh \cdot \bfn } \underbrace{\sqrt{\Hc''(\vr_{h,\dagger}) (\vr_{h,\dagger}+1)} }_{\geq 1} } \dS \dt   
\\ &  \leq 
\left(\int_0^T  \sum_{ \sigma \in \edgesint } \int_{\sigma =K|L}{ \Hc''(\vr_{h,\dagger}) \jump{\vrh}^2  \abs{\vuh \cdot \bfn }  } \dS \dt 
\right) ^{1/2}
\left(\int_0^T  \sum_{ \sigma \in \edgesint } \int_{\sigma =K|L}{ (\vr_{h,\dagger}+1) \abs{ \vuh \cdot \bfn }  } \dS \dt 
\right) ^{1/2}
\\&
\leq c   h^{-1/2} \left( \norm{\vrh}_{L^2 L^{6/5}} + 1 \right)^{1/2} \norm{\vuh}_{L^2L^6}^{1/2}
\leq c h^{-1/2} \norm{\vrh}_{L^2 L^{6/5}} ^{1/2}
\leq c h^\beta,
\end{aligned}
\]
where thanks to the second estimate of Lemma~\ref{lem_est_l2l2} $\beta$ reads 
\[ \beta= \begin{cases}  -\frac12, & \mbox{ if }\gamma \in [\frac65,2), \\
-\frac12 + \frac{d}{2} \frac{5 \gamma -6}{6\gamma}, & \mbox{ if } \gamma \in (1,\frac65).
\end{cases}
\]
%as $\norm{\vrh}_{L^2 L^{6/5}} \aleq \norm{\vrh}_{L^\infty L^\gamma} \leq c(E_0)$ provided $\gamma \geq \frac65$. Thanks to the inverse estimate \eqref{inv_est}    we have 
%$$\norm{\vrh}_{L^2 L^{6/5}} \aleq \norm{\vrh}_{L^\infty L^\gamma} h^{d(\frac{5}{6}-\frac{1}{\gamma})}, \ \gamma\in  \left(1,\frac 65\right). $$ 

\end{proof}

\subsection{Consistency formulation}\label{subsec:con}
Another step towards the convergence of a sequence of approximate solutions is the consistency of the numerical scheme. 
%Analogous to the proof of \cite[Theorem 4.6--4.7]{HS_MAC} we get the following the consistency formulae.
\begin{lemma}\label{thm_consistency}
Let the pressure $p$ satisfies \eqref{assumption_p} with $\gamma>1$. 
Let $(\vrh, \vuh)$ be a solution of the numerical scheme \eqref{scheme} with $\Delta t \approx h$.  Then, for any  $\phi \in C_c^2([0,T]\times\Omega),$ and any $\bfPhi \in C_c^2([0,T]\times \Omega;  R^d)$  it holds that
\begin{subequations}\label{cons_form}
\begin{equation} \label{cons1}
- \intO{ \vrh^0 \phi(0,\cdot) }  =
 \int_0^T \intO{ \left[ \vrh \partial_t \phi + \vrh \vuh \cdot \Grad \phi \right]} \dt  + \mathcal{O}( h^{\beta_1}),\  \beta_1>0;
\end{equation}
\begin{multline} \label{cons2}
- \intO{ \vrh^0 \vuh^0 \bfPhi(0,\cdot) }  =
\int_0^T \intO{ \left[ \vrh \avu_h \cdot \partial_t \bfPhi + \vrh \avu_h \otimes \avu_h  : \Grad \bfPhi  + p_h \dix \bfPhi \right]} \dt
\\
 -  \mu \int_0^T \intO{  \Gradedged \vuh : \Grad \bfPhi}  \dt
 -  (\mu+\lambda) \int_0^T \intO{  \dih \vuh : \dix \bfPhi}  \dt
 +  \mathcal{O} (h^{\beta_2}), \ \beta_2>0. 
\end{multline}
\end{subequations}
\end{lemma}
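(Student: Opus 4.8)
The plan is to test each discrete balance in \eqref{scheme} against the piecewise-constant projection of the given smooth test function -- $\Pim\phi$ for the mass equation \eqref{scheme_D} and $\PiEi\Phi_i$ for the $i$-th momentum equation \eqref{scheme_M} -- to sum over the mesh and over the time levels weighted by $\TS$, and then to turn every discrete term into its continuous weak-form counterpart by the discrete integration-by-parts formulae of Lemma~\ref{lem_ibp} together with the convective identities \eqref{conv1}--\eqref{conv2} and the divergence identity \eqref{div_equiv}. Each term then splits into a principal piece of exactly the form appearing in \eqref{cons1}--\eqref{cons2} and a remainder that I bound by a positive power of $h$ using the consistency estimates of Lemma~\ref{NC}, the uniform bounds of Corollary~\ref{ests}, the density bounds of Lemma~\ref{lem_est_l2l2}, and the jump estimate of Lemma~\ref{lem_S1}; throughout, $\TS\approx h$ converts temporal remainders into powers of $h$.

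For the continuity relation \eqref{cons1} I multiply \eqref{scheme_D} by $\Pim\phi$ and integrate over $\Omega$. The discrete time derivative is treated by Abel summation in $n$; since $\phi$ is compactly supported in time the only surviving endpoint contribution is $-\intO{\vrh^0\phi(0,\cdot)}$, and replacing $D_t\Pim\phi$ by $\partial_t\phi$ produces the principal term $\inttime\intO{\vrh\partial_t\phi}\dt$ up to $\order(\TS)$. The upwind term is rewritten through \eqref{conv1}: its leading contribution is $\inttime\intO{\vrh\vuh\cdot\Grad\phi}\dt$, the factor $\Gradedge\Pim\phi-\Grad\phi$ is $\order(h)$ by \eqref{NC2}, and the two $\tfrac{h}{2}$-terms are bounded by a positive power of $h$ using \eqref{NC1}, \eqref{NC3}, the energy bounds \eqref{est_ener}--\eqref{est_u}, and the density bound of Lemma~\ref{lem_est_l2l2}. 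Finally the artificial-diffusion term $-h^\alpha\Lapm\vrh$ is moved onto $\Gradedge\Pim\phi$ via the first identity of \eqref{pp} and estimated by $h^\alpha$ times the dissipation bound \eqref{est_adf}; collecting all contributions gives \eqref{cons1} with some $\beta_1>0$.

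The momentum relation \eqref{cons2} follows the same template with $\PiEi\Phi_i$ as test function and a sum over $i$. The time derivative yields $-\intO{\vrh^0\vuh^0\bfPhi(0,\cdot)}$ and the principal term $\inttime\intO{\vrh\avu_h\cdot\partial_t\bfPhi}\dt$, and the convective term is treated by \eqref{conv2}, whose leading part reconstructs $\inttime\intO{\vrh\avu_h\otimes\avu_h:\Grad\bfPhi}\dt$. The pressure term is in fact \emph{exact}: the first identity of \eqref{grad_div} turns $\sumi\intO{\pdedgei p(\vrh)\PiEi\Phi_i}$ into $-\intO{p_h\dih\PiE\bfPhi}$, and \eqref{div_equiv} gives $\int_K\dih\PiE\bfPhi=\int_K\dix\bfPhi$, so this equals $-\intO{p_h\dix\bfPhi}$ with no remainder. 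The viscous terms are converted by the second identity of \eqref{pp} and the first of \eqref{grad_div} into $\Gradedged\vuh:\Gradedged\PiE\bfPhi$ and $\dih\vuh\,\dih\PiE\bfPhi$, after which \eqref{NC2} and \eqref{div_equiv} replace the discrete operators acting on the projected test field by $\Grad\bfPhi$ and $\dix\bfPhi$ at cost $\order(h)$; the artificial-diffusion right-hand side is bounded by a positive power of $h$ via the dissipation estimates exactly as before, yielding \eqref{cons2} with some $\beta_2>0$.

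The genuine obstacle lies in the remainders that carry the density in a norm stronger than the energy alone provides: the $\tfrac{h}{2}$ upwind corrections of \eqref{conv1}--\eqref{conv2} (originating from \eqref{up_avg}), the discrepancy between the dual-averaged momentum $\avs{\vrh\Ov{u}_{i,h}}^{(i)}$ and $\vrh\avu_h$ together with the difference between the staggered velocity $\vuh$ and its average $\avu_h$, and the artificial-diffusion pairing $h^\alpha\inttime\intO{\Gradedge\vrh\cdot\Gradedge\Pim\phi}\dt$. After Cauchy--Schwarz each of these reduces to a product of $h$ (or $h^\alpha$) with quantities such as $\norm{\vrh}_{L^2L^2}$, the density-jump integral \eqref{S1}, or the weighted dissipation \eqref{est_adf}, which are controlled only by \emph{negative} powers of $h$ through Lemma~\ref{lem_est_l2l2} and Lemma~\ref{lem_S1}. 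The argument therefore succeeds precisely because the extra factor of $h$ overcomes the negative exponent: for example the continuity remainder scales like $h^{1+\beta}$ with $\beta=-(3\alpha+d)/(6\gamma)$, which is positive exactly when $\alpha<2\gamma-d/3$, i.e. in the admissible range \eqref{Choice_alpha}. Verifying that every one of these competing exponents stays strictly positive -- simultaneously for the mass and momentum balances and across the regimes $\gamma\in(1,2)$ and $\gamma\ge2$, with the tightest constraints arising for small $\gamma$ -- is the technical heart of the proof and the reason for the precise choice of $\alpha$.
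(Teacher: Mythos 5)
Your proposal follows essentially the same route as the paper: test \eqref{scheme_D} and \eqref{scheme_M} against $\Pim\phi$ and $\PiE\bfPhi$, convert each term via Lemma~\ref{lem_ibp}, \eqref{conv1}--\eqref{conv2} and \eqref{div_equiv} (with the pressure term exact), and bound the remainders by positive powers of $h$ using Lemma~\ref{NC}, the uniform bounds, and the negative-exponent density estimates of Lemma~\ref{lem_est_l2l2}, with the admissibility condition $\alpha<2\gamma-d/3$ emerging exactly as you describe (the paper carries this out explicitly for $\gamma\ge 2$, where $\|\vrh\|_{L^2L^2}$ is uniformly bounded, and defers the $\gamma\in(1,2)$ balancing to the cited reference \cite{HS_MAC}). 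The argument is correct and no essential step is missing.
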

\begin{proof}
First we show the proof for $\gamma \geq2$ in two steps.
\paragraph{Step 1 -- Consistency of density equation.} To show the consistency formulation \eqref{cons1} we  multiply  the discrete density equation \eqref{scheme_D} with $\Pim \phi$ for $\phi \in C_c^2([0,T]\times\Omega)$ and integrate over $\Omega$. In what follows we handle each term of the product separately.
\begin{itemize}
\item \textbf{Time derivative term.} 
It is easy to calculate   
\begin{align*}
& \sum_{n=1}^N \TS \sumK \intK{  D_t\vrh^n\;  \Pim \phi  } = \sum_{n=1}^N \TS \sumK  D_t\vrh^n \intK{ \phi } =\int_0^T \intO{ D_t \vrh(t) \phi(t) }\dt \\
 &= \int_0^T \intO{ \frac{ \vrh(t) -\vrh(t-\TS)}{\TS} \phi(t)} \dt 
\\ &=\int_0^T \intO{ \frac{\vrh(t) \phi(t)}{\TS}} \dt   -\int_{-\TS}^{T-\TS} \intO{ \frac{\vrh(t) \phi(t+\TS)}{\TS}} \dt 
\\ &
= \int_0^T \intO{\vrh(t) \frac{ \phi(t) -\phi(t+\TS)}{\TS}  }\dt  
-  \int_{-\TS}^0 \intO{ \frac{ \vrh(t) \phi(t+\TS)}{\TS}  }\dt +\int_{T-\TS}^T \intO{ \frac{ \vrh(t) \phi(t+\TS)}{\TS}} \dt 
\\& 
= -\int_0^T \intO{\vrh(t) \left(\pd_t \phi(t) +\frac{\TS}{2} \pd_{tt} \phi(t^*)\right) }\dt
 - \intO{\vrh^0 \phi(0)} - \int_0^{\Delta t}\intO{\vrh^0\frac{ \left(\phi(t)-\phi(0)\right)}{\Delta t}}\dt
\\& =-\int_0^T \intO{\vrh(t) \pd_t \phi(t) }\dt - \intO{\vrh^0 \phi(0)}  +I_0
\end{align*}
where $I_0=  -\int_0^T \intO{\vrh(t) \frac{\TS}{2} \pd_{tt} \phi(t^*)  }\dt - \int_0^{\Delta t}\intO{\vrh^0 \pd_t \phi(t^\dagger)}\dt $ for suitable $t^*, t^\dagger \in(t,t+\TS)$. 
Obviously, $I_0$ can be controlled by 
\[\abs{I_0}  \aleq  \TS  \norm{\vrh}_{L^1L^1} \norm{ \phi }_{C^2} +  \TS \norm{\vrh^0}_{L^1} \norm{ \phi }_{C^1} \aleq c\left(M_0, E_0, \norm{ \phi }_{C^2}\right)  \TS .\]
Therefore, we have 
\begin{equation*}\label{CC1}
\sum_{n=1}^N \TS \sumK \intK{ D_t\vrh^n \; \Pim \phi  } + \int_0^T \intO{\vrh \pd_t \phi }\dt  + \intO{\vr_0 \phi(0)} \leq  c\left(M_0, E_0, \norm{ \phi }_{C^2}\right) \TS . \\
\end{equation*}

\item \textbf{Convective term.} Setting $r_h= \vrh$ in \eqref{conv1} for the convective term, we get 
\[
\sum_{n=1}^N \TS \sumK \divup (\vrh,\vuh)\left(|K| \Pim \phi\right) \dx
 =\inttime \intO{ \vrh \vuh \cdot \Grad\phi } \dt + I_1+ I_2+ I_3 ,
\]
where 
\[
\begin{aligned}  
& I_1 = 
\inttime  \intO{ \vrh \vuh \cdot \big(\Gradedge(\Pim \phi) - \Grad\phi \big)} \dt,
\\& I_2 =
\inttime  \frac{h}{2}\sumi \sumK\intK{\vrh \Lapmi(\Pim\phi) \Ov{\abs{\uih}} }\dt,
\\& I_3 =
\inttime \frac{h}{2}\sumi \sumK\intK{\vrh \Ov{\pdedgei(\Pim\phi)} \pdmeshi{\abs{\uih}}} \dt.
\end{aligned}
\]
The terms $I_i$, $i=1,2,3$ can be controlled as follows
\begin{align*}
&\abs{I_1} \aleq h\norm{\vrh}_{L^2 L^{6/5}} \norm{\vuh}_{L^2L^6} \norm{\phi}_{C^2} \aleq c(E_0,\norm{\phi}_{C^2} )  h,
\\&
\abs{I_2} \aleq h\norm{\vrh}_{L^2 L^{6/5}} \norm{\vuh}_{L^2L^6} \norm{\phi}_{C^2} \aleq c(E_0,\norm{\phi}_{C^2} )  h,
\\&
\abs{I_3} \aleq h \norm{\vrh}_{L^2L^2} \norm{\dih \vuh}_{L^2L^2} \norm{\phi}_{C^2} \aleq c(E_0,\norm{\phi}_{C^2} )  h,
\end{align*}
where we have used H\"older's inequality, Lemma~\ref{NC} and the uniform bounds \eqref{ests1}.

\item \textbf{Artificial diffusion term.} Using the integration by parts formula \eqref{pp} twice together with the estimate \eqref{NC3} yields
\[
\begin{aligned}
\sum_{n=1}^N \TS  \sumK \ha \Lapm \vrh^n \left(|K| \Pim \phi\right) = \ha \int_0^T \intO{  \Lapm\vrh  \phi } \dt 
&= \ha \int_0^T \intO{  \vrh \Lapm \phi } \dt
\\ & \aleq  \ha\norm{\vrh}_{L^1L^1} \norm{\phi}_{C^2} 
\leq c(E_0,\norm{\phi}_{C^2} ) \ha.
 \end{aligned}
 \]
\end{itemize}
Collecting the above proves \eqref{cons1} for $\gamma \geq 2$. 

\paragraph{Step 2 -- Consistency of momentum equation.} To show the consistency formulation \eqref{cons2}, we multiply the discrete momentum equation \eqref{scheme_M} with $\Pie \bfPhi$ for $\bfPhi=(\Phi_1,\ldots, \Phi_d) \in C_c^2([0,T]\times\Omega;R^d)$ and integrate over $\Omega$. Then we proceed analogously as in Step 1.
\begin{itemize}
\item \textbf{Time derivative term.} 
Similarly as in Step 1, we have 
\[
\begin{aligned}
& \sum_{n=1}^N \TS \sumi \intDsi{ \avs{D_t (\vrh \auih)}^{(i)} \PiEi \Phi_i }
%= \sum_{n=1}^N \TS \intDsi{ \avs{D_t (\vrh \auih)}^{(i)}  \Phi_i }
= \int_0^T  \sumK \int_K{ D_t(\vrh \avu_h) \cdot \bfPhi  } \dx \dt
+I_0
\\& =  \int_0^T \intO{\vrh \avu_h \cdot \pd_t \bfPhi }\dt  - \intO{\vrh^0  \avu_h^0 \cdot \bfPhi(0)}  %+ \underbrace{\int_0^T \intO{\vrh \avu_h \left(\frac{ \bfPhi(t) -\bfPhi(t+\TS)}{\TS} - \pd_t \bfPhi \right) }\dt} _{=:  I^C_2},
 \end{aligned}
 \]
where 
\[I_0=  -\frac{\TS}{2} \int_0^T \intO{\vrh(t)\avu_h \cdot \pd_{tt} \bfPhi(t^*)  }\dt - \int_0^{\Delta t}\intO{\vrh^0\avu_h^0 \cdot \pd_t \bfPhi(t^\dagger)}\dt \]
 for suitable $t^*, t^\dagger \in(t,t+\TS)$. 
Obviously 
\[\abs{I_0}  \aleq  \TS  \norm{\vrh\avu_h}_{L^1L^1} \norm{ \bfPhi }_{C^2} +  \TS \norm{\vrh^0\avu_h^0}_{L^1} \norm{ \bfPhi }_{C^1} \aleq c\left(E_0,\norm{ \vr_0 }_{L^2},\norm{ \vu_0 }_{L^2}, \norm{ \bfPhi }_{C^2}\right)  \TS .\]
Consequently, we have 
\begin{multline*}  
  \sum_{n=1}^N \TS \sumi \intDsi{ \avs{D_t (\vrh \auih)}^{(i)} \PiEi \Phi_i }
   + \int_0^T \intO{\vrh  \avu_h \cdot \pd_t \bfPhi }\dt  + \intO{\vr_0 \avu_h^0 \cdot \bfPhi(0)}    \\
  \aleq c\left(E_0, \norm{ \bfPhi }_{C^2}\right) \TS .
\end{multline*}
\item \textbf{Convective term.} Setting $(r_h, \phi)=(\vrh \aujh, \Phi_j)$ in \eqref{conv2} for the convective term, we get 

\begin{equation*}
\begin{aligned}
& \sumj \inttime \intO{ \avsi{\divup[\vrh \aujh,\vuh]} \PiEi \Phi_j } \dt
=\inttime \intO{ (r_h \vuh \otimes \vuh) : \Grad\bfPhi }   \dt + I_1+ I_2+ I_3 ,
\end{aligned}
\end{equation*}
where 
\[
\begin{aligned}  
& I_1 = 
 \sumj \inttime  \intO{ \vrh \aujh \vuh \cdot \big(\Gradedge(\Pim \PiEi \Phi_j) - \Grad \Phi_j \big)} \dt,
\\& I_2 =
\sumj \inttime  \frac{h}{2}  \sumi \sumK\intK{\vrh\aujh \Lapmi(\Pim\PiEi \Phi_j) \Ov{\abs{\uih}} }\dt,
\\& I_3 =
\sumj \inttime \frac{h}{2}\sumi \sumK\intK{\vrh\aujh \Ov{\pdedgei(\Pim \PiEi \Phi_j)} \pdmeshi{\abs{\uih}}} \dt.
\end{aligned}
\]
Employing Lemma \ref{lem_est_l2l2} with $\gamma \geq 2$, the terms $I_i$, $i=1,2,3,$ can be controlled as follows 
\begin{align*}
&\abs{I_1} \aleq h \norm{\vrh \avu_h^2}_{L^1L^1} \norm{\phi}_{C^2} \aleq c(E_0,\norm{\phi}_{C^2} ) h,
\\&
\abs{I_2} \aleq h\norm{\vrh \avu_h}_{L^2 L^2} \norm{\vuh}_{L^2L^6} \norm{\phi}_{C^2} \aleq c(E_0,\norm{\phi}_{C^2} ) h,
\\&
\abs{I_3} \aleq h \norm{\vrh \avu_h}_{L^2L^2} \norm{\dih \vuh}_{L^2L^2} \norm{\phi}_{C^2}\aleq c(E_0,\norm{\phi}_{C^2} ) h,
\end{align*}
where we have used H\"older's inequality and the uniform bounds \eqref{ests1}.

\item \textbf{Pressure term.} By \eqref{div_equiv} and the integration by parts formula \eqref{grad_div}, we have 
\begin{equation*}
\begin{aligned}
  \sum_{n=1}^N \TS \sumi  \intDsi{ \pdedgei p(\vrh^n)\;  \PiEi \Phi_i } &= - \int_0^T  \sumK \intK{ p(\vrh) \dih \PiE \bfPhi} \dt
\\ & = - \int_0^T \intO{ p(\vrh )\; \dix \bfPhi} \dt.
\end{aligned}
\end{equation*}

\item \textbf{Diffusion term.} 
Analogously as the pressure term, we have 
\begin{equation*}
 - \sum_{n=1}^N \TS \sumi  \intDsi{ \pdedgei \dih \vuh^n \;  \PiEi \Phi_i } 
 = + \int_0^T \intO{ \dih \vuh \; \dix \bfPhi} \dt.
\end{equation*}

Next, employing the integration by parts formula \eqref{grad_div} we can write
\begin{equation*} 
\begin{aligned}
 -   \sum_{n=1}^N \TS \sumi  \intDsi{ \Lape \uih^n \PiEi \Phi_i }  &=   \int_0^T \intO{ \Gradedged \vuh : \Gradedged \Pie \bfPhi}
\\ &
 =  \int_0^T \intO{ \Gradedged \vuh : \Grad \bfPhi } +  \underbrace{ \int_0^T \intO{ \Gradedged \vuh : (\Gradedged \Pie \bfPhi - \Grad \bfPhi)}}_{=R},
\end{aligned}
\end{equation*}
where  H\"older's inequality and the estimate \eqref{NC2} imply 
\[  \abs{R}  \aleq \norm{\Gradedged \vuh}_{L^2L^2}  \norm{\Gradedged \Pie \bfPhi - \Grad \bfPhi}_{L^2L^2}  
\aleq  c(E_0,  \norm{ \bfPhi}_{C^2}) \; h.   
\]

\item \textbf{Artificial diffusion term.} We apply \eqref{grad_div}, Lemma~\ref{NC} and \ref{lem_est_l2l2} and chain rule to get
\begin{equation*}
\begin{aligned}
 &  h^\alpha \sum_{n=1}^N \TS \sumi  \intDsi{    \sumj \avs{ \pdmesh^{(j)} \left( \avs{\auih^n}^{(j)}  (\pdedge^{(j)} \vrh^n)  \right) }^{(i)}  \PiEi \Phi_i  }
 \\ & = 
  h^\alpha \int_0^T \sumi  \sumj  \intO{     \pdmesh^{(j)} \left( \avs{\auih^n}^{(j)}  (\pdedge^{(j)} \vrh)  \right)   \Ov{ \PiEi \Phi_i}  } \dt
  \\ &=
- h^\alpha \int_0^T \sumi  \sumj  \intO{      \avs{\auih^n}^{(j)}  (\pdedge^{(j)} \vrh)     \pdedgej  \Ov{ \PiEi \Phi_i}  } \dt
  \\ &=
h^\alpha \int_0^T \sumi  \sumj  \intO{    \vrh \pdmeshj \left(  \avs{\auih^n}^{(j)}       \pdedgej   \Ov{ \PiEi \Phi_i}    \right) } \dt
  \\ &=
h^\alpha \int_0^T \sumi  \sumj  \intO{    \vrh \pdmeshj \left(  \avs{\auih^n}^{(j)}   \right) \Ov{    \pdedgej   \Ov{ \PiEi \Phi_i}  }  } \dt
  \\ & \qquad + 
  h^\alpha \int_0^T \sumi  \sumj  \intO{    \vrh \Ov{\avs{\auih^n}^{(j)} } \pdmeshj \left(       \pdedgej   \Ov{ \PiEi \Phi_i}    \right) } \dt
 \\ & \aleq \ha \norm{\vrh}_{L^2L^2}  \left( \norm{\Gradedged \vuh}_{L^2L^2} \norm{\bfPhi}_{C^1}   +    \norm{ \vuh}_{L^2L^2} \norm{\bfPhi}_{C^2}    \right)
\\ & \aleq c(E_0,   \norm{\bfPhi}_{C^2}    ) \ha.
\end{aligned}
\end{equation*}
\end{itemize}
Collecting the estimates of Step 2 proves \eqref{cons2} and finishes the whole consistency proof for $\gamma \geq 2$. Concerning $\gamma \in (1,2)$, the consistency of the numerical scheme \eqref{scheme} with the no-slip boundary conditions was shown in \cite[Theorems 4.6 and 4.7]{HS_MAC}. Note that the proof remains the same for the periodic boundary conditions, thus we omit it here.  
%Note that the only difference to periodic boundary conditions is the Sobolev inequalities. 
\end{proof}

\subsection{Convergence to DMV solution}
We aim to pass to the limit with the discretization parameter $h \rightarrow 0$ to show the convergence of  a sequence of numerical solutions to the DMV solution. Having established the two essential prerequisites, i.e. stability estimates and consistency formulation in  Lemmas~\ref{thm_stability} and \ref{thm_consistency}, respectively,  the convergence proof can be done analogously as in our recent work \cite{FLMS_FVNS} or the pioneering paper \cite{FL}, in which the same strategy was used. For completeness we briefly recall the main steps.
%It is easy to observe the initial condition \eqref{initial_assumption} satisfies
%\begin{equation} \label{M1}
%\begin{split}
%\vr^0_h &\to \vr_0 \ \mbox{in}\ L^\gamma_{\rm loc}(\Omega), \ \vr^0_h > 0, \ \intO{\vr^0_h \phi} \to
%\intO{ \vr_0 \phi } \ \mbox{for any}\ \phi \in L^\infty(R^d),\\
%\vr^0_h \bfu^0_h &\to \vr_0 \bfu_0 \ \mbox{in}\ L^2_{\rm loc}(\Omega; R^d),\
%\intO{ \vr^0_h  \bfu^0_h  \cdot \bfPhi } \to \intO{ \vr_0 \bfu_0  \cdot \bfPhi }
%\ \mbox{for any} \ \bfPhi \in L^\infty(R^d; R^d),\\
%&\intO{ \left[ \frac{1}{2} \vr^0_h |\bfu^0_h|^2 + \mathcal H(\vr^0_h) \right] } \to
%\intO{ \left[ \frac{1}{2} \vr_0 |{\bfu_0} |^2 + \mathcal H(\vr_0) \right] } \ \mbox{as}\ h \to 0.
%\end{split}
%\end{equation}
\paragraph{Weak limit.}  First, the energy estimates \eqref{energy_estimate} yield, at least for suitable subsequences (not relabelled), 
\begin{align*}
\vr_h &\to \vr \ \mbox{weakly-(*) in}\ L^\infty(0,T; L^\gamma(\Omega)),\ \vr \geq 0,\\
 \avu_h, \vuh &\to \bfu \ \mbox{weakly in}\ L^2((0,T) \times \Omega; R^d),
\mbox{where} \ \bfu \in L^2(0,T; W^{1,2}(\Omega)),\\
 \Gradedged \vuh &\to \Grad \bfu \ \mbox{weakly in} \ L^2((0,T) \times \Omega;
R^{d \times d}), \\
 \dih \vuh &\to \dix \bfu \ \mbox{weakly in} \ L^2((0,T) \times \Omega), \\
\vr_h \avu_h  &\to \widetilde{\vr \bfu} \ \  \mbox{weakly-(*) in}\ L^\infty(0,T; L^{\frac{2\gamma}{\gamma + 1}}(\Omega; R^d)),\\
\vr_h (\avu_h \otimes  \avu_h) + p(\vr_h) \bI  & \to
\left\{ \vr \bfu \otimes \bfu + p(\vr) \bI \right\}\ \mbox{weakly-(*) in}\ \left[ L^\infty(0,T; \mathcal{M}(\Omega) )\right]^{d \times d},
\end{align*}
where    $\widetilde{\cdot}$ and $\{\cdot\}$ denote the $L^1$-weak limit and the $ L^\infty(\mathcal{M}(\Omega))$-weak-(*) limit, respectively.
\paragraph{Young measure generated by numerical solutions.}
According to the weak convergence statement, we can conclude that the family of numerical solutions $(\vrh,\vuh)=\{(\vr_h^n, \vuh^n)\}_{n=1}^{N_t}$ generates a Young measure 
\[
\Nu_{t,x} \in L^\infty((0,T) \times \Omega; \mathcal{P}([0, \infty) \times R^d)) \ \mbox{for a.a.}\ (t,x) \in (0,T) \times \Omega, \mbox{ with }\Nu_{0,x} = \delta_{[\vr_0(x), \bfu_0(x)]}
\]
such that
\[
\left< \Nu_{t,x}, g(\vr, \bfu) \right> = \widetilde{g(\vr, \bfu)}(t,x)\ \mbox{for a.a.}\ (t,x) \in (0,T) \times \Omega,
\]
whenever $g \in C([0, \infty) \times R^d)$, and
\[
g(\vr_h, \vuh) \to \widetilde{g(\vr, \bfu)} \ \mbox{weakly in}\ L^1((0,T) \times \Omega).
\]
We refer the reader to, e.g., \cite{BALL2,PED1} for more details on a parametrized measure $\mathcal{V}_{t,x}.$
\paragraph{Passing to the limit.}
We pass to the limit  with $h \to 0$ in the consistency formulation \eqref{cons_form} and the energy inequality \eqref{energy_estimate} to get
\begin{equation} \label{M4}
\left[ \intO{ \left< \Nu_{t,x} ; \vr  \right> \phi (\tau, \cdot) } \right]_{t = 0}^{t = \tau} = \int_0^\tau
\intO{ \left[   \left< \Nu_{t,x} ; \vr  \right> \partial_t \phi + \left< \Nu_{t,x}, \vr \bfu \right>  \cdot \nabla _x \phi \right] } \dt
\end{equation}
for any $0 \leq \tau \leq T$ and any $\phi \in C^\infty([0,T] \times \Omega)$;  \newline
\begin{equation} \label{M5}
\begin{split}
\left[ \intO{ \left< \Nu_{t,x} ; \vr \bfu \right> \cdot  \bfPhi(0, \cdot) } \right]_{t = 0}^{t = \tau} = \int_0^\tau \intO{
\Big[ \left< \Nu_{t,x}; \vr \bfu \right> \cdot \partial_t \bfPhi + \left< \Nu_{t,x}; \vr \bfu \otimes \bfu +p(\vr) \bI  \right>
: \Grad \bfPhi  \Big] } \ \dt\\
- \int_0^\tau \intO{  \mathcal{S}(\Grad \vu): \Grad \bfPhi } \dt
%- \mu \int_0^\tau \intO{  \Grad  \bfu : \nabla _x\bfPhi} \dt
%- (\mu+\lambda) \int_0^\tau \intO{\dix  \bfu : \dix \bfPhi} \dt
+ \int_0^\tau \intO{ \mathcal{R} : \nabla _x \bfPhi} \dt
\end{split}
\end{equation}
for any $0 \leq \tau \leq T$, $\bfPhi \in \DC([0,T] \times \Omega; R^d)$, where $\mathcal{R}$ is the \emph{concentration remainder},
\[
\mathcal{R} = \left\{ \vr \bfu \otimes \bfu + p(\vr) \bI \right\} - \left< \Nu_{t,x}; \vr \bfu \otimes \bfu + p(\vr) \bI \right>
\in [ L^\infty(0,T; \mathcal{M}(\Omega)) ]^{d \times d};
\]
\begin{equation} \label{M6}
\begin{split}
\left[ \intO{ \frac{1}{2} \left< \Nu_{t,x}; \vr \bfu^2 + \mathcal  \Hc(\vr) \right>  } \right]_{t = 0}^{t = \tau} 
+ \int_0^{\tau} \int_{\Omega} \mathcal{S}(\Grad \vu): \Grad \bfPhi \dx \dt
% \mu \int_0^\tau \intO{|\Grad \bfu |^2 }\ \dt 
  + \mathcal{D}(\tau) &\leq 0
\end{split}
\end{equation}
for a.e. $\tau \in [0,T]$, where $\mathcal{D}$ is the \emph{dissipation defect}  
\[
\begin{aligned}
\mathcal{D}(\tau) =& \lim_{h \to 0} \intO{ \left(\frac 1 2 \vrh | {\vuh } |^2 + \Hc(\vrh)\right) } -  \intO{\left< \Nu_{\tau,x};  \frac 1 2 \vr | \vu  |^2 + \Hc(\vr)\right> }
\\&+ \lim_{h \to 0}  \mu \int_0^\tau \intO{ |\Gradedged \vuh|^2 }\ \dt - \mu  \int_0^\tau \intO{ |\Grad \bfu |^2 }\ \dt
\\&+ \lim_{h \to 0} (\mu+\lambda)\int_0^\tau  \intO{ |\dih \vuh|^2 }\ \dt - (\mu+\lambda) \int_0^\tau \intO{ |\dix \bfu |^2 }\ \dt,
\end{aligned}
\]
which, using  \cite[Lemma 2.1]{FGSGW}, can be shown to satisfy  
\begin{equation} \label{M7}
\int_0^\tau \| \mathcal{R} \|_{\mathcal{M}(\Omega)}\ \dt \aleq  \mathcal{D}(\tau) .
\end{equation}
The detailed passage to the limit can be found in our recent work \cite{FLMS_FVNS}. Based on relations \eqref{M4}--\eqref{M7} we finally conclude that the Young measure $\{ \Nu_{t,x} \}_{t,x \in (0,T) \times \Omega}$ represents
a DMV solution of the Navier--Stokes system \eqref{strong_ns}
in the sense of Definition~\ref{def_dmvs} which proves the first result of Theorem~\ref{thm_convergence}. The second result is an immediate consequence of Theorem~\ref{thm_ws_principle}.
\qed
%
%\begin{remark}
%Indeed, the weak--strong uniqueness principle (\cite[Theorem 4.1]{FGSGW}) implies that the Young measure generated by the family of numerical solutions coincides at a.a. point $(t,x)$ with the Dirac mass supported by the smooth solution of the problem.  In particular, the numerical solutions converge strongly and no oscillations occur.  
%\end{remark}

%%%%%%%%%%%%%%%%%%%%%%%%%%%%%%%%%%%%%%%%%%%%%%%%%%%%%%%%%%%%%%
%%%%%%%%%%%%%%%%%%%%%%%%%%%%%%%%%%%%%%%%%%%%%%%%%%%%%%%%%%%%%%
%%%%%%%%%%%%%%%%%%%%%%%%%%%%%%%%%%%%%%%%%%%%%%%%%%%%%%%%%%%%%%
\section{Proof of Theorem \ref{thm_error_estimates}: convergence rate}\label{sec:error}

The proof of error estimates via the relative energy functional requires the derivation of three estimates, namely 
\begin{itemize}[noitemsep,topsep=0pt,leftmargin=25pt]
\item[i)] consistency error: the identity (inequality) satisfied by a strong solution;
\item[ii)] discrete relative energy: the discrete counterpart of the continuous version of the relative energy inequality;
\item[iii)] approximate relative energy: approximation of the discrete relative energy with a particularly chosen discrete test functions and suitably transformed terms.
\end{itemize}
%Accordingly, we prove three lemmas in what follows. 
Application of the Gronwall inequality on a suitable combination of the approximate  relative energy inequality and the consistency error shall yield the desired convergence rate at the end of this section.

We start the proof by reporting the consistency error satisfied by a strong solution from \cite[Lemma 7.1]{GallouetMAC} (see also \cite[Lemma 7.1]{Gallouet_mixed}). 
\begin{lemma}[Consistency error] \label{lem_F1}
Let $(\vrh, \vuh) \in \Xspace \times \Yspace$ and $\vuh$ satisfy the estimates \eqref{est_u}, i.e., 
\[ \norm{\dih \vu_h}_{L^2L^2} \aleq 1, \quad 
\norm{\Gradedged \vuh}_{L^2 L^2}  \aleq 1,\quad  
\norm{\vuh}_{L^{2}L^6}  \aleq 1. \]
 Let $(r, \vU)$ be a solution of the Navier--Stokes system \eqref{strong_ns} that belongs to the class \eqref{ST_class}. Then for any $m=1,\ldots, N_t$ and $(r_h, \vUh) := (\Pim r,\Pie \vU)$ the following identity holds true:
\[ \sum_{i=1}^5J_i +\mathcal{R}_h^m =0,
\]
where $| \mathcal{R}_h^m | \aleq h + \TS$ and 
\begin{equation}\label{FJ}
\begin{aligned}
J_1&= \TS \sum_{n=1}^m  \mu \intO{\Gradedged \vU_h^n : \Gradedged(\vu_h^n - \vU_h^n) }
+ \TS \sum_{n=1}^m  (\mu+\lambda) \intO{\dih \vU_h^n : \dih(\vu_h^n - \vU_h^n) } , 
\\ J_2 &= \TS \sum_{n=1}^m \intO{ r_h^{n-1} D_t \avU _h^n  \left(  \avu_h^{n} - \avU_h^{n}\right) } ,
\\
J_3 &=  \TS \sum_{n=1}^m \sumK \sumfaceK \intS{ r_h^{\nup} \left( \avu_h^{\nup} -\avU_h^{\nup}  \right)\cdot (\vU_h^n - \avU_h^n )(\Udnup) } ,
\\
J_4 &=  \TS  \sum_{n=1}^m  \intO{  p(r_h^n) [\dix \vU]^n },
\quad J_5 = \TS \sum_{n=1}^m \intO{ p'(r_h^n) \avu_h^n \cdot [\Grad r]^n }.
%\\ J_5 &= \TS \sum_{n=1}^m \intO{ p'(r_h^n) \avu_h^n \cdot [\Grad r]^n }.
\end{aligned}
\end{equation}
\end{lemma}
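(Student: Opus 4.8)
The plan is to recover the identity $\sum_{i=1}^5 J_i+\mathcal R_h^m=0$ by testing the continuous momentum balance of the strong solution against the discrete increment $\vu_h^n-\vU_h^n$ at each time level, and then converting every continuous operator acting on the smooth data into its discrete counterpart while tracking the incurred error. Since $(r,\vU)$ is smooth (class \eqref{ST_class}) it satisfies \eqref{strong_MOM} pointwise, so I would first observe that the \emph{continuous} analogues of $J_1,\dots,J_5$ — obtained by replacing $\Gradedged,\dih,\Pim,\Pie$ by $\Grad,\dix,\mathrm{id}$ and $D_t$ by $\partial_t$, and the upwind flux by the centred one — reassemble exactly the tested momentum equation and hence sum to zero. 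Indeed, the viscous part reproduces $-\intO{\dix\mathbb{S}(\vU)\cdot(\vu_h-\vU_h)}=\mu\intO{\Grad\vU:\Grad(\vu_h-\vU_h)}+(\mu+\lambda)\intO{\dix\vU\,\dix(\vu_h-\vU_h)}$ after integration by parts (using $\dix\mathbb{S}(\vU)=\mu\Delta\vU+(\mu+\lambda)\Grad\dix\vU$ and periodicity), matching $J_1$; the pair $J_4,J_5$ rebuilds $\intO{\Grad p(r)\cdot(\vu_h-\vU_h)}$ via $\Grad p(r)=p'(r)\Grad r$ and one integration by parts on the $\vU$-part; and $J_2$ together with $J_3$ reconstructs $\intO{\big(\partial_t(r\vU)+\dix(r\vU\otimes\vU)\big)\cdot(\vu_h-\vU_h)}$, after using continuity \eqref{strong_CON} to write $\partial_t(r\vU)+\dix(r\vU\otimes\vU)=r(\partial_t\vU+\vU\cdot\Grad\vU)$ and peeling off the $r\,\partial_t\vU$ contribution into $J_2$. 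Thus $\mathcal R_h^m$ is precisely the accumulated discretization error, and it remains to bound each piece by $h+\TS$.

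For the viscous term $J_1$ I would apply the discrete integration-by-parts formula \eqref{pp}, compare with the continuous $-\intO{\dix\mathbb{S}(\vU)\cdot(\vu_h-\vU_h)}$, and estimate the gap through $\norm{\Gradedged\Pie\vU-\Grad\vU}_{L^\infty}\aleq h\norm{\vU}_{C^2}$ from \eqref{NC2}, paired with $\norm{\Gradedged\vu_h}_{L^2L^2}\aleq1$ and $\norm{\dih\vu_h}_{L^2L^2}\aleq1$ from \eqref{est_u} via H\"older. For the pressure terms $J_4,J_5$ I would replace $\Grad r$ by $\pdedge\Pim r$ using the first inequality of \eqref{NC2} for an $O(h)$ error, bounding $p'(r_h)$ uniformly by $|p'|_{C^1([\underline r,\overline r])}$ (smoothness and boundedness of $r$) and the velocity factor by $\norm{\vu_h}_{L^2L^6}\aleq1$; exchanging $p(\Pim r)$ for $\Pim p(r)$ costs a further $O(h)$ since $p\in C^1$. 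For the time-derivative term $J_2$ I would invoke backward-Euler consistency, $D_t\avU_h^n=\partial_t\vU(t^n)+O(\TS)$ controlled in $L^\infty$ through the last estimate of \eqref{NC3}, and replace $r_h^{n-1}$ by $r(t^n)$ at the cost of $O(h+\TS)$, the temporal part being absorbed since $\TS\approx h$.

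The genuinely delicate piece is the convective term $J_3$, which lives on the staggered arrangement and is built from the \emph{upwind} flux. Here I would lean on the consistency identity \eqref{conv2} (applied with $r_h\rightsquigarrow r_h\avU_{j,h}$) to rewrite the discrete upwind convection of momentum as a centred expression of the form $\intO{r_h\,\vu_h\otimes\vU_h:\Grad(\cdots)}$ plus explicit remainders, isolating the numerical-diffusion correction through \eqref{up_avg}, whose size is $\tfrac h2|v_\sigma|\,\pdedge(\cdot)$. Because $\vU$ is smooth, the face jumps of $\vU_h=\Pie\vU$ are themselves $O(h)$, so the upwind penalty contributes only $O(h)$ once its mass-flux weights are paired against $\norm{\vu_h}_{L^2L^6}$ and the density bounds; the interpolation mismatches between primary and dual grids are likewise $O(h)$ by Lemma~\ref{NC}. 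Assembling these, matching the leading centred term against the continuous convective contribution split off in the first paragraph, and collecting all $O(h)$ and $O(\TS)$ leftovers into $\mathcal R_h^m$ yields $|\mathcal R_h^m|\aleq h+\TS$. I expect the bookkeeping for $J_3$ — keeping the staggered interpolations and the upwind correction consistent with the continuous tensor $r\vU\otimes\vU$ — to be the main source of technical work, whereas $J_1,J_2,J_4,J_5$ reduce to direct applications of Lemma~\ref{NC} and the a priori bounds \eqref{est_u}.
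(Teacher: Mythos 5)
The paper does not prove this lemma at all: it is imported verbatim from \cite[Lemma 7.1]{GallouetMAC} (see the sentence preceding the statement), so there is no in--paper argument to compare against. Your reconstruction follows the same strategy as the proof in that reference --- test the pointwise momentum balance of $(r,\vU)$ against the discrete error, identify $J_1,\dots,J_5$ as the discrete counterparts of the viscous, time--derivative, convective and pressure contributions (using \eqref{strong_CON} to reduce $\partial_t(r\vU)+\dix(r\vU\otimes\vU)$ to $r(\partial_t\vU+\vU\cdot\Grad\vU)$), and absorb all interpolation and time--discretization defects into $\mathcal{R}_h^m$ via Lemma~\ref{NC} and the bounds \eqref{est_u} --- and the term--by--term error accounting you give is sound. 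One caveat on $J_3$: identity \eqref{conv2} is not directly applicable there, because the quantity multiplying the flux is $\vU_h^n-\avU_h^n$ (the mismatch between face and cell values of $\Pie\vU$), not the projection of a smooth test function; the correct mechanism is that $\vU_h^n-\avU_h^n=\mathcal{O}(h)$ pointwise by \eqref{ST_class}, while the remaining factors $r_h^{\nup}$, $\avU_h^{\nup}$ are uniformly bounded and the non--smooth factor $\avu_h^{\nup}-\avu_h$ (the jump of the discrete velocity across a face, which is \emph{not} pointwise small) must be handled by Cauchy--Schwarz against $\norm{\Gradedged\vuh}_{L^2L^2}\aleq 1$ together with the trace inequality \eqref{ineq_trace}. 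Since you list exactly these bounds among your tools, this is a presentational rather than a substantive gap.
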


Next, recalling \cite[Lemma 8.1]{GallouetMAC} (see also \cite[Lemma 8.1]{Gallouet_mixed}) we have the following estimates. 
\begin{lemma} \label{lem_F2}
Let $(\vrh,\vuh)\in\Xspace\times\Yspace$ satisfy the uniform bounds stated in Corollary \ref{ests}. Let $(r,\vU)$ be a solution to the Navier--Stokes system \eqref{strong_ns} that belongs to the class \eqref{ST_class}. Then it holds that
\[ \sum_{i=1}^6 Q_i + \sum_{i=1}^5 J_i =\mathcal{P}_h, \]
where $\abs{\mathcal{P}_h } \leq c  \TS \sum_{n=1}^m 
\mathfrak{E}(\vr_h^n,\vu_h^n | r_h^n, \vUh^n) +c \delta \TS \sum_{n=1}^m \norm{\Gradedged (\vuh^n -\vUh^n)}_{L^2} $ with $\delta$ sufficiently small with respect to $\mu$, and 
\begin{equation}\label{FQ}
\begin{aligned}
Q_1 &= - \TS \sum_{n=1}^m \mu \intO{\Gradedged  \vUh^n : \Gradedged(\vu_h^n -  \vUh^n) }- \TS \sum_{n=1}^m (\mu+\lambda) \intO{\dih  \vUh^n : \dih(\vu_h^n -  \vUh^n) } ,
\\
Q_2 &= \TS \sum_{n=1}^m \intO{ \vr_h^{n-1} D_t  \avU_h^n \left(   \avU_h^n - \avu_h^{n} \right) } ,
\\
Q_3 &=  \TS \sum_{n=1}^m \sumK \sumfaceK \intS{ \vr_h^{\nup} \left( \avU_h^{\nup}- \avu_h^{\nup}  \right)\cdot ( \vUh^n - \avU_h^n)(\Udnup) } ,
\\
Q_4 &= - \TS  \sum_{n=1}^m  \intO { p(\vr_h^n) [\dix \vU]^n } ,
\quad
Q_5 = \TS \sum_{n=1}^m \intO{ (r_h^n - \vr_h^n)  \frac{p'(r_h^n)}{r_h^n} [\pd_t r]^n } ,
\\
Q_6 &= - \TS \sum_{n=1}^m \intO{ \vr_h^n  \frac{p'(r_h^n)}{r_h^n} \avu_h^n \cdot [\Grad r]^n  } .
\end{aligned}
\end{equation}
\end{lemma}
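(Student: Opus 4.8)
The plan is to add the families \eqref{FQ} and \eqref{FJ} termwise and to exploit that they were constructed so that their leading (non--relative--energy) parts cancel; throughout I abbreviate $\mathfrak{E}^n:=\mathfrak{E}(\vr_h^n,\vu_h^n | r_h^n,\vU_h^n)$. Comparing the definitions one reads off $Q_1=-J_1$, hence $J_1+Q_1=0$ identically. The time, convective and pressure contributions combine into the differences
\begin{align*}
J_2+Q_2 &= \TS\sum_{n=1}^m \intO{(r_h^{n-1}-\vr_h^{n-1})\,D_t\avU_h^n\,(\avu_h^n-\avU_h^n)},\\
J_3+Q_3 &= \TS\sum_{n=1}^m\sumK\sumfaceK \intS{(r_h^{\nup}-\vr_h^{\nup})(\avu_h^{\nup}-\avU_h^{\nup})\cdot(\vU_h^n-\avU_h^n)\,(\Udnup)},\\
J_4+Q_4 &= \TS\sum_{n=1}^m \intO{\big(p(r_h^n)-p(\vr_h^n)\big)\,[\dix\vU]^n},\\
J_5+Q_6 &= \TS\sum_{n=1}^m \intO{\tfrac{p'(r_h^n)}{r_h^n}\,(r_h^n-\vr_h^n)\,\avu_h^n\cdot[\Grad r]^n},
\end{align*}
while $Q_5$ is kept aside for the moment. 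It then remains to show that these, together with $Q_5$, are dominated by $\mathfrak{E}$ and a small multiple of the dissipation.

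For the pressure block I would insert the continuity equation satisfied pointwise by the strong solution, $\pd_t r=-r\,\dix\vU-\vU\cdot\Grad r$, into $[\pd_t r]^n$ appearing in $Q_5$. Adding $Q_5$ to $J_5+Q_6$ turns the transport part into the velocity difference $\avu_h^n-\vU_h^n$, and adding $J_4+Q_4$ reconstructs the relative pressure $\mathcal{E}_p(\vr_h|r_h):=p(\vr_h)-p(r_h)-p'(r_h)(\vr_h-r_h)$,
\[
\begin{aligned}
J_4+Q_4+Q_5+J_5+Q_6 &= -\TS\sum_{n=1}^m \intO{\mathcal{E}_p(\vr_h^n|r_h^n)\,[\dix\vU]^n}\\
&\quad + \TS\sum_{n=1}^m \intO{\tfrac{p'(r_h^n)}{r_h^n}\,(r_h^n-\vr_h^n)\,(\avu_h^n-\vU_h^n)\cdot[\Grad r]^n} + \mathcal{C}_h,
\end{aligned}
\]
where $\mathcal{C}_h$ gathers the $O(h)$ commutators between $\Pim$ and products of smooth functions, to be controlled by the same decomposition as below. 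Since $r_h\in[\underline r,\overline r]$ and $p$ is convex, $\mathcal{E}_p\ge0$ and pointwise $\mathcal{E}_p(\vr_h|r_h)\aleq\mathbb{E}(\vr_h|r_h)$; as $[\dix\vU]^n$ is bounded by the regularity of $\vU$, the first term is $\aleq\TS\sum_n\mathfrak{E}^n$. The second term is a density--velocity cross term, treated next.

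The surviving quantities $J_2+Q_2$, the cross term above and --- after writing $\avu_h^n-\vU_h^n=(\avu_h^n-\avU_h^n)+(\avU_h^n-\vU_h^n)$ with $\abs{\avU_h-\vU_h}\aleq h$ --- the face term $J_3+Q_3$ are all of the form $\int|r_h-\vr_h|\,|\avu_h-\avU_h|$ with bounded coefficients. The crux is that $\mathfrak{E}$ controls the velocity difference only through the density--weighted quantity $\vr_h|\avu_h-\avU_h|^2$, so I would split $\Omega$ into the essential set, where $\vr_h$ stays comparable to $r$, and its complement. On the essential set $\mathbb{E}(\vr_h|r_h)\approx|\vr_h-r_h|^2$ and $\vr_h|\avu_h-\avU_h|^2\approx|\avu_h-\avU_h|^2$, so Young's inequality yields a bound by $\mathfrak{E}$. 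Where $\vr_h$ is large one uses $|r_h-\vr_h|\approx\vr_h$ together with the superlinear growth $\vr_h\aleq\mathbb{E}(\vr_h|r_h)$ and the weight $\vr_h$ to again land inside $\mathfrak{E}$.

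The main obstacle is the vacuum regime $\vr_h\approx0$: there the density weight degenerates and $\mathfrak{E}$ no longer controls $|\avu_h-\avU_h|$ directly. Here I would exploit that $\mathbb{E}(\vr_h|r_h)\ge p(\underline r)>0$ on this set, so its Lebesgue measure is $\aleq\mathfrak{E}$, and estimate the unweighted $\int|\avu_h-\avU_h|$ by H\"older together with the Sobolev--Poincar\'e inequality \eqref{ineq_sobolev}, spending the resulting $\norm{\Gradedged(\vuh-\vU_h)}_{L^2}$ against the small constant $\delta$ to produce exactly the dissipative term in the stated bound on $\mathcal{P}_h$. The time lag between $\vr_h^{n-1}$ in $J_2+Q_2$ and the level--$n$ velocity is absorbed using the energy dissipation of Lemma~\ref{thm_stability}, and $J_3+Q_3$ is handled analogously, its prefactor $h$ combining with the jump dissipation \eqref{est_r_ujump}. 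Collecting the essential, high--density and vacuum contributions, together with the commutator remainder $\mathcal{C}_h$, yields the claimed estimate for $\mathcal{P}_h=\sum_{i=1}^6 Q_i+\sum_{i=1}^5 J_i$.
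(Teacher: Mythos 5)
The paper does not actually prove this lemma: it is recalled verbatim from \cite{GallouetMAC} (Lemma 8.1 there, see also the corresponding lemma in \cite{Gallouet_mixed}), so there is no in-paper argument to compare against line by line. Your skeleton reproduces the strategy of that reference correctly: the exact cancellation $Q_1+J_1=0$, the reduction of each pair $J_i+Q_i$ to a term carrying the factor $r_h-\vr_h$, the use of the continuity equation $\pd_t r=-r\,\dix\vU-\vU\cdot\Grad r$ to assemble $J_4+Q_4+Q_5+J_5+Q_6$ into $-\int_\Omega\mathcal{E}_p(\vr_h|r_h)\,[\dix\vU]^n\dx$ plus a density--velocity cross term (for $p=a\vr^\gamma$ one has exactly $\mathcal{E}_p=(\gamma-1)\mathbb{E}$), and the essential/residual splitting combined with the Sobolev--Poincar\'e inequality \eqref{ineq_sobolev} and Young's inequality with a small $\delta$ on the near-vacuum set are precisely the right ingredients.

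The gap is the sentence ``$\mathcal{C}_h$ gathers the $O(h)$ commutators \dots to be controlled by the same decomposition as below.'' An absolute $O(h)$ quantity cannot be dominated by $c\,\TS\sum_n\mathfrak{E}(\vr_h^n,\vu_h^n|r_h^n,\vU_h^n)+c\,\delta\,\TS\sum_n\norm{\Gradedged(\vu_h^n-\vU_h^n)}_{L^2}$: if the numerical solution coincided with the projected strong solution the right-hand side would vanish while $O(h)$ would not. You must therefore verify that every commutator you generate retains a factor of $|r_h-\vr_h|$ or $\sqrt{\vr_h}\,|\avu_h-\avU_h|$. For instance, replacing $[r]^n$ by $r_h^n=\Pim r(t^n)$ and $[\vU]^n$ by $\vU_h^n=\Pie\vU(t^n)$ inside $Q_5+J_5+Q_6$ leaves terms of size $h\int_\Omega|r_h^n-\vr_h^n|\dx$, which after the essential/residual splitting and Young's inequality cost $\mathfrak{E}^n+h^2$; likewise the level mismatch in $J_2+Q_2$ (density at $n-1$, velocity at $n$) costs an extra $O(\TS)$ via the $N_3^m$ dissipation of Lemma~\ref{thm_stability}. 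These residuals are harmless for Theorem~\ref{thm_error_estimates} (they are dominated by $h^A+\sqrt{\TS}$), and the cited reference keeps them explicitly, but they are not of the form allowed by the bound on $\mathcal{P}_h$ as literally stated here, so your write-up must either track them or explain why they vanish. A smaller imprecision: on the vacuum set Young's inequality should be arranged to produce $\delta\norm{\Gradedged(\vuh^n-\vUh^n)}_{L^2}^2$, which the dissipation on the left-hand side of the relative energy inequality can absorb, rather than the first power.
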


\subsection{Exact relative energy inequality}
In this section, we derive the relative energy on the discrete level. 
\begin{lemma}[Discrete relative energy]\label{lem_F3}
Let $(\vrh,\vuh) \in \Xspace \times \Yspace $ be a solution to the MAC scheme \eqref{scheme}. Then for any $(r_h,\vUh)\in \Xspace\times\Yspace,$ $r_h >0,$ and for $m=1,\ldots N_t$ it holds 
\begin{equation}\label{re1}
\begin{aligned}
\intO{  \frac12 \left(\vr_h^m |\avu_h^m - \avU_h^m|^2  - \vr_h^0 |\avu_h^0 - \avU_h^0|^2 \right)}
&+ \intO{ \left( \mathbb{E}(\vr_h^m|r_h^m)  -  \mathbb{E}(\vr_h^0|r_h^0) \right) }
\\
   +\TS  \sum_{n=1}^m \mu \intO{ |\Gradedged (\vu_h^n - \vU_h^n)|^2 }
& +\TS  \sum_{n=1}^m (\mu+\lambda) \intO{ |\dih (\vu_h^n - \vU_h^n)|^2 }
 +  \sum_{i=1}^8 T_i \leq 0,
\end{aligned}
\end{equation}
where $T_i= \sum\limits_{n=1}^m \TS \; T_i^n$ and $T_i^n$ read
\[
\begin{aligned}
T_1^n &=   \mu \intO{\Gradedged \vU_h^n : \Gradedged(\vu_h^n - \vU_h^n) }
 + (\mu+\lambda) \intO{\dih \vU_h^n : \dih(\vu_h^n - \vU_h^n) }
 , 
\\ 
T_2^n &= -  \intO{ \vr_h^{n-1} D_t \avU _h^n  \left( \frac{\avU_h^{n-1} + \avU_h^{n} }{2} - \avu_h^{n-1} \right) } ,
\\
T_3^n &=  \sumK \sumfaceK \intS{\vr_h^{\nup} \left( \avs{\avU_h^n} - \avu_h^{\nup}  \right)\cdot \avU_h^n (\udn)} ,
\\
T_4^n &=   \sumK \sumfaceK \intS{ p(\vr_h^n)  (\Udn)} ,
\\
T_5^n &=- \intO{ \frac{r_h^n - \vr_h^n}{\TS} \big(H'(r_h^n) -H'(r_h^{n-1}) \big) } ,
\\
T_6^n &= -  \sumK \sumfaceK \intS{ \vr_h^{\nup} H'(r_h^{n-1})  (\udn) } ,
\\
T_7^n &=  h^\alpha  \intO{ \Lapm \vr_h^n H'(r_h^{n-1})  } ,
\\
T_8^n &= h^\alpha  \intO{\Gradedge \vr_h^n \cdot \Gradedge \avU_h^n \cdot \avs{ \avU_h^n-\avu_h^n} }.
\end{aligned}
\]
\end{lemma}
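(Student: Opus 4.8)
\emph{Proof plan.} The plan is to compute the increment $\mathfrak{E}^n-\mathfrak{E}^{n-1}$ of the discrete relative energy step by step, sum over $n$, and reorganise the outcome into the viscous dissipation (kept on the left), the nonnegative numerical remainders (discarded, yielding the ``$\leq$''), and the explicit terms $T_1^n,\dots,T_8^n$. The starting point is the algebraic splitting, using $r\Hc'(r)-\Hc(r)=p(r)$,
\[
\mathfrak{E}(\vrh^n,\vuh^n|r_h^n,\vUh^n)=\intO{\Big(\tfrac12\vrh^n|\avu_h^n|^2+\Hc(\vrh^n)\Big)}-\intO{\vrh^n\avu_h^n\cdot\avU_h^n}+\intO{\Big(\tfrac12\vrh^n|\avU_h^n|^2-\Hc'(r_h^n)\vrh^n+p(r_h^n)\Big)}.
\]
The first bracket is the total energy, whose increment is controlled by the (per-step) discrete energy balance underlying Lemma~\ref{thm_stability}: it supplies $\mu\TS\norm{\Gradedged\vuh^n}_{L^2}^2+(\mu+\lambda)\TS\norm{\dih\vuh^n}_{L^2}^2$ together with the nonnegative increments of $N_1^n,\dots,N_4^n$ (in particular the backward-Euler remainders for density and velocity), all of which I keep on the left and drop where nonnegative.

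I would obtain the evolution of $-\intO{\vrh\avu_h\cdot\avU_h}$ by testing the momentum equation \eqref{scheme_M} with the face projection $\avU_h^n=\Pie\vU^n$ (componentwise $\PiEi U_i$), summed over $i$ and $n$. The backward-Euler product rule $a^nb^n-a^{n-1}b^{n-1}=a^n(b^n-b^{n-1})+b^{n-1}(a^n-a^{n-1})$ turns the discrete time derivative into the desired increment plus the remainder collected in $T_2^n$; the upwind convective term, rewritten via the integration-by-parts identity \eqref{grad_div} and the averaging formula \eqref{up_avg}, contributes the flux $T_3^n$; the pressure gradient gives, through \eqref{div_equiv} and \eqref{grad_div}, the boundary flux $T_4^n$; the two viscous operators, after completing the square against the $\mu\norm{\Gradedged\vuh^n}_{L^2}^2$ from the energy balance, produce the cross terms $T_1^n$; and the artificial-diffusion right-hand side of \eqref{scheme_M} yields $T_8^n$.

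In parallel I would build $\intO{\tfrac12\vrh|\avU_h|^2}$ by testing the continuity equation \eqref{scheme_D} with $\tfrac12|\avU_h^n|^2$: here $D_t|\avU_h^n|^2=(\avU_h^n+\avU_h^{n-1})\cdot D_t\avU_h^n$ produces the time-averaged weight $\tfrac{\avU_h^{n-1}+\avU_h^n}{2}$ of $T_2^n$, while the upwind flux combines with $T_3^n$ and, together with the momentum contribution, recombines the kinetic terms into $\tfrac12\vrh|\avu_h-\avU_h|^2$. The internal-energy bracket $-\Hc'(r_h)\vrh+p(r_h)$ I treat by telescoping its $D_t$ increments: applying the product rule and testing continuity with $\Hc'(r_h^{n-1})$ gives the flux $T_6^n$ and the artificial-diffusion term $T_7^n$ (via \eqref{grad_div} and \eqref{pp}), the cross term $(r_h^n-\vrh^n)D_t\Hc'(r_h^n)$ gives exactly $T_5^n$, and a Taylor expansion of $\Hc$ leaves the remainder $\tfrac12\Hc''(\xi)(r_h^n-r_h^{n-1})^2/\TS\geq 0$, which by convexity of $\Hc$ is dropped and accounts, together with the $N_j^n$, for the final inequality.

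The main obstacle I anticipate is the staggered-grid bookkeeping of the kinetic energy. Since the momentum is stored as the dual-face average $\avs{\vrh\avu_h}^{(i)}$ while $\vrh$ lives on the primary cells, reconstructing $\tfrac12\vrh|\avu_h-\avU_h|^2$ forces me to match the dual-cell mass averages with the two families of upwind surface fluxes --- one from the momentum test against $\avU_h$, one from the continuity test against $\tfrac12|\avU_h|^2$ --- and to check that they recombine \emph{exactly} into $T_3^n$ with the correct time levels (e.g.\ the $-\avu_h^{n-1}$ entry of $T_2^n$ and the upwind value $\vrh^{\nup}$ in $T_3^n$). Keeping every time index consistent through each backward-Euler product rule, so that the discarded remainders genuinely have the right sign, is the delicate part; by contrast the spatial integrations by parts are routine given Lemma~\ref{lem_ibp} and the identities \eqref{up_avg}, \eqref{div_equiv}.
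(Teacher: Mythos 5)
Your plan is correct and is essentially the paper's own computation: the paper tests \eqref{scheme_D} with $\tfrac12(|\avU_h^n|^2-|\avu_h^n|^2)$ and with $H'(\vr_h^n)-H'(r_h^{n-1})$, and \eqref{scheme_M} with $\vu_h^n-\vU_h^n$, all in one pass — which is precisely the linear combination of weights your three brackets (energy, cross term, correction) produce, so the identification of $T_1^n,\dots,T_8^n$ and of the nonnegative discarded remainders comes out identically. Your only deviation is the organizational shortcut of invoking the per-step energy balance behind Lemma~\ref{thm_stability} as a black box instead of re-deriving it inline, and you correctly flag the genuinely delicate points (staggered-grid bookkeeping and the time levels in the backward-Euler product rules).
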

\begin{proof}
We multiply the discrete density equation \eqref{scheme_D} by $\frac 12\left(|\avU_h^n|^2-|\avu_h^n|^2 \right)$ and $\left(H'(\vr_h^n)-H'(r_h^{n-1})\right)$. Then we multiply the discrete momentum equation \eqref{scheme_M} by $\left(\uih^n-\Uih^n\right)$, and sum over $i=1,\ldots,d.$  We integrate the resulting equations over $\Omega$ to get 
\begin{align*}
0&=\intO{D_t \vrh^n  \frac{|\avU_h^n|^2 - |\avu_h^n|^2}{2}} + \intO{\di_{\Up} [\vr^n_h, \vu_h^n]  \frac{|\avU_h^n|^2 - |\avu_h^n|^2}{2}} - h^\alpha \intO{\laph \vrh^n  \frac{|\avU_h^n|^2 - |\avu_h^n|^2}{2}}
\\&= :\sum_{k=1}^3 I_k, \\
0&=\intO{D_t \vrh^n \left( H'(\vr_h^n)-H'(r_h^{n-1})\right)} + \intO{\di_{\Up} [\vr^n_h, \vu_h^n] \left( H'(\vr_h^n)-H'(r_h^{n-1})\right)} 
\\& \quad 
- h^\alpha \intO{\laph \vrh^n  \left( H'(\vr_h^n)-H'(r_h^{n-1})\right)} = :\sum_{k=4}^6 I_k, \\
 0&=\intO{D_t \avs{ \vrh^n \Ov{\vu_h^n} }\cdot\left( \vu_h^n-\vU_h^n\right)} + \intO{\avs{\di_\Up[\vrh^n \Ov{\vu_h^n}, \vu_h^n] }\cdot\left( \vu_h^n-\vU_h^n\right)} 
+ \intO{\Gradedge p (\vrh^n)\cdot \left( \vu_h^n-\vU_h^n\right)} 
\\ & \quad 
- \intO{ \big( \mu  \Lape \vu_h^n + (\mu+\lambda) \Gradedge (\dih \vu_h^n)\big) \cdot\left( \vu_h^n-\vU_h^n\right) }  
\\& \quad 
-  h^\alpha \sumi \intO{ \sumj \avs{ \pdmesh^{(j)} \left( \avs{\Ov{\uih^n}}^{(j)}  (\pdedge^{(j)} \vrh^n)  \right) }^{(i)}  \left( \uih^n-\Uih^n\right)}  =:\sum_{k=7}^{11} I_k.
\end{align*}
Now we sum up all $I_k$ terms and derive the desired inequality in 7 steps:
\begin{itemize}
\item  The sum of $I_1$ and $I_7$ yields $T_2^n$:
\begin{align*}
I_1+I_7 &= \frac{1}{2\TS} \intO{ \left[ \vr_h^n |\avu_h^n-\avU_h^n|^2 -
 \vr_h^{n-1} |\avu_h^{n-1}-\avU_h^{n-1}|^2 \right]} \\
 &+ \frac{1}{2\TS}\intO{\vr_h^{n-1}\bigg(|\avu_h^{n-1}-\avU_h^{n-1}|^2+|\avu_h^{n}|^2-|\avU_h^{n}|^2 - 2 \avu_h^{n-1}\avu_h^n +2 \avu_h^{n-1}\avU_h^n\bigg)}\\
 & = \frac{1}{2\TS} \intO{ \left[ \vr_h^n |\avu_h^n-\avU_h^n|^2 -
 \vr_h^{n-1} |\avu_h^{n-1}-\avU_h^{n-1}|^2 \right]}  +  T_2^n + D_1,
\end{align*}
where 
\begin{align*}
D_1=\frac{1}{2\TS}\intO{\vr_h^{n-1}|\avu_h^{n-1}-\avu_h^{n}|^2} \geq 0.
\end{align*}
\item   Term $I_4$ results in $T_5^n$:
\begin{align*}
I_4 &= \frac{1}{\TS}\intO{\left[H(\vr_h^n)-H(\vr_h^{n-1})+\frac12 H'(\vr_h^*)(\vr_h^n-\vr_h^{n-1})^2\right]} + \frac{1}{\TS}\intO{(\vr_h^{n-1}-\vr_h^{n})H'(r_h^{n-1})}\\
%&= \frac{1}{\TS}\intO{\mathbb{E}(\vr_h^n| r_h^n)- \mathbb{E}(\vr_h^{n-1}| r_h^{n-1})}+ \underbrace{\frac{1}{\TS}\intO{(\vr_h^n-r_h^n)(H'(r_h^{n})-H'(r_h^{n-1}))} }_{=T_5} + D_{2,1} + D_{2,2},
&= \frac{1}{\TS}\intO{\mathbb{E}(\vr_h^n| r_h^n)- \mathbb{E}(\vr_h^{n-1}| r_h^{n-1})}+  T_5^n + D_{2,1} + D_{2,2},
\end{align*}
where 
\begin{align*}
D_{2,1}&= \frac{1}{2\TS}\intO{ H'(\vr_h^*)(\vr_h^n-\vr_h^{n-1})^2} \geq 0,\\
D_{2,2}&=\frac{1}{\TS}\intO{\bigg[H(r_h^n)-H(r_h^{n-1})-H'(r_h^{n-1})(r_h^n-r_h^{n-1})\bigg]}=\frac{1}{2\TS}\intO{ H''(r_h^*)(r_h^n-r_h^{n-1})^2} \geq 0.
\end{align*}

\item Term $T_3^n$ comes from adding $I_2$ and $I_8$ together:
\begin{align*}
I_2 + I_8 & = \sumK \sumfaceK \intS{ \left[ \vr_h^{\nup} \avu_h^{\nup} \cdot (\avu_h^n - \avU_h^n) - \vr_h^{\nup} \frac{|\avu_h^n|^2 - |\avU_h^n|^2}{2} \right] \udn }
\\& = \shkl \intS{ \vr_K^n [\udn]^+ \left( \frac12 |\avu_K^n -\avu_L^n|^2  +  (\avU_K^n -\avU_L^n) \left(\frac{\avU_K^n +\avU_L^n}{2}- \avu_K^n \right) \right) }
\\& \quad+
 \shkl \intS{ \vr_L^n [\vuh^n\cdot \bfn_{\sigma,L}]^+ \left( \frac12 |\avu_L^n -\avu_K^n|^2  +  (\avU_L^n -\avU_K^n) \left(\frac{\avU_L^n +\avU_K^n}{2}- \avu_L^n \right) \right) }
\\&
%=  - \sumK \sumfaceK \intS{  \vr_h^{\nup}  (\avs{\avU_h^n} -\avu_h^{\nup}) \cdot \avU_h^n [\udn]}
%+D_3
= T_3^n +D_3,
\end{align*}
where 
\[ D_3 = \frac12 \shkl \intS{  \vrh^{\nup} |\avu_K^n - \avu_L^n|^2  |\udn| } \geq 0.
\]

\item  The sum of $I_5$ and $I_9$ yields both $T_4^n$ and $T_6^n$:
\begin{align*}
I_5 + I_9 & =
 \sumK \sumfaceK \intS{ \left[ \vr_h^{\nup} (H'(\vrh^n)- H'(r_h^{n-1})) \udn -  p(\vrh^n) (\vuh^n - \vUh^n) \cdot \bfn_{\sigma,K} \right] }
\\& =  T_6^n + T_4^n + D_4,
\end{align*}
where 
\begin{align*}
D_4 &=  \sumK \sumfaceK \intS{ \big( \vr_h^{\nup} H'(\vrh^n) - p(\vrh^n) \big)  \udn}
\\& =
\shkl \intS{ [\udn]^+ ( H(\vr_K^n) - H'(\vr_L^n)(\vr_K^n- \vr_L^n) -  H(\vr_L^n)  ) }
\\& \quad 
+ \shkl \intS{[\vuh^n \cdot \bfn_{\sigma,L}]^+ ( H(\vr_L^n) - H'(\vr_K^n)(\vr_L^n- \vr_K^n) -  H(\vr_K^n)  )  }
 \geq 0.
\end{align*}

\item The sum of $I_3$ and $I_{11}$ gives exactly $T_8^n$:
\begin{align*}
I_3 + I_{11} &= - h^\alpha \intO{ \Lapm \vrh^n \frac{|\avU_h^n|^2 - |\avu_h^n|^2}{2}
- \ha \sumi \avs{ \sumj \pdmeshj\big( \avs{\Ov{u}_{i,h}^n}^{(j)} \pdedgej \vrh^n \big)}^{(i)} (\uih^n -\Uih^n ) }
\\& =
\frac{\ha}{2} \intO{ \Gradedge \vrh^n \cdot \Gradedge \left( |\avU_h^n|^2 - |\avu_h^n|^2\right)}
+ \ha \sumi \sumj \intO{   \avs{\Ov{\uih^n}}^{(j)}  \pdedgej \vrh^n \pdedgej(\Ov{\uih^n}-\Ov{\Uih^n}) }
\\& =
\ha \sumi \sumj \intO{ \pdedgej \vrh^n \pdedgej \Ov{\Uih^n} \avs{\Ov{\Uih^n}}^{(j)} }
- \ha \sumi \sumj \intO{  \avs{\Ov{\uih^n}}^{(j)} \pdedgej \vrh^n \pdedgej \Ov{\Uih^n} }
= T_8^n.
 \end{align*}
 
\item Term $I_6$ results in $T_7^n$:
\begin{align*}
I_6 =  T_7^n + D_5, 
\end{align*}
where
\[ D_5=\ha  \sumK \sumfaceK \intS{ \Gradedge \vrh^n \cdot \Gradedge H'(\vrh^n)}
=\ha  \sumK \sumfaceK \intS{ |\Gradedge \vrh^n|^2 H''(\vr^n_{h,\dagger})} \geq 0.
\]

\item Finally, by rewriting $I_{10}$ in a convenient way we get $T_1^n$:
\begin{align*}
I_{10} %= \mu \intO{\Gradedged \vuh^n : \Gradedged(\vuh^n - \vUh^n) } 
=  \mu \intO{ |\Gradedged (\vu_h^n - \vU_h^n)|^2 } 
 +  (\mu+\lambda) \intO{ |\dih (\vu_h^n - \vU_h^n)|^2 }
+ T_1^n.
\end{align*}
\end{itemize}

 Collecting all the above calculations and summing them up for all times steps $i=1,\ldots,m$ finishes the proof. 

\end{proof}

\subsection{Approximate relative energy inequality}
In this subsection, we further analyse the inequality derived in Lemma \ref{lem_F3} (for the numerical solution). The aim is to derive the $Q_i$ terms stated in Lemma~\ref{lem_F2} (for the strong solution) from the $T_i$ terms, such that we can use the result of Lemmas \ref{lem_F1} and \ref{lem_F2} to estimate the relative energy between the numerical and strong solutions. To this end, the test function pair $(r_h,\vUh)$ in Lemma~\ref{lem_F3} must be chosen properly, see the result below.  
\begin{lemma}[Approximate relative energy]\label{lem_F4}
Let $(\vrh,\vuh) \in \Xspace\times\Yspace$ be a solution to scheme \eqref{scheme}, and let 
$(r_h,\vUh) :=(\Pim r, \Pie \vU)$ for $(r,\vU)$ be a strong solution to the system \eqref{strong_ns} that belongs to the class \eqref{ST_class}. 
Then there exists a positive constant
 \[c=c(M_0, E_0, \underline{r}, \overline{r}, |p'|_{C([\underline{r},\overline{r}])}, \|(\Grad r, \pd_t r, \pd_t \Grad r, \pd_t^2 r, \vU, \Grad \vU, \Grad^2 \vU, \pd_t\vU, \pd_t \Grad \vU)\|_{L^\infty(\Omega)})\]
such that for all $m=1,\ldots,N_t$ it holds  
\begin{equation*}
\begin{aligned}
\mathfrak{E}(\vr_h^m , \vu_h^m| r_h^m, \vUh^m)    -
\mathfrak{E}(\vr_h^0 , \vu_h^0| r_h(0), \vUh(0)) 
+\TS \sum_{n=1}^m \mu \intO{  |\Gradedged (\vuh^n -  \vUh^n)|^2 }
\\ +\TS \sum_{n=1}^m(\mu+\lambda) \intO{ |\dih (\vu_h^n - \vU_h^n)|^2 }
\leq \sum_{i=1}^6 Q_i + R_h^m + G^m,
\end{aligned}
\end{equation*}
where $Q_i, i=1,\ldots, 6$ are given in \eqref{FQ} 
%\[ 
%\begin{aligned}
%Q_1 &= - \TS \sum_{n=1}^m \mu \intO{\Gradedged  \vUh^n : \Gradedged(\vu_h^n -  \vUh^n) } ,
%\\
%Q_2 &= \TS \sum_{n=1}^m \intO{ \vr_h^{n-1} D_t  \avU_h^n \left(   \avU_h^n - \avu_h^{n} \right) } ,
%\\
%Q_3 &=  \TS \sum_{n=1}^m \sumK \sumfaceK \intS{ \vr_h^{\nup} \left( \avU_h^{\nup}- \avu_h^{\nup}  \right)\cdot ( \vUh^n - \avU_h^n)(\Udnup) } ,
%\\
%Q_4 &= - \TS  \sum_{n=1}^m  \intO { p(\vr_h^n) [\dix \vU]^n } ,
%\\
%Q_5 &= \TS \sum_{n=1}^m \intO{ (r_h^n - \vr_h^n)  \frac{p'(r_h^n)}{r_h^n} [\pd_t r]^n } ,
%\\
%Q_6 &= - \TS \sum_{n=1}^m \intO{ \vr_h^n  \frac{p'(r_h^n)}{r_h^n} \avu_h^n \cdot [\Grad r]^n  } ,
%\end{aligned}
%\]
and
\[
\abs{G^m} \leq c \TS \sum_{n=1}^m \mathfrak{E}(\vrh^n , \vuh^n| r_h^n, \vUh^n), \quad 
\abs{R_h^m} \leq c(\sqrt{\TS} + h^A), \quad 
A=\begin{cases} \frac{2\gamma - d}{\gamma} & \mbox{ if } \gamma \in [\frac32,2),\\
\frac12 & \mbox{ if } \gamma \geq 2.
\end{cases}
\]
\end{lemma}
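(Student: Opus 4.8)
The plan is to begin from the discrete relative energy inequality of Lemma~\ref{lem_F3}, evaluated at the particular test pair $(r_h,\vUh)=(\Pim r,\Pie\vU)$ generated by the strong solution, and to convert each of the eight terms $T_i$ into the corresponding $Q_i$ of \eqref{FQ} up to remainders. Since that inequality reads $\mathfrak E^m-\mathfrak E^0+(\text{viscous dissipation})+\sum_{i=1}^8 T_i\le 0$, it suffices to establish the term-by-term identities $-T_i=Q_i+(\text{remainder}_i)$ and then to sort every remainder either into $G^m$, when it has the form ``(difference of $\vr_h,r_h$ or $\avu_h,\avU_h$ or $p(\vr_h),p(r_h)$) $\times$ (smooth bounded coefficient)'' and is therefore controlled by $c\,\TS\sum_n\mathfrak E(\vr_h^n,\vu_h^n|r_h^n,\vUh^n)$ through the convexity of $\mathbb E(\cdot|\cdot)$, or into $R_h^m$, when it is a genuine truncation error of size $\sqrt{\TS}+h^A$. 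The symbolic $\sum Q_i$ produced here is exactly the quantity that Lemmas~\ref{lem_F1} and \ref{lem_F2} will bound in the subsequent Gronwall step.

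The terms that are essentially algebraic I would dispose of first. The viscous term satisfies $T_1=-Q_1$ by inspection, so $-T_1=Q_1$ with no remainder. For $T_2$ I would insert $\tfrac{\avU_h^{n-1}+\avU_h^n}{2}-\avu_h^{n-1}=(\avU_h^n-\avu_h^n)-\tfrac{\TS}{2}D_t\avU_h^n+\TS D_t\avu_h^n$: the first summand reproduces $Q_2$, while the two $\TS$-weighted summands are bounded by $c\,\TS$ after Cauchy--Schwarz against the kinetic dissipation estimate \eqref{est_dtu}, hence land in $R_h^m$. For the pressure flux $T_4$ I would use cell-wise constancy of $p(\vrh)$ together with the divergence identity \eqref{div_equiv} to rewrite the boundary flux as $\intO{p(\vrh)\dih\vUh}=\intO{p(\vrh)\,\dix\vU}$, which equals $-Q_4$ up to an $O(h)$ projection error. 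The enthalpy contributions $T_5,T_6$ are treated by Taylor expanding $H'$ and using $H''(\vr)=p'(\vr)/\vr$: in $T_5$ the difference quotient of $H'(r_h)$ becomes $H''(r_h^n)[\pd_t r]^n$ and yields $Q_5$; in $T_6$ a discrete integration by parts via \eqref{grad_div} turns the enthalpy mass flux into $\intO{\vr_h\vuh\cdot\Grad H'(r_h)}$ with $\Grad H'(r_h)$ replaced by $\tfrac{p'(r_h)}{r_h}\Grad r$ up to projection error, yielding $Q_6$. The mismatches are of two kinds, namely density/velocity differences times smooth data (into $G^m$) and $O(\TS+h)$ truncations (into $R_h^m$). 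Finally the artificial-diffusion terms $T_7,T_8$ are estimated directly: a double discrete integration by parts bounds $T_7$ by $c\,h^\alpha\norm{\vrh}_{L^1L^1}\norm{H'(r)}_{C^2}$, and $T_8$ is controlled by Cauchy--Schwarz against the weighted gradient estimate \eqref{est_adf}; since $\alpha>1$ both are of higher order than $h^A$ and enter $R_h^m$.

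The genuinely hard step, and the main obstacle, is the convective term $T_3$. Here the advecting flux is the numerical one, $\udn$, whereas $Q_3$ carries the strong-solution upwind flux $\Udnup$, and the velocity factor $\avU_h^n$ of $T_3$ must be reorganised into the difference structure $(\avU_h^{\nup}-\avu_h^{\nup})\cdot(\vUh^n-\avU_h^n)$ of $Q_3$. The plan is to expand the upwind flux through \eqref{up_avg} into a centered part plus a numerical-diffusion part, to Taylor expand the smooth field $\vUh=\Pie\vU$ using Lemma~\ref{NC}, and to regroup faces $\sigma=K|L$ so that the $Q_3$ structure emerges; the remaining discrepancy between $\udn$ and $\Udnup$ generates the velocity error $(\vuh-\vUh)$ together with density jumps. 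These leftover flux contributions are precisely where the convergence rate is produced. I would estimate them by the upwind kinetic dissipation \eqref{est_r_ujump}, the density-jump bound of Lemma~\ref{lem_S1}, and, for $\gamma\ge2$, the dissipation bound of Lemma~\ref{lem_Ga}, combined with the momentum and density bounds of Lemma~\ref{lem_est_l2l2}. Tracking the worst power of $h$ through these estimates gives the exponent $A=\tfrac{2\gamma-d}{\gamma}$ for $\gamma\in[\tfrac32,2)$, which is governed by the $L^2L^{6/5}$ density bound, and $A=\tfrac12$ for $\gamma\ge2$, which is governed by the $h^{-1/2}$ in the jump estimates balanced against the upwind dissipation.

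Collecting the eight conversions, the sum of all relative-energy-type remainders yields the asserted $G^m$ with $|G^m|\le c\,\TS\sum_{n=1}^m\mathfrak E(\vr_h^n,\vu_h^n|r_h^n,\vUh^n)$, while the accumulated truncation and flux errors yield $|R_h^m|\le c(\sqrt{\TS}+h^A)$ with $A$ as stated. Together with $-\sum T_i=\sum Q_i+G^m+R_h^m$ and Lemma~\ref{lem_F3} this is exactly the approximate relative energy inequality. The delicate point throughout is to ensure that no remainder requires a norm of $(\vuh-\vUh)$ stronger than what the left-hand dissipation $\TS\sum_n\norm{\Gradedged(\vuh^n-\vUh^n)}_{L^2}^2$ can absorb; any such term must be split by Young's inequality with a small parameter so that its gradient part can be moved to the left-hand side, exactly as the $\delta$-term is handled in Lemma~\ref{lem_F2}.
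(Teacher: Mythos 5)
Your overall route is the one the paper takes: start from the discrete relative energy inequality of Lemma~\ref{lem_F3} with $(r_h,\vUh)=(\Pim r,\Pie\vU)$, convert each $T_i$ into the corresponding $Q_i$ plus remainders, and sort the remainders into a relative-energy part $G^m$ and a truncation part $R_h^m$. Your treatments of $T_1$, $T_2$, $T_5$, and the broad strategy for $T_3$ match the paper's, and your $T_2$ decomposition is algebraically identical to the paper's ``$\pm\frac12\avU_h^n\pm\avu_h^n$'' insertion.

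There is, however, one concrete misstep that matters. You claim the remainders produced by $T_6$ are of two kinds, ``relative-energy type'' and ``$O(\TS+h)$ truncations.'' That is false, and it is precisely in $T_6$ that the rate $A=\frac12$ for $\gamma\ge 2$ is generated. Converting
$\sumK\sumfaceK\intS{\vrh^{\nup}H'(r_h^{n-1})\,\udn}$ into $Q_6$ requires replacing the upwind density $\vrh^{\nup}$ by the centered value $\vrh^n$; the resulting remainder (the paper's $R_{4,1}$) involves $\sum_\sigma\intS{\abs{\jump{\vrh}\,\vuh\cdot\bfn}}$ multiplied by the $O(h)$ error $H'(r_h^{n-1})-H'(\Pie r^{n-1})$, and by Lemma~\ref{lem_S1} the jump integral costs $h^{-1/2}$, so this term is only $O(h^{1/2})$ --- not $O(h)$. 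You do invoke Lemma~\ref{lem_S1} and the ``$h^{-1/2}$ balanced against dissipation'' mechanism, but you attribute it to $T_3$, where in the paper's accounting the remainders are $O(h)$, relative-energy type, or $O(h^{(2\gamma-d)/\gamma})$ (the latter coming from an inverse estimate on $\norm{\avu_h^{\nup}-\vuh}_{L^q}$ with $q=\frac{2\gamma}{\gamma-1}$, not from the $L^2L^{6/5}$ density bound as you state). So if you execute your plan as written, $T_6$ leaves an unclosed term, and the stated provenance of both exponents in $A$ is wrong even though the final value of $A$ is correct. Two smaller points: $-T_4=Q_4$ holds exactly (piecewise constancy of $p(\vrh)$ plus \eqref{div_equiv}), no $O(h)$ projection error is needed; and estimating $T_8$ by Cauchy--Schwarz against \eqref{est_adf} introduces the weight $\left(\Hc''(\vr_{h,\dagger})\right)^{-1}$, which is not uniformly bounded in $\vrh$ --- the paper instead integrates by parts discretely to move the derivative off $\vrh$ and uses the $L^2L^{6/5}$ and $L^2L^2$ density bounds of Lemma~\ref{lem_est_l2l2}.
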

\begin{proof}
We start the proof from the inequality \eqref{re1} derived in the previous Lemma~\ref{lem_F3}. We only need to deal with the terms $T_i,$ $i=1,\ldots,8,$  as the other terms will remain the same. 

\begin{itemize}
\item
We keep the term $T_1$  unchanged and set $Q_1 = - T_1$.  

\item  The second term $T_2$ can be rewritten as 
\begin{align*}
- T_2  &=
\TS \ \sum_{n=1}^m \intO{ \vr_h^{n-1} D_t \avU _h^n  \left( \frac{\avU_h^{n-1} + \avU_h^{n} }{2} - \avu_h^{n-1} \pm \frac12 \avU_h^n \pm \avu_h^n \right) } = 
\\ & = 
Q_2
%\int_0^T \intO{\vr_h^{n-1} D_t \avU _h^n  \left( \avU_h^{n} - \avu_h^{n} \right)  } \dt
+\underbrace{ \frac12 \TS \ \sum_{n=1}^m \intO{ \vr_h^{n-1} D_t \avU _h^n  (\avU_h^{n-1} - \avU_h^{n}) }
+  \TS \ \sum_{n=1}^m \intO{ \vr_h^{n-1} D_t \avU _h^n  (\avu_h^{n} - \avu_h^{n-1}) }  }
_{ =: R_1 },
\end{align*}
where by the interpolation estimate \eqref{NC3} and the uniform bounds \eqref{ests1} we have 
\begin{align*}
\abs{ R_{1} } &= \abs{ \frac12 \TS \ \sum_{n=1}^m \intO{ \vr_h^{n-1} D_t \avU _h^n  (\avU_h^{n-1} - \avU_h^{n}) }
+  \TS \ \sum_{n=1}^m \intO{ \vr_h^{n-1} D_t \avU _h^n  (\avu_h^{n} - \avu_h^{n-1}) }  }
\\& \aleq
\TS \norm{\vrh}_{L^{\infty} L^1} \norm{\pd_t U}_{L^{\infty}W^{1,\infty}}^2
+ \norm{\vrh}_{L^{1} L^1}^{1/2}  \norm{\pd_t U}_{L^{\infty}W^{1,\infty}}  
\left( \int_0^T \intO{ \vr_h^{n-1} \frac{|\avu_h^{n} - \avu_h^{n-1}|^2}{(\TS)^2} (\TS)^2 } \dt \right)^{1/2}
\\& \aleq c(E_0, \norm{\vU}_{C^2} ) \TS.
\end{align*}

\item From the third term $T_3$ we get
\begin{align*}
- T_3 &= - \TS \sum_{n=1}^m \sumK \sumfaceK \intS{\vr_h^{\nup} \left( \avs{\avU_h^n} - \avu_h^{\nup}  \right)\cdot \avU_h^n (\udn)} 
\\&=  
- \TS \sum_{n=1}^m \sumK \sumfaceK \intS{\vr_h^{\nup} \left( \avU_h^{\nup} - \avu_h^{\nup}  \right)\cdot \avU_h^n (\udn)} +R_{2,1},
%\\& \qquad 
%+ \TS \sum_{n=1}^m \sumK \sumfaceK \intS{\vr_h^{\nup} \left( \avU_h^{\nup} - \avs{\avU_h^n}  \right)\cdot \avU_h^n (\udn)} 
%\\&= 
%- \TS \sum_{n=1}^m \sumK \sumfaceK \intS{\vr_h^{\nup} \left( \avU_h^{\nup} - \avu_h^{\nup}  \right)\cdot (\avU_h^n-\vUh^n) (\udn)} 
%\\& \qquad  
%\underbrace{ + \frac{\TS}2 \sum_{n=1}^m \shkl \intS{ \Big[ \vr_K^n |\avU_K^n - \avU_L^n |^2[\udn]^+  +
%\vr_L^n |\avU_K^n - \avU_L^n |^2[\vuh^n \cdot \bfn_{\sigma,L}]^+  \Big]
%} }_{=:R_{2,1}}
%\\&= 
%- \TS \sum_{n=1}^m \sumK \sumfaceK \intS{\vr_h^{\nup} \left( \avU_h^{\nup} - \avu_h^{\nup}   \right)\cdot (\avU_h^n -\vUh^n) (\avU^{\nup}_h\cdot \bfn_{\sigma,K}) } 
%+R_{2,1} +R_{2,2}
%\\& = Q_3 +R_{2,1} +R_{2,2}
\end{align*}

where 
\[
\begin{aligned}
R_{2,1} &= 
 \TS \sum_{n=1}^m \sumK \sumfaceK \intS{\vr_h^{\nup} \left( \avU_h^{\nup} - \avs{\avU_h^n}  \right)\cdot \avU_h^n (\udn)} 
\\& = 
\frac{\TS}2 \sum_{n=1}^m \shkl \intS{ \Big[ \vr_K^n |\avU_K^n - \avU_L^n |^2[\udn]^+  +
\vr_L^n |\avU_K^n - \avU_L^n |^2[\vuh^n \cdot \bfn_{\sigma,L}]^+  \Big] }.
\end{aligned} 
\]
Seeing the equality 
\[ \sumK \sumfaceK \intS{f_h^{up} \cdot \vUh^n (\udn)}  =0 \mbox{ for } f_h^{up} = \vr_h^{\nup} \left( \avU_h^{\nup} - \avu_h^{\nup}  \right)
\]
we have 
\begin{align*} - &T_3 = \TS \sum_{n=1}^m \sumK \sumfaceK \intS{\vr_h^{\nup} \left( \avU_h^{\nup} - \avu_h^{\nup}  \right)\cdot (\vUh^n - \avU_h^n) (\udn)}  +R_{2,1} 
\\ & = 
\TS \sum_{n=1}^m \sumK \sumfaceK \intS{ \vr_h^{\nup} \left( \avU_h^{\nup}- \avu_h^{\nup}  \right)\cdot ( \vUh^n - \avU_h^n)(\Udnup) } +R_{2,1} +R_{2,2} 
\\& = Q_3 + R_{2,1} +R_{2,2},
\end{align*}
where 
\begin{align*}
R_{2,2} &=  \TS \sum_{n=1}^m \sumK \sumfaceK \intS{\vr_h^{\nup} \left( \avU_h^{\nup} - \avu_h^{\nup}    \right)\cdot (\avU_h^n -\vUh^n) (\avU^{\nup}_h-\vuh^n )\cdot \bfn_{\sigma,K}}
\\&  =
  \TS \sum_{n=1}^m \sumK \sumfaceK \intS{\vr_h^{\nup} \left( \avU_h^{\nup} - \avu_h^{\nup}    \right)\cdot (\avU_h^n -\vUh^n) (\avU^{\nup}_h- \avu_h^{\nup})\cdot \bfn_{\sigma,K}}
\\ & \quad  +
   \TS \sum_{n=1}^m \sumK \sumfaceK \intS{\vr_h^{\nup} \left( \avU_h^{\nup} - \avu_h^{\nup}    \right)\cdot (\avU_h^n -\vUh^n) ( \avu_h^{\nup} -\vuh^n  )\cdot \bfn_{\sigma,K}} 
\\& = R_{2,2,1}+R_{2,2,2}.
\end{align*}
Given  $\gamma \geq \frac{d}{2} >\frac65$, we get
\begin{align*}
\abs{ R_{2,1} }\aleq h \norm{\vrh}_{L^2 L^{6/5}}  \norm{\vuh}_{L^2 L^6}  (\norm{\Grad \vU}_{L^{\infty} L^{\infty}} )^2 \aleq c(E_0, \norm{\vU}_{C^1}) h,
\end{align*}
by using H\"older's inequality, the uniform bounds \eqref{ests1} and the trace inequality \eqref{ineq_trace}. 
Further, by a similar argument it holds 
\begin{align*}
\abs{ R_{2,2,1} } \aleq c(\norm{\vU}_{C^1}) \TS\sum_{n=1}^m \mathfrak{E}(\vrh^n,\vuh^n|r^n,\vU^n) ,
\end{align*}
and 
\begin{align*}
\abs{ R_{2,2,2} } &\aleq \norm{\vU}_{C^1} \norm{\sqrt{\vrh}}_{L^{\infty}L^{2\gamma}} \TS\sum_{n=1}^m \mathfrak{E}(\vrh^n,\vuh^n|r^n,\vU^n)^{1/2}\norm{ \avu_h^{\nup} -\vuh^n}_{L^q}\\
&\aleq \norm{\vU}_{C^1}  \norm{\vrh}_{L^{\infty}L^{\gamma}}^{1/2} \left(
h^{2d\left(\frac{1}{q}-\frac12\right)} h^2  \norm{ \Gradedged\vu_h}^2_{L^2L^2} +\TS\sum_{n=1}^m \mathfrak{E}(\vrh^n,\vuh^n|r^n,\vU^n)\right)\\
&\aleq  c(E_0, \norm{\vU}_{C^1}) h^{\frac{2\gamma-d}{\gamma}} + c(E_0, \norm{\vU}_{C^1}) \TS\sum_{n=1}^m \mathfrak{E}(\vrh^n,\vuh^n|r^n,\vU^n),
\end{align*}
where we have also used Young's inequality. Here $q=\frac{2\gamma}{\gamma-1} \in (2,6)$ provided $\gamma >\frac{d}{2}.$

\item
The fourth term $T_4$ directly yields
\begin{align*}
- T_4 &= - \TS \sum_{n=1}^m \sumK \sumfaceK \intS{ p(\vr_h^n) \dix \vUh}
 = - \TS \sum_{n=1}^m \sumK \sumfaceK \intS{ p(\vr_h^n) [\dix \vU ]^n } \\
 &=Q_4.
\end{align*}

\item We proceed with the fifth term $T_5$ and obtain
\begin{align*}
- T_5 &=  \TS \sum_{n=1}^m \intO{ \frac{r_h^n - \vr_h^n}{\TS} \big(H'(r_h^n) -H'(r_h^{n-1}) \big) } 
\\ & 
=\TS \sum_{n=1}^m \intO{ \frac{r_h^n - \vr_h^n}{\TS} \left( H''(r_h^n) ( r_h^n- r_h^{n-1}) - \frac{H'''(r^{n,\star}_h)}{2}( r_h^n- r_h^{n-1})^2  \right)}
\\& = Q_5 + R_{3,1} +R_{3,2},
\end{align*}
where 
\[
R_{3,1}= - \TS \sum_{n=1}^m \intO{ \frac{r_h^n - \vr_h^n}{\TS}  \frac{H'''(r^{n,\star}_h)}{2}( r_h^n- r_h^{n-1})^2 },
\]
\[
R_{3,2} = \TS \sum_{n=1}^m \intO{ (r_h^n - \vr_h^n) \frac{p'(r_h^n)}{r_h^n} ( D_t r_h^n - [\pd_t r]^n)}.
\]
The two residual terms can be estimated as follows
\begin{align*}
|R_{3,1}| &\aleq  \TS \norm{r_h - \vrh}_{L^{1}L^1}  |p'|_{C^1([\underline{r}, \Ov{r}])} \norm{\pd_t r}_{L^{\infty}L^{\infty}} 
\aleq c(M_0, \norm{p}_{C^2([\underline{r}, \Ov{r}])}, \norm{r}_{C^1} ) \TS,\\
\abs{ R_{3,2} }  &\aleq  \TS \norm{r_h - \vrh}_{L^{\infty}L^1}  |p'|_{C^1([\underline{r}, \Ov{r}])} \big( \norm{\pd_{t}^2 r}_{L^{\infty}L^{\infty}} +  \norm{\pd_{t} \Grad r}_{L^{\infty}L^{\infty}} \big).
\end{align*}

\item Term $T_6$ yields $Q_6$ after a suitable manipulation and estimating three residual terms. Indeed, 
\begin{align*}
- T_6 &= \TS \sum_{n=1}^m \sumK \sumfaceK \intS{ \vrh^{\nup} H'(r_h^{n-1})  (\udn) } ,
\\& = \TS \sum_{n=1}^m \sumK \sumfaceK \intS{ \vrh^n \big( H'(r_h^{n-1}) -H'(\Pie r ^{n-1}) \big)  (\udn) } 
\\ & \quad 
+\underbrace{ \TS \sum_{n=1}^m \sumK \sumfaceK \intS{ (\vrh^{\nup} -\vrh^n )  \big( H'(r_h^{n-1}) -H'(\Pie r ^{n-1}) \big)  (\udn) } }_{R_{4,1}}
\\& = 
\TS \sum_{n=1}^m \sumK \sumfaceK \intS{ \vrh^n  H''(r_h^{n-1}) (r_h^{n-1}- \Pie r ^{n-1})   (\udn) } +R_{4,1}
\\& \qquad 
- \underbrace{ \frac{\TS}{2} \sum_{n=1}^m \sumK \sumfaceK \intS{ \vrh^n  H'''(r^{n}_{h,\dagger})(r_h^{n-1}- \Pie r ^{n-1})^2  (\udn) } }_{R_{4,2}}
\\ & =
- \TS \sum_{n=1}^m  \intO{ \vrh^n  H''(r_h^{n-1}) \avu_h^n \cdot [\Grad r]^{n-1} } +R_{4,1} +R_{4,2}  
\\& \qquad 
+ \underbrace{ \TS \sum_{n=1}^m \sumK \sumfaceK \intS{ \vrh^n  H''(r_h^{n-1}) (r_h^{n-1}- \Pie r ^{n-1})   (\avu_h^n - \vuh^n) \cdot \bfn_{\sigma,K} }   }_{=:R_{4,3}} 
\\& = Q_6 + R_{4,1} +R_{4,2}  + R_{4,3}  ,
\end{align*}
where we have used the following equality in the last second line
\[  \sumfaceK \intS{ (r_K -  \Pie r ) \avu_h \cdot \bfn_{\sigma,K} }= - \int_K{ \avu_h \cdot [\Grad r] } \dx.
\]

Now we estimate the residual terms $R_{4,i}, i=1,2,3$.  It holds
%\begin{align*}
%\abs{ R_{4,1} } & = \abs{ \TS \sum_{n=1}^m \sumK \sumfaceK \intS{ (\vrh^{\nup} -\vrh^n )  \big( H'(r_h^{n-1}) -H'(\Pie r ^{n-1}) \big)  (\udn) }  }
%%\\& =  \TS \left| \sum_{n=1}^m \sumK \sum_{\sigma=K|L \in \edgesK} \intS{(\vr_L^n -\vr_K^n)  \big( H'(r_h^{n-1}) -H'(\Pie r ^{n-1}) \big)  [\udn]^- } \right|
%\\& \aleq 
%\TS  \sum_{n=1}^m \sumK  \sumfaceK \intS{ |\vrh^{\nup} -\vrh^n|  H''(r_{h,\dagger}^{n-1})  |r_h^{n-1} - \Pie r ^{n-1}|\;  |\udn|  \underbrace{ \sqrt{H''(\vr^n_{h,\dagger}) (1 + \vr^n_{h,\dagger})} }_{\geq 1}   } 
%\\ & \aleq
% h |p'|_{C^1([\underline{r}, \Ov{r}])} \norm{\Grad r}_{L^{\infty}L^{\infty} } 
%\left(\int_0^T \sumK \sumfaceK \intS{ h^2 H''(\vr^n_{h,\dagger}) |\Gradedge \vrh|^2 |\udn| } \right)^{1/2} 
%\\& \qquad \times
%\left(\int_0^T \sumK \sumfaceK \intS{ |\vuh^n|  + |\vr^n_{h,\dagger} \vuh^n| } \right)^{1/2}
%\\& \aleq 
%h  |p'|_{C^1([\underline{r}, \Ov{r}])} \norm{\Grad r}_{L^{\infty}L^{\infty} }  \left(\norm{\vuh}_{L^1L^1}^{1/2} + \norm{\vrh}_{L^2L^{6/5}}^{1/2}  \norm{ \vuh}_{L^2L^6}^{1/2}  \right)
%%\\& \aleq h  |p'|_{C^1([\underline{r}, \Ov{r}])} \norm{\Grad r}_{L^{\infty}L^{\infty} }  \norm{\vuh}_{L^2L^6}^{1/2} + h^{\frac{9\gamma -6}{4 \gamma}}  |p'|_{C^1([\underline{r}, \Ov{r}])} \norm{\Grad r}_{L^{\infty}L^{\infty} }  \norm{\vrh}_{L^{\infty}L^{\gamma}}^{1/2}  \norm{ \vuh}_{L^2L^6}^{1/2}  
%\end{align*}
\begin{equation}\label{R41}
\begin{aligned}
\abs{ R_{4,1} } & = \abs{ \TS \sum_{n=1}^m \sumK \sumfaceK \intS{ (\vrh^{\nup} -\vrh^n )  \big( H'(r_h^{n-1}) -H'(\Pie r ^{n-1}) \big)  (\udn) }  }
\\& \aleq 
\TS  \sum_{n=1}^m \sumK  \sumfaceK \intS{ |(\vrh^{\nup} -\vrh^n)\udn|\;  H''(r_{h,\dagger}^{n-1})  |r_h^{n-1} - \Pie r ^{n-1}| } 
\\ & \aleq
 h |p|_{C^1([\underline{r}, \Ov{r}])} \norm{\Grad r}_{L^{\infty}L^{\infty} } 
\int_0^T \sumK \sumfaceK \int_{D_\sigma}{   \abs{ \Gradedge \vrh   (\vuh\cdot\bfn)^-} \dx \dt} 
\\ & \aleq
c\left( E_0, |p|_{C^1([\underline{r}, \Ov{r}])}, \norm{\Grad r}_{L^{\infty}L^{\infty} }   \right) h^{1/2},
\end{aligned}
\end{equation}
where we have used Lemma~\ref{lem_S1}. Further, given $\gamma \geq \frac{d}{2}>\frac65,$ we get
\begin{align*}
\abs{ R_{4,2}  }
& =\frac{\TS}{2} \sum_{n=1}^m \sumK \sumfaceK \intS{ \vrh^n  H'''(r^{n}_{h,\dagger})(r_h^{n-1}- \Pie r ^{n-1})^2  (\udn) }\\
& \aleq 
h |p'|_{C^1([\underline{r}, \Ov{r}])} \norm{\Grad r}_{L^{\infty}L^{\infty} }^2  \norm{\vrh}_{L^2L^{6/5}}  \norm{\vuh}_{L^2L^6} 
\\& \leq 
c\left( E_0,  \norm{p'}_{C^1([\underline{r}, \Ov{r}])} , \norm{\Grad r}_{L^{\infty}L^{\infty} }^2  \right) h .
\end{align*}
Finally, by using H\"older's inequality, the trace inequality \eqref{ineq_trace}, the velocity bounds \eqref{est_u}, and Lemma~\ref{lem_est_l2l2} we get
\begin{align*}
\abs{ R_{4,3}  } = 
&\abs{ \TS \sum_{n=1}^m \sumK \sumfaceK \intS{ \vrh^n  H''(r_h^{n-1}) (r_h^{n-1}- \Pie r ^{n-1})   (\avu_h^n - \vuh^n) \cdot \bfn_{\sigma,K} }   }\\
& \aleq 
h |p'|_{C^0([\underline{r}, \Ov{r}])} \norm{\Grad r}_{L^{\infty}L^{\infty} } 
\norm{\vrh }_{L^2L^2}
\norm{\Gradedged \vuh}_{L^2L^2}
\\ & \aleq 
c\left(E_0, |p'|_{C^0([\underline{r}, \Ov{r}])} \norm{\Grad r}_{L^{\infty}L^{\infty} } \right) h^\beta,
\end{align*}
where  
\begin{equation}\label{hbeta}
 \beta = \begin{cases}
 1+ \frac{\gamma -2 }{2 \gamma }d >\frac{2\gamma-d}{\gamma} &\mbox{ if } \gamma \in [\frac{d}{2},2), \\
1 >\frac12 & \mbox{ if } \gamma \geq 2. 
\end{cases}
\end{equation}
\item
For the seventh term $T_7$ we get
%\begin{align*}
%|(\TS) T_7| & =  \TS h^\alpha \left| \sum_{n=1}^m \intO{ \Lapm \vr_h^n H'(r_h^{n-1})  } \right|
%\\& =
%\TS h^\alpha \sum_{n=1}^m  \left| \intO{  \Gradedge \vrh^n \cdot \Gradedge H'(r_h^{n-1}) } \right| 
%\\& = h^\alpha  |p'|_{C^1([\underline{r}, \Ov{r}])} \norm{\Grad r}_{L^{\infty}L^{\infty} }
%\int_0^T \intO{|\Gradedge \vrh^n|  \sqrt{H''(\vr^n_{h,\dagger})} (\vr^n_{h,\dagger})^{ 1-\gamma/2} }
%\\& \aleq 
% h^{\alpha/2}  |p'|_{C^1([\underline{r}, \Ov{r}])} \norm{\Grad r}_{L^{\infty}L^{\infty} }
%\left( \ha \int_0^T \intO{|\Gradedge \vrh^n|^2 H''(\vr^n_{h,\dagger})} \right)^{1/2}
%\norm{ \vrh }_{L^{2-\gamma} L^{2-\gamma}}^{1-\gamma/2}
%\\& \aleq 
% h^{\alpha/2}  |p'|_{C^1([\underline{r}, \Ov{r}])} \norm{\Grad r}_{L^{\infty}L^{\infty} }
% \norm{ \vrh }_{L^{\infty} L^{\gamma}}^{1-\gamma/2}
%\end{align*}
%\cblue{ New estimate of $T_7$ (improve the rate from $\alpha/2$ to $\alpha$): 
\begin{align*}
| T_7| & =  \left| \TS h^\alpha  \sum_{n=1}^m \intO{ \Lapm \vr_h^n H'(r_h^{n-1})  } \right|
 =
\left| \TS h^\alpha  \sum_{n=1}^m  \intO{ - \Gradedge \vrh^n \cdot \Gradedge H'(r_h^{n-1}) } \right| 
\\&=  
\left| \TS h^\alpha \sum_{n=1}^m \intO{  \vr_h^n \Lapm H'(r_h^{n-1})  } \right|
\\& \aleq h^\alpha  |p'|_{C^1([\underline{r}, \Ov{r}])} \norm{\Grad r}_{L^{\infty}L^{\infty} } \norm{ \vrh }_{L^1 L^1}
 \aleq h^\alpha  |p'|_{C^1([\underline{r}, \Ov{r}])} \norm{\Grad r}_{L^{\infty}L^{\infty} } \norm{ \vrh }_{L^\infty L^\gamma}
\\& 
\leq  c(E_0, |p'|_{C^1([\underline{r}, \Ov{r}])}, \norm{\Grad r}_{L^{\infty}L^{\infty} }) \; \ha.
\end{align*}
%}

\item The last term $T_8$ yields the bound
\begin{align*}
|T_8| &= \TS  \ha \left| \sum_{n=1}^m
 \sumi \sumj \intO{ \pdedgej \vrh^n \pdedgej \Ov{\Uih^n} \avs{\Ov{\Uih^n -\uih^n }}^{(j)} } \right|
\\& =   \TS  h^\alpha \left|  \sum_{n=1}^m  \sumi \sumj \intO{ \vr_h^n \pdmeshj \left( \pdedgej \Ov{\Uih^n}  \avs{ \Ov{\Uih^n} -\Ov{\uih^n} }^{(j)} \right)}  \right|
\\& =  
 \TS  h^\alpha  \left| \sum_{n=1}^m \intO{ \vr_h^n \left( \Lapm \avU_h^n \cdot \Ov{ \avs{ \avU_h^n-\avu_h^n}} \right)}  \right|
\\& \qquad +
\TS  h^\alpha  \left| \sum_{n=1}^m  \sumi \sumj 
 \intO{ \vr_h^n \Ov{\pdedgej \Ov{\Uih^n}} \cdot \pdmeshj \avs{ \Ov{\Uih^n} -\Ov{\uih^n} }^{(j)} }  \right|
\\& \aleq 
\ha \norm{ \vrh }_{L^{2} L^{6/5}} \norm{\vU}_{C^2} 
 \left( \norm{\vUh}_{C^0}  + \norm{\vuh}_{L^2 L^6} \right)
+ \ha 
\norm{ \vrh }_{L^1 L^1}   \norm{\vU}_{C^1}^2  +\ha \norm{\vU}_{C^1}
 \norm{\vrh }_{L^2 L^2}  \norm{ \Gradedged \vuh}_{L^2 L^2} 
\\& 
\aleq  c \left( \norm{\vU}_{C^2}, \norm{\vrh}_{L^\infty L^{\gamma}} ,\norm{\vuh}_{L^2 L^6}\right) \ha
+
c \left( \norm{\vU}_{C^1}, \norm{\vuh}_{L^\infty L^\gamma}, \norm{\Gradedged \vuh}_{L^2 L^2} \right) h^{\zeta},
\end{align*}
where we have used the inequality 
\[
\norm{ \pdmeshj \avs{ \Ov{\uih^n} }^{(j)} } _{L^2}  \aleq \norm{ \eth_j \uih} _{L^2},
\]
and $\zeta=\alpha -1 +\beta>A$ with $\beta$ being given in \eqref{hbeta} 
due to the same trick as in the estimate of the term $R_{4,3}$.
\end{itemize}
\end{proof}

\subsection{End of error estimates }
Collecting the estimates in Lemmas \ref{lem_F1}, \ref{lem_F2}, and \ref{lem_F4} immediately yields the following inequality
\begin{multline*}
%\begin{aligned}
\mathfrak{E}(\vr_h^m,\vu_h^m | r^m, \vU^m) + \TS \sum_{n=1}^m \frac{\mu}{2}\intO{|\Gradedged(\vu_h^n-\vU_h^n)|^2 }
 +\TS \sum_{n=1}^m(\mu+\lambda) \intO{ |\dih (\vu_h^n - \vU_h^n)|^2 }
 \\
\leq c \left[ h^A + \sqrt{\TS} + \mathfrak{E}(\vr_h^0, \vu_h^0 | r(0), \vU(0))\right] + c \TS \sum_{n=1}^m \mathfrak{E}(\vr_h^n, \vu_h^n | r^n, \vU^n),
%\end{aligned}
\end{multline*}
for all $m=1,\ldots, N$. Here, the convergence rate $A$ is defined in \eqref{A}, and the positive constant 
\[c=c(M_0, E_0, \underline{r}, \overline{r}, |p'|_{C([\underline{r},\overline{r}])}, \|(\Grad r, \pd_t r, \pd_t \Grad r, \pd_t^2 r, \vU, \Grad \vU, \Grad^2 \vU, \pd_t\vU, \pd_t \Grad \vU)\|_{L^\infty(\Omega)})\]
 depends tacitly also on $T,$  $\gamma,$ \textnormal{diam}$(\Omega)$, $|\Omega|$.

Finally, applying Gronwall's inequality to the above estimates, we finish the proof of Theorem \ref{thm_error_estimates}.

\section{Numerical experiments}
Concerning the performance of scheme \eqref{scheme}, we refer to \cite[Section 5]{HS_MAC}, where both  the homogeneous Dirichlet and periodic boundary conditions were implemented. Here we aim to validate the theoretical results stated in Theorem~\ref{thm_error_estimates}, that is the convergence rate  derived in terms of the relative energy. Hence, we measure the following  errors
\begin{equation}\label{err_eoc}
\begin{aligned}
e_{\mathfrak{E}} = \sup \limits_{ 1\leq n \leq N_t} \mathfrak{E}(\vrh^n,\avu_h^n\vert r(t^n,\cdot),\vU(t^n,\cdot)),\quad 
e_{\nabla \vu}= \norm{ \Gradedged (\vuh-\vU) }_{L^2(0,T; \Omega)},\\
e_{\vr}=\norm{ \vrh-r }_{L^1(0,T; \Omega)}, \quad 
e_{\vu}=\norm{ \vuh-\vU }_{L^2(0,T; \Omega)},\quad 
e_{p}=\norm{ p(\vrh)-p(r)}_{L^{\infty}(0,T; \Omega)}
\end{aligned}
\end{equation}
between the numerical solution $(\vrh,\vuh)$ and the reference solution $(r,\vU).$ For this purpose we  perform two experiments in the domain $\Omega=[0,1]^2$. In the first experiment the reference solution is explicitly given by considering suitable external force in the momentum equation. In the second experiment the reference solution is set as the numerical solution computed on a very fine mesh.  In both tests, we set $\mu=1$, $\alpha=1.6$ satisfying \eqref{Choice_alpha}. 

\subsection{Experiment 1}
We first consider the following analytical solution  
\begin{equation}\label{ex_sol}
r(x,y,t) = 1 ,\quad 
\vU(x,y,t) =\left(
\begin{array}{l}
\sin(2\pi x) \cos(2\pi y)e^{-k t} \\
- \cos(2\pi x) \sin (2\pi y)e^{-k t}
\end{array} \right) ,\; k=0.01, 
\end{equation}
that is driven by the corresponding external force in the momentum equation. 
%The initial value is accordingly set as 
%\[ \vrh^0= \Pim r(0),\quad \avu_h^0 =\Pim \vU_0,\quad u_{2,h}^0 =\Pim U_2(0).
%\]
We show in Table~\ref{tab_exp1} the relative errors in the norms presented in \eqref{err_eoc} for different values of $\gamma$. Clearly, we observe the second order convergence rate for the relative energy and the first order convergence rate for the density, velocity and the gradient of velocity.  

\begin{table*}[h!]\centering
\caption{Experiment 1: error norms at $T=0.1$ for different $\gamma$}\label{tab_exp1}
\begin{tabular}{c|c|c|c|c|c|c|c|c|c|c}\hline 
  $h$   &  $e_{\mathfrak{E}} $ &  EOC & $ e_{\nabla \vu}$ &  EOC  &  $e_{\vr}$ &  EOC  &  $e_{\uu}$ &  EOC &  $e_{p }$ &  EOC  \\ \hline 
\multicolumn{11}{c}{ $\gamma=1.4$} \\ \hline 
  1/32 & 1.34e-02	 & -- & 5.03e-01	 & -- & 3.90e-03	 & --  & 4.31e-02	 & -- & 9.73e-02	 & -- \\  
1/64 &3.44e-03	 &1.96	 &2.53e-01	 &0.99	 &1.93e-03	 &1.02  &2.16e-02	 &0.99 &5.03e-02	 &0.95  \\  
1/128 &8.71e-04	 &1.98	 &1.27e-01	 &1.00	 &9.57e-04	 &1.01  &1.08e-02	 &1.00 &2.55e-02	 &0.98  \\  
1/256 &2.19e-04	 &1.99	 &6.35e-02	 &1.00	 &4.76e-04	 &1.01  &5.42e-03	 &1.00 &1.28e-02	 &0.99  \\  
%1/512 &5.49e-05	 &2.00	 &3.18e-02	 &1.00	 &2.37e-04	 &1.00  &2.71e-03	 &1.00 &6.43e-03	 &1.00  \\  
%1/1024 &1.37e-05	 &2.00	 &1.59e-02	 &1.00	 &1.18e-04	 &1.00  &1.36e-03	 &1.00 &3.22e-03	 &1.00  \\  
\hline 
\multicolumn{11}{c}{ $\gamma=1.67$} \\ \hline 
  1/32 & 1.39e-02	 & -- & 5.02e-01	 & -- & 3.86e-03	 & --  & 4.28e-02	 & -- & 1.15e-01	 & -- \\  
1/64 &3.58e-03	 &1.96	 &2.53e-01	 &0.99	 &1.91e-03	 &1.01  &2.15e-02	 &0.99 &5.93e-02	 &0.95  \\  
1/128 &9.07e-04	 &1.98	 &1.27e-01	 &1.00	 &9.50e-04	 &1.01  &1.08e-02	 &1.00 &3.00e-02	 &0.98  \\  
1/256 &2.28e-04	 &1.99	 &6.34e-02	 &1.00	 &4.72e-04	 &1.01  &5.40e-03	 &1.00 &1.51e-02	 &0.99  \\  
%1/512 &5.72e-05	 &2.00	 &3.17e-02	 &1.00	 &2.35e-04	 &1.00  &2.70e-03	 &1.00 &7.57e-03	 &1.00  \\  
%1/1024 &1.43e-05	 &2.00	 &1.58e-02	 &1.00	 &1.18e-04	 &1.00  &1.35e-03	 &1.00 &3.79e-03	 &1.00  \\  
\hline 
\multicolumn{11}{c}{ $\gamma=2$ } \\ \hline 
  1/32 & 1.45e-02	 & -- & 5.00e-01	 & -- & 3.82e-03	 & --  & 4.26e-02	 & -- & 1.36e-01	 & -- \\  
1/64 &3.75e-03	 &1.95	 &2.52e-01	 &0.99	 &1.90e-03	 &1.01  &2.14e-02	 &0.99 &7.02e-02	 &0.95  \\  
1/128 &9.50e-04	 &1.98	 &1.26e-01	 &1.00	 &9.41e-04	 &1.01  &1.07e-02	 &1.00 &3.56e-02	 &0.98  \\  
1/256 &2.39e-04	 &1.99	 &6.32e-02	 &1.00	 &4.68e-04	 &1.01  &5.37e-03	 &1.00 &1.79e-02	 &0.99  \\  
%1/512 &5.99e-05	 &2.00	 &3.16e-02	 &1.00	 &2.34e-04	 &1.00  &2.68e-03	 &1.00 &8.97e-03	 &1.00  \\  
%1/1024 &1.50e-05	 &2.00	 &1.58e-02	 &1.00	 &1.17e-04	 &1.00  &1.34e-03	 &1.00 &4.49e-03	 &1.00  \\  
\hline
\end{tabular}
\end{table*}

\subsection{Experiment 2}
This experiment is the so-called Gresho--vortex problem that has been studied in \cite{FLNNS, HJL} and references therein for the isentropic flow. Initially, a vortex of radius $R_0=0.2$ is prescribed at location $(x_0,y_0) =(0.5,0.5)$ with the velocity field given by 
\begin{equation*}
r(x,y,0) = 1 , \quad 
\vU(x,y,0) =\left(\begin{array}{l}
u_R(R)*(y-0.5)/R\\
u_R(R)*(0.5-x)/R
\end{array} \right) ,
\end{equation*}
where $R=\sqrt{(x-0.5)^2+ (y-0.5)^2}$ and the radial velocity of the vortex $u_r$ is given by
\begin{equation*}
u_R(R)= \sqrt{\gamma}
\left\{ 
\begin{array}{ll}
2R/R_0 & \text{ if } 0 \leq R < R_0/2, \\
2(1-R/R_0) & \text{ if } R_0/2 \leq R < R_0, \\
0 & \text{ if } R \geq R_0. \\
\end{array}
\right.
\end{equation*}
As there is no analytical solution to this problem, we take the  solution  to  scheme \eqref{scheme} computed on the very fine mesh for $h=1/1024$ as the reference solution. 
We present the error of numerical solutions with respect to the reference solution in Table~\ref{tab_exp2}  for different values $\gamma$. 
Similarly as in Experiment 1, we see the second order convergence rate for the relative energy and the first order convergence rate for the density, velocity and the gradient of velocity.

\begin{table*}[h!]\centering
\caption{Experiment 2: error norms at $T=0.1$ for different $\gamma$}\label{tab_exp2}
\begin{tabular}{c|c|c|c|c|c|c|c|c|c|c}\hline 
  $h$   &  $e_{\mathfrak{E}} $ &  EOC & $ e_{\nabla \vu}$ &  EOC  &  $e_{\vr}$ &  EOC  &  $e_{\uu}$ &  EOC &  $e_{p }$ &  EOC  \\ \hline 
\multicolumn{11}{c}{ $\gamma=1.4$} \\ \hline 
1/32 &6.98e-04	 &--	 &6.09e-02	 &--	 &1.25e-05	 &--  &3.25e-03	 &-- &1.91e-04	 &--  \\  
1/64 &2.05e-04	 &1.77	 &3.39e-02	 &0.84	 &4.00e-06	 &1.65  &1.86e-03	 &0.80 &6.04e-05	 &1.66  \\  
1/128 &8.35e-05	 &1.29	 &2.08e-02	 &0.71	 &1.62e-06	 &1.31  &1.12e-03	 &0.73 &2.45e-05	 &1.30  \\  
1/256 &1.76e-05	 &2.25	 &9.37e-03	 &1.15	 &6.38e-07	 &1.34  &5.11e-04	 &1.14 &9.62e-06	 &1.35  \\ 
%1/512 &2.38e-06	 &\cmag 2.89	 &3.38e-03	 &1.47	 &2.23e-07	 &1.52  &1.85e-04	 &1.46 &3.33e-06	 &1.53  \\  
\hline 
\multicolumn{11}{c}{ $\gamma=1.67$} \\ \hline 
1/32 &8.38e-04	 &--	 &6.67e-02	 &--	 &1.37e-05	 &--  &3.56e-03	 &-- &2.50e-04	 &-- \\  
1/64 &2.47e-04	 &1.76	 &3.65e-02	 &0.87	 &4.41e-06	 &1.64  &1.97e-03	 &0.85 &7.93e-05	 &1.66  \\  
1/128 &7.29e-05	 &1.76	 &1.94e-02	 &0.91	 &1.76e-06	 &1.33  &1.06e-03	 &0.90 &3.17e-05	 &1.32  \\  
1/256 &1.80e-05	 &2.02	 &9.47e-03	 &1.04	 &7.04e-07	 &1.32  &5.16e-04	 &1.03 &1.26e-05	 &1.33  \\
%1/512 &2.47e-06	 &\cmag 2.87	 &3.43e-03	 &1.47	 &2.45e-07	 &1.52  &1.88e-04	 &1.46 &4.36e-06	 &1.53  \\    
\hline 
\multicolumn{11}{c}{ $\gamma=2$} \\ \hline 
1/32 &7.43e-04	 &--	 &6.34e-02	 &--	 &1.50e-05	 &--  &3.46e-03	 &-- &3.29e-04	 &--  \\  
1/64 &3.05e-04	 &1.29	 &3.99e-02	 &0.67	 &4.89e-06	 &1.61  &2.13e-03	 &0.70 &1.06e-04	 &1.64  \\  
1/128 &7.62e-05	 &2.00	 &1.99e-02	 &1.01	 &1.94e-06	 &1.33  &1.08e-03	 &0.99 &4.20e-05	 &1.33  \\  
1/256 &1.91e-05	 &1.99	 &9.73e-03	 &1.03	 &7.81e-07	 &1.31  &5.30e-04	 &1.02 &1.68e-05	 &1.32  \\ 
%1/512 &2.42e-06	 &\cmag 2.98	 &3.39e-03	 &1.52	 &2.70e-07	 &1.53  &1.87e-04	 &1.51 &5.77e-06	 &1.54  \\    
\hline 
\end{tabular}
\end{table*}

\begin{remark}
On one hand we have proven the theoretical convergence rate of $1/2$ for the relative energy functional (see Theorem~\ref{thm_error_estimates}). 
On the other hand we have observed the first order convergence rate for the density and velocity (see Table~\ref{tab_exp1}~and~\ref{tab_exp2}), 
and the second order convergence rate for the relative energy functional, as it is a function of the density and velocity squared.
Even if the theoretical convergence rate is not optimal, we would like to emphasize that it is  \emph{unconditional}, meaning there is no assumption on the regularity nor boundedness of the numerical solution. Moreover, as far as we know, it is the best theoretical convergence rate proven in the literature. 
%Though it was claimed in \cite[Remark 3.4]{Gallouet_mixed} that the convergence rate can reach the value 1 when the dimension is $d=2$, we would like to point out that it was a typo due to the estimate of the $R_{4,1}$ term in \eqref{R41} (see a similar term in \cite[equation (6.17)]{Gallouet_mixed}).  
Nevertheless,  assuming the numerical solution is bounded would allow the convergence rate to reach the value $2$ in the case of relative energy ($1$ for the density and velocity). 
\end{remark}

\section*{Conclusion}\label{9}

We have studied a finite difference scheme on the staggered grid for the multi-dimensional compressible isentropic Navier--Stokes equations in a periodic domain  originally proposed in \cite{HS_MAC}. 
The solutions of the scheme were shown to exist while preserving the positivity of the discrete density. Employing the stability and consistency estimates we have shown in Theorem~\ref{thm_convergence} that the numerical solutions of the scheme \eqref{scheme}  \emph{unconditionally} converge to a strong solution  of the limit system \eqref{strong_ns}  on its lifespan. 
%We have employed a powerful tool, namely the weak--strong uniqueness principle in the class of  dissipative measure--valued (DMV) solution.
 Further, we have  derived  \emph{uniform}  convergence rate for the error between the finite difference approximation and the corresponding strong solution in terms of the relative energy functional. Finally, we have  presented two numerical experiments to support our theoretical results.   
To the best of our knowledge, this is the first rigorous result concerning convergence analysis of a  finite difference scheme for the compressible  Navier--Stokes equations in the multi-dimensional setting.

{}

%\bibliography{citace}
\bibliographystyle{plain}

\end{document}